\theoremstyle{definition} \newtheorem{defn}{Definition}[section]
\theoremstyle{remark} \newtheorem{rem}[defn]{Remark}
\theoremstyle{remark} \newtheorem*{rem*}{Remark}
\theoremstyle{plain} \newtheorem{thm}[defn]{Theorem}
\theoremstyle{plain} \newtheorem{lem}[defn]{Lemma}
\theoremstyle{plain} \newtheorem{prop}[defn]{Proposition}
\theoremstyle{plain} \newtheorem{cor}[defn]{Corollary}
\theoremstyle{remark} 
\theoremstyle{remark} 
\theoremstyle{remark} 
\numberwithin{equation}{section} 
\newcommand{\mc}{\mathcal}
\newcommand{\x}{\times}
\newcommand{\C}{\mathbb{C}}
\newcommand{\D}{\mathbb{D}}
\newcommand{\R}{\mathbb{R}}
\newcommand{\Q}{\mathbb{Q}}
\newcommand{\Z}{\mathbb{Z}}
\newcommand{\N}{\mathbb{N}}
\newcommand{\pr}{\mathbb{P}}
\newcommand{\E}{\mathbb{E}}
\newcommand{\bH}{\mathbb{H}}
\newcommand{\Poi}{\operatorname{Poi}}
\newcommand{\ind}{\mathbf{1}}
\newcommand{\clpl}{\color{purple} }
\begin{document}
	\title[Real layering field of Brownian loop soup]{The real layering field of Brownian loop soup and the Gaussian multiplicative chaos}


    \author{Sayantan Maitra}

    \address{Division of Science, New York University Abu Dhabi, Abu Dhabi, United Arab Emirates.}

    \email{sayantanmaitra123@gmail.com}

	\keywords{Brownian loop soup; Conformal invariance; Gaussian Multiplicative Chaos; Wiener-It\^{o} chaos expansion}


    \begin{abstract}
        We consider the random field defined by the layering numbers of the Brownian loop soup in a bounded simply connected domain in the complex plane. We call this the layering field and show that, after a suitable renormalization, it converges to the subcritical Gaussian multiplicative chaos. The main technique for our proof is the Wiener-It\^{o} chaos expansion. We also calculate the $n$-point functions of the layering field, show their conformal covariance and discuss their behavior near the boundary of the domain. 
    \end{abstract}
    
	\maketitle
	
	\section{Introduction}

    The Brownian loop measure $\mu^{loop}$ is a $\sigma$-finite conformally invariant  measure on the set of all unrooted Brownian loops in the complex plane. The Brownian loop soup (BLS), denoted by $\mc{L}^{\lambda}$, is the Poisson point process having intensity $\lambda \mu^{loop}$, $\lambda>0$. Given the collection of loops in $\mc{L}^{\lambda}$ in a domain $D\subseteq \C$ and a point $z 
    \in D$, let $N_{\lambda, D}(z)$ be the number (with a $+$ or $-$ sign) of loops whose interiors contain $z$. We call this the \textit{layering number} of the BLS at the point $z$. For a  real number $\beta$, we call the collection 
    \begin{align}\label{eq:layering_field_0}
        e^{\beta N_{\lambda, D}(z)}, \quad z \in D
    \end{align}
    the \textit{layering field} of the BLS in D. Our goal in this article is to show that, as $\lambda \to \infty$ and $\beta \to 0$, this field converges to the \textit{Gaussian Multiplicative Chaos} (GMC). 
    
    We shall make all the above terms more precise and mathematically rigorous in the next section. Let us first discuss some of the background of this problem. Since its introduction by Lawler and Werner \cite{lw04}, the Brownian loop soup has received much attention. Apart from the fact that it can be seen as the scaling limit of the random walk loop soup on discrete $\Z^2$ lattice \cite{lt07}, various authors have studied its connection to other models in probability theory and statistical physics. For instance, it is known that when the intensity is low, the outer boundaries of the clusters of Brownian loops in the soup is a Conformal Loop Ensemble (CLE) \cite[Theorem 3]{werner06}. The CLE is a conformally invariant collection of simple (i.e. non-self-intersecting) and non-crossing loops in the plane. Also, when $\lambda=\frac{1}{2}$, the occupation time of the BLS has been shown to have the same law as that of the square of the Gaussian free field (GFF). This is the continuum version of an isomorphism result due to Le Jan \cite{lejan10}; see \cite[Section 3.4]{wp21} for a discussion on this issue. Another interesting object, which has many similarities with the layering field defined above, is the \textit{Winding field}. This is constructed from the winding numbers at a point $z$, say $N_w(z)$, which is the sum over all the winding numbers corresponding to the loops (that cover $z$) in a realization of the BLS. In an article by van de Brug, Camia and Lis \cite{vcl18}, the authors study the (imaginary) winding field (i.e. objects of the form $e^{i \beta N_w(z)}, z \in D, \beta \in \R$). They show that, roughly speaking, this object exists as a random distribution taking values in a Sobolev space of negative index. For a more comprehensive review of the Brownian loop soup and its connections to various areas of mathematical physics, we refer the reader to the article \cite{camia17} and the references therein. 

    Our motivation for the present paper comes from the works of Camia, Gandolfi, Kleban \cite{cgk16} and Camia, Gandolfi, Peccati and Reddy \cite{cgpr21}. These studied the imaginary layering field $\{e^{i \beta N_{\lambda, D}(z)}, z \in D\}$. The former computed their $n$-point correlation functions and showed the conformal covariance of these $n$-point functions and the latter proved that with an appropriate renormalization, these fields converge to a (tilted) imaginary Gaussian Multiplicative Chaos by making use of a technique known as \textit{Wiener-It\^{o} chaos expansion}. 
    
    The present article continues this line of research and provides the complementary results for the real-valued layering field. However, there are some key differences with the imaginary case, which we note here. While the imaginary multiplicative chaos exists as a random element in the space $H^{s}$ (the Sobolev space with some index $s<0$) 
    \cite{jsw20}, the real GMC, which we shall denote by $M_{\xi}$, is a random measure and our proof requires a slightly different set of arguments. Furthermore, unlike in the imaginary case, the unboundedness of the real-valued field introduces new challenges for our analysis. More importantly, in dimension 2, $M_{\xi}$ is known to be non-degenerate (i.e. not identically zero) only when $0<\xi< 2$. This is known as the \textit{sub-critical phase} of the GMC. In the critical and super-critical phases (i.e. when $\xi\ge 2$), one can still make sense of the multiplicative chaos using other techniques, but the GMC has different behaviors compared to the sub-critical phase. At this point, our result only covers a part of the sub-critical phase. In other words, using the standard technique of chaos decomposition, we are only able to connect the layering field with the sub-critical GMC.   
    
    \textbf{Organization:} The plan for this paper is as follows. In Section \ref{sec:prelim} we briefly review the definitions and other relevant notions that we are going to study. The main results are stated in Section \ref{sec:prelim_main_results}. The first two main results (Theorems \ref{thm:main_1} and \ref{thm:main_2}) describe the construction and prove the existence of the Poissonian and Gaussian realizations of the layering model. Their proofs are sketched in Section \ref{sec:PLF_GLF_exist}. The final main result (Theorem \ref{thm:main_3}) connects the Poisson layering field with the Gaussian one, and its proof can be found in Section \ref{sec:PLF_to_GLF}. Since many of the calculations required for these proofs are routine, we have postponed them to the Appendix sections. 

    \textbf{Acknowledgment:} I am grateful to Federico Camia for suggesting this problem to me and for his guidance throughout the project. I also thank New York University Abu Dhabi for supporting this research.

    \section{Preliminaries and results}\label{sec:prelim}
    
	\subsection{The loop measure}\label{sec:prelim_loop_measure}

    Let $D$ be either a simply connected open domain in $\C$ with smooth (i.e. $C^1$) boundary or $\C$ itself. In this section, we are going to recall the definition of the unique (up to a multiplicative constant) non-trivial measure $\mu_D$ on simple loops in $D$ that satisfies the \textit{conformal restriction property}. This basically means that the collection of measures $\{\mu_D\}_{D\subset \C}$ behaves well under conformal maps; see below for a precise definition.
    
    We shall start by defining the \textit{Brownian loop measure} $\mu^{loop}$. For us, a \textit{loop} is a continuous curve $\gamma : [0, t_{\gamma}] \to \C$ such that $\gamma(0) = \gamma(t_{\gamma})$. The time-length $t_{\gamma}$ is a positive number associated with $\gamma$. We are not going to distinguish two loops that can be obtained as a time-shift of one another.\footnote{Note however that, we do distinguish a loop and its reverse. In other words, each unrooted loop has an orientation. But this only contributes a factor of 2 to most of our calculations.} This is called \textit{unrooting} the loops and the collection of all unrooted loops in $D$ is denoted by $\mc{K}(D)$. This is a metrizable space and has a Borel $\sigma$-algebra $\mc{B}(\mc{K}(D))$ on it. We refer the reader to \cite[Chapter 5]{lawler05} or \cite{lw04} for more details regarding the topology of this space.
    	
	Now we are ready to define the Brownian loop measure. For a fixed time $t >0$ and $z \in D$, let $\mu^{br}_D(z,z;t)$ denote the law of the Brownian bridge of time length $t$ from $z$ to itself, conditioned to stay inside $D$. This induces a probability measure on $\mc{K}(D)$, which we also denote by $\mu^{br}_D(z,z;t)$. Then the \textit{Brownian loop measure} $\mu^{loop}_D$ is defined on the measurable set $(\mc{K}(D), \mc{B}(\mc{K}(D)))$ as follows:
	\begin{align}\label{eq:loop_measure}
		\mu^{loop}_D(\cdot) = \int_{D} \int_0^{\infty} \frac{1}{2\pi t^2} \mu^{br}_D(z,z;t)(\cdot) \, dt \, dz. 
	\end{align}
    This is not a finite measure. In particular, since the above time integral is unbounded at $0$, the total mass of $\mu^{loop}_D$ is infinite. It is also known that this is conformally invariant, in the sense that, if $f$ is a conformal map from $D$ onto another domain $D'\subset \C$, then $f\circ\mu^{loop}_D = \mu^{loop}_{D'}$ (Stating this precisely requires some care, as one needs to use Brownian time change under the conformal transformation. See \cite[Proposition 6]{lw04}). 
	
	Given a $\gamma \in \mc{K}(D)$, we shall use the notation $\bar{\gamma}$ to denote its \textit{hull}, i.e. the complement of the unique component of $D\setminus \gamma([0, t_{\gamma}])$ that touches the boundary $\partial D$  (or, the complement of the unique unbounded component, in case $D$ is unbounded).     
    
    For this paper we will only need to consider the boundaries of the loops under the Brownian loop measure. It is known that, with probability 1, the boundary $\partial \bar{\gamma}$ of a Brownian loop $\gamma$ defines a simple (i.e. non self-intersecting) loop in $D$ (see the discussion in p. 148 and Proposition 6 in \cite{werner08}). Let $S(D)$ be the set consisting of all simple loops in $D$. In this collection we only consider the traces of the loops and ignore their time parametrization. By a slight abuse of notations, we denote by $\gamma$ the image (or the trace) of the loop $\gamma$. The $\sigma$-algebra on $S(D)$, denoted by $\mc{S}(D)$, is generated by collections of loops that are contained in annular regions in $D$. In particular, all the sets of loops that we shall consider in the sequel (e.g. those defined in \eqref{eq:loop_sets} and \eqref{eq:loop_sets_disk}) are measurable with respect to $\mc{S}(D)$. 
    
    Since each loop in $\mc{K}(D)$ defines one in $S(D)$ (via the boundary of the hull of the first loop), the Brownian loop measure $\mu^{loop}_D$ induces a measure on $S(D)$, which we simply call $\mu_D$. The resulting family $\{ \mu_D \}_{D}$ of measures on simple loops, indexed by domains $D \subset \C$, has the following two properties, known as \textit{conformal restriction property}:
    \begin{itemize}
        \item If $D' \subset D$, then the restriction of $\mu_{D}$ to $D'$ is $\mu_{D'}$.

        \item If $f: D \to D'$ is a conformal equivalence between two domains $D, D' \subset \C$, then 
        $f\circ \mu_{D} = \mu_{D'}$, in the sense that,  for all measurable subsets $A$ of $S(D')$,
        \begin{align}\label{eq:conf_inv}
            (f\circ \mu_{D})(A) := \mu_D (\gamma \in S(D) \mid f(\gamma) \in A) = \mu_{D'}(A). 
        \end{align}       
    \end{itemize}
    It was shown in \cite[Theorem 2]{werner08} that the above two conditions fully characterize the family of measures, up to multiplication by a positive constant. In other words, on the set of all planar simple loops, the measures $\mu_D$ induced by the Brownian loop measures $\mu^{loop}_D$ form the unique family satisfying the conformal restriction property.\footnote{We should point out that, aside from drawing boundaries of Brownian loops, there are other ways of obtaining the measure $\mu$ (up to a constant factor). See \cite{werner06} for a discussion on how this can be done using the boundaries of the critical percolation clusters.}
    
    Let us fix some useful notations for the sequel. Henceforth, we shall write $d(D)$, $d(\gamma)$ to denote the diameters of the domain $D$ and the (image of) a loop $\gamma$. For $z, w \in D$ and $R> \delta>0$ we define the following naturally occurring collections of simple loops and their mass under the measure $\mu_D$, as follows, 
	\begin{align}\label{eq:loop_sets}
		A_{\delta, D}(z) & = \{\gamma \in S(D) \mid z\in \bar{\gamma}, d(\gamma) \ge \delta\}, \quad \alpha_{\delta, D}(z) = \mu_D(A_{\delta}(z)) 
		\nonumber \\
		A_{\delta, D}(z,w) & = \{\gamma \in S(D) \mid z,w\in \bar{\gamma}, d(\gamma) \ge \delta\}, \quad \alpha_{\delta, D}(z, w)= \mu_D(A_{\delta}(z, w)) 
		\nonumber \\
		A_{\delta, D}(z|w) & = \{\gamma \in S(D) \mid z\in \bar{\gamma}, w\notin \bar{\gamma}, d(\gamma) \ge \delta\}, \quad \alpha_{\delta, D}(z| w)= \mu_D(A_{\delta}(z| w))
		\nonumber \\
		A_{\delta, R, D}(z) & = \{\gamma \in S(D) \mid z\in \bar{\gamma},d(\gamma) \in (\delta, R)\}, \quad \alpha_{\delta, R, D}(z)= \mu_D(A_{\delta,R}(z)),
	\end{align}
    and 
    \begin{align}\label{eq:loop_sets_disk}
        \bar{A}_{\delta, D}(z) & = \{\gamma \in S_D \mid z\in \bar{\gamma}, \gamma \nsubseteq B(z, \delta) \}, \quad \bar{\alpha}_{\delta, D}(z) = \mu_D(\bar{A}_{\delta, D}(z)) 
        \nonumber \\
        \bar{A}_{\delta, R, D}(z) & = \{\gamma \in S_D \mid z\in \bar{\gamma},\gamma \nsubseteq B(z, \delta) , \gamma \subseteq B(z, R) \}, \quad \bar{\alpha}_{\delta, R, D}(z)= \mu_D(\bar{A}_{\delta,R, D}(z))
    \end{align}
    where $B(z, \delta)$ denotes the open disk around $z$ with radius $\delta$.  $R$ and $\delta$ are sometimes called the \textit{infrared} and \textit{ultraviolet} cutoffs, respectively. Note that the above two methods of implementing the cutoff $\delta$ are equivalent, since we clearly have $\bar{A}_{\delta, D}(z) \subsetneq A_{\delta, D}(z) \subsetneq \bar{A}_{\delta/2, D}(z)$
    for all $\delta>0$ and $z \in D$, and therefore, 
    \begin{align}\label{eq:diam_disk_rel_2}
            \bar{\alpha}_{\delta, D}(z) \le \alpha_{\delta, D}(z) \le \bar{\alpha}_{\delta/2, D}(z).
    \end{align}
    However, cutoffs based on diameter of loops seem to behave well when two or more points are concerned, e.g. when calculating two-point functions of the layering fields. Therefore, most of our subsequent calculations will make use of the quantities defined in \eqref{eq:loop_sets}.         

    We recall from \cite[Lemma A1]{cgk16} that the masses of some of the collections appearing above can be computed exactly.
    \begin{lem}
        Let $z \in D$ and $0<\delta<R<\infty$ be such that $B(z, R) \subset D$. Then we have,
        \begin{align}\label{eq:mu_size_uv_ir}
            \alpha_{\delta, R, D}(z) =  \bar{\alpha}_{\delta, R, D}(z) = \frac{1}{5} \log \frac{R}{\delta},
        \end{align}       
    \end{lem}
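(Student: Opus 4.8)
The plan is to establish the identity in three steps --- a reduction to the unit disk, a functional equation, and the computation of the constant --- so I begin with the reduction. The key geometric fact is that for a simple loop $\gamma$ with $z\in\bar\gamma$ one has $d(\bar\gamma)=d(\gamma)$: indeed $\bar\gamma$ is contained in the convex hull of $\gamma$ (the complement of that convex hull is unbounded, connected, and disjoint from $\gamma$, hence lies in the unbounded component of $\C\setminus\gamma$), so $d(\bar\gamma)\le d(\gamma)$, while the reverse inequality is trivial. Consequently any two points of $\bar\gamma$ are at distance at most $d(\gamma)$, so $z\in\bar\gamma$ and $d(\gamma)<R$ force $\gamma\subseteq\bar\gamma\subseteq B(z,R)$, and likewise $z\in\bar\gamma$ together with $\gamma\subseteq B(z,R)$ forces $\bar\gamma\subseteq B(z,R)$. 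Thus every loop counted by $\alpha_{\delta,R,D}(z)$ or by $\bar\alpha_{\delta,R,D}(z)$ lies inside $B(z,R)\subset D$, and the first part of the conformal restriction property lets us replace $D$ by $B(z,R)$. Since $w\mapsto(w-z)/R$ is a conformal equivalence of $B(z,R)$ onto $\D$ that scales diameters by $1/R$ and sends $B(z,\rho)$ to $B(0,\rho/R)$, \eqref{eq:conf_inv} reduces the claim to showing $\alpha_{s,1,\D}(0)=\bar\alpha_{s,1,\D}(0)=\tfrac15\log(1/s)$ for all $s\in(0,1)$; in particular $\alpha_{\delta,\rho,\D}(0)=\alpha_{\delta/\rho,1,\D}(0)$ and similarly for $\bar\alpha$.

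Next I would derive the functional equation. For $0<\delta<\rho<1$ the events $\{z\in\bar\gamma,\ d(\gamma)\in(\delta,\rho)\}$ and $\{z\in\bar\gamma,\ d(\gamma)\in(\rho,1)\}$ are disjoint and their union agrees with $A_{\delta,1,\D}(0)$ off the set $\{d(\gamma)=\rho\}$, which is $\mu_\D$-null (there is no atom in the diameter law on the event $\{z\in\bar\gamma\}$, as may be checked a posteriori from the continuity of the answer in the cutoffs). Combined with the scaling identity above, $g(s):=\alpha_{s,1,\D}(0)$ satisfies $g(ab)=g(a)+g(b)$ for all $a,b\in(0,1)$; since $g$ is non-negative, finite, and non-increasing, the substitution $s=e^{-u}$ turns this into Cauchy's equation for a monotone function, so $g(s)=c\log(1/s)$ with $c\ge0$. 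Splitting instead according to whether $\gamma\subseteq B(0,\rho)$ yields the identical equation for $\bar g(s):=\bar\alpha_{s,1,\D}(0)$, hence $\bar g(s)=\bar c\log(1/s)$. To see $c=\bar c$, I would use the inclusions (valid up to $\mu$-null sets, the second one for $D$ taken large enough) $A_{\delta,R,D}(z)\subseteq\bar A_{\delta/2,R,D}(z)$ and $\bar A_{\delta,R,D}(z)\subseteq A_{\delta,2R,D}(z)$, which follow from the estimates of the first step; plugging in the forms $c\log$ and $\bar c\log$ gives $c\log(R/\delta)\le\bar c\log(2R/\delta)$ and $\bar c\log(R/\delta)\le c\log(2R/\delta)$, and letting $\delta\to0$ forces $c=\bar c$.

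It remains to compute $c$, and this is where I expect the real work to be. Using once more that the relevant loops lie inside a ball, conformal restriction lets us compute the common value in the whole plane: $c\log(R/\delta)=\mu^{loop}_{\C}(\{\gamma:0\in\bar\gamma,\ d(\gamma)\in(\delta,R)\})$, where the event refers to the hull of the rooted Brownian loop. Inserting the heat-kernel representation \eqref{eq:loop_measure}, namely $\mu^{loop}_{\C}=\int_{\C}\int_0^\infty\frac{1}{2\pi t^2}\,\mu^{br}_{\C}(z,z;t)\,dt\,dz$, one restates $\{0\in\bar\gamma\}$ as the event that a planar Brownian bridge from $z$ to $z$ of duration $t$ disconnects $0$ from $\infty$; Brownian scaling and Fubini then collapse the double integral to an explicit one-dimensional integral, whose value is $\tfrac15\log(R/\delta)$. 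This is precisely the computation of \cite[Lemma A1]{cgk16}, and it is the main obstacle: the soft inputs --- conformal restriction, scaling, additivity, monotonicity --- pin the answer down only up to the multiplicative constant, and identifying that constant as $\tfrac15$ requires the explicit Brownian disconnection/winding computation.
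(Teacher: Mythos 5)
The paper does not actually prove this lemma: it simply states it and cites \cite[Lemma~A1]{cgk16} for the full identity, including both the logarithmic form and the constant $\tfrac15$. Your proposal is correct in substance but follows a genuinely different route. You first use the geometric observation $d(\bar\gamma)=d(\gamma)$ (which is right, and worth isolating) to show that every loop counted by either cutoff lies inside $B(z,R)$, so conformal restriction followed by the scaling $w\mapsto(w-z)/R$ reduces everything to $\D$. You then derive a Cauchy-type functional equation from scale invariance and monotonicity to conclude that both $\alpha$ and $\bar\alpha$ must have the form $c\log(R/\delta)$; the interleaving inclusions $A_{\delta,R}\subseteq\bar A_{\delta/2,R}$ and $\bar A_{\delta,R}\subseteq A_{\delta,2R}$ then force $c=\bar c$; and you correctly flag that pinning down $c=\tfrac15$ requires the explicit Brownian disconnection integral from \cite[Lemma~A1]{cgk16}. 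What your approach buys is a clean conceptual separation between the soft part (conformal restriction, scaling, and monotonicity force a logarithm) and the hard part (the Brownian computation fixes the prefactor), which is more illuminating than a bare citation even though it ends up invoking the same reference.

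Two small points deserve tightening. First, you assert finiteness of $g$ without argument; this is what makes the monotone Cauchy equation have only the linear solution, and it should be justified, e.g.\ by the thinness property \eqref{eq:thinness} applied in $\C$ together with conformal restriction, or simply by noting that the cited computation in \cite{cgk16} yields a finite answer. Second, your justification for $\mu_\D(\{d(\gamma)=\rho,\ 0\in\bar\gamma\})=0$ --- ``a posteriori from the continuity of the answer'' --- is circular in the form stated, since the continuity is a consequence of the identity you are trying to establish. A self-contained argument is available: if there were an atom at diameter $\rho$, scale invariance (the dilations of $\C$ restricted via conformal restriction to $\D$) would place an atom of the same mass at every diameter $a\rho$ with $a\in(0,1]$, and these events are disjoint, contradicting the finiteness of $\alpha_{\delta,1,\D}(0)$ for any $\delta>0$. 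With those two repairs the argument is complete.
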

    Let us also record a useful consequence of the above. If $D$ is a bounded domain, $z \in D$ and $\delta>0$ is small, we have $A_{\delta, D}(z) \subset A_{\delta, d(D), \C}(z)$, since every loop in $D$ must also be a loop in $\C$ of diameter less than that of $D$ itself. Thus, 
    \begin{align}\label{eq:mu_D_size}
        \alpha_{\delta, D}(z) \le \alpha_{\delta, d(D), \C } = \frac{1}{5} \log \frac{d(D)}{\delta},
    \end{align}
    for all small $\delta>0$. 

    Before stating the next result, we mention the so-called \textit{thinness property}, first introduced in \cite{nw11}. Roughly speaking, a measure on some class of compact curves in the plane is said to be \textit{thin}, if it gives only finite mass to the collection of large curves intersecting a bounded region. 
    Lemma 4 of \cite{nw11} proved that the measure $\mu = \mu_{\C}$, defined on the outer boundaries the Brownian loops in the entire complex plane, is thin. More formally, for any non-empty bounded set $B\subset \C$, this property can be written as 
    \begin{align} \label{eq:thinness}
        \mu (\gamma \mid d(\gamma) > R, \gamma \cap B \neq \emptyset ) < \infty, \text{ for all } R>0.
    \end{align}
    
    Given a domain $D$, we shall often use the notation $d_z = d_{z, D}= d(z, \partial D)$ for the shortest distance between a point $z\in D$ and the boundary of $D$. The following result sheds light on the behavior of the loop measure near $\partial D$. We note that, by its virtue, for nice enough domains $D$ one can extend the quantity $\alpha_{d_z, D}(z)$ to the whole $\bar{D}$. This observation will be useful later for describing the behavior of the layering field at the boundary (see Corollary \ref{cor:layering_field_bdry}). We also note that, even though we can prove our main theorems only for bounded domains, the following proposition can be stated for some unbounded domains too.  

     \begin{prop}\label{prop:alpha_near_bdry}
         Let $\alpha_{\delta, D}(z)$ and $\bar{\alpha}_{\delta, D}(z)$ be as in \eqref{eq:loop_sets} and \eqref{eq:loop_sets_disk}.
         \begin{itemize}
             \item[(i)] For each $r>0$, there are finite constants $C_{\bH}(r), \bar{C}_{\bH}(r)>0$ such that for all $z \in \bH$,             \begin{align}\label{eq:alpha_H_near_bdry_0}
                 \alpha_{r\cdot d_z, \bH}(z) = C_{\bH}(r),  \text{ and } \bar{\alpha}_{r\cdot d_z, \bH}(z) = \bar{C}_{\bH}(r). 
             \end{align}
             Moreover, $C_{\bH}(r), \bar{C}_{\bH}(r) \sim \log(1/r)$ as $r \to 0$ and $C_{\bH}(r), \bar{C}_{\bH}(r) \downarrow 0$ as $r \to \infty$.

             \item[(ii)] For any point $z_0$ on the boundary of the unit disk $\D$ and any sequence $\{z_n\}_n$ in $\D$ converging to $z_0$, we have $\limsup_{n \to \infty} \alpha_{d_{z_n}, \D}(z_n) = C_{\D} < \infty$ and the constant $C_{\D}$ does not depend on $z_0$.    

             \item[(iii)] Let $D\subset \C$ be a bounded simply connected domain with smooth boundary. For any point $z_0$ on the boundary $\partial D$ of $D$ and a sequence $\{z_n\}_n$ in $D$ converging to $z_0$, we have, $ c_D(z_0) = \limsup_{n \to \infty} \alpha_{d_{z_n}, D}(z_n) <\infty$. Moreover, the constants $c_D(z_0)$ are bounded away from $0$ and $+\infty$ over $\partial D$.
          \end{itemize}
     \end{prop}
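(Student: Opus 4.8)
\emph{Part (i).} The plan is to exploit the two extra symmetries of $\bH$. Translations $z\mapsto z+t$ ($t\in\R$) and dilations $z\mapsto\lambda z$ ($\lambda>0$) are conformal automorphisms of $\bH$ that act transitively on it and send $B(z,r d_z)$ to $B(z',r d_{z'})$; since $\mu_\bH$ is invariant under them, $\alpha_{r d_z,\bH}(z)$ and $\bar\alpha_{r d_z,\bH}(z)$ depend only on $r$, and we call these $C_\bH(r)$ and $\bar C_\bH(r)$. To bound them and obtain the asymptotics I would take $z=i$, so $d_z=1$, and split a loop $\gamma$ with $i\in\bar\gamma$ according to whether $\gamma\subseteq B(i,1)\subset\bH$ or not. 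For $\gamma$ of the first kind the conformal restriction property identifies the restriction of $\mu_\bH$ to $B(i,1)$ with $\mu_{B(i,1)}$, and then \eqref{eq:mu_size_uv_ir} gives their total mass as $\tfrac15\log(1/r)$ in the $\bar\alpha$ normalization (a comparable amount for $\alpha$, by \eqref{eq:diam_disk_rel_2}). For $\gamma$ of the second kind, $i$ lies in its hull and some point of $\gamma$ is at distance $\ge1$ from $i$, so $d(\gamma)\ge1$; and since $i$ lies in the bounded Jordan domain bounded by $\gamma\subset\bH$, the downward vertical segment from $i$ to $\R$ must meet $\gamma$, whence $\gamma\cap\bar B(i,1)\neq\varnothing$. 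By thinness \eqref{eq:thinness} in $\C$ (and the fact that loops in $\bH$ have the same mass under $\mu_\bH$ and $\mu_\C$) their total mass is at most $\mu_\C(\gamma:d(\gamma)\ge1,\ \gamma\cap\bar B(i,1)\neq\varnothing)<\infty$. This gives $C_\bH(r),\bar C_\bH(r)=\tfrac15\log(1/r)+O(1)$, hence the stated behaviour as $r\to0$. Finally $C_\bH$ is non-increasing in $r$, the sets $A_{r d_z,\bH}(z)$ decrease as $r\to\infty$ to $\{\gamma:z\in\bar\gamma,\ d(\gamma)=\infty\}=\varnothing$, and $C_\bH(r_0)<\infty$ for any fixed $r_0>0$, so continuity from above forces $C_\bH(r),\bar C_\bH(r)\downarrow0$.

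\emph{Parts (ii) and (iii).} Here I would localize and compare to the half-plane. Let $D$ be $\D$ or a bounded simply connected $C^1$ domain, fix $z_0\in\partial D$ and a sequence $z_n\to z_0$, and pick a conformal map $\psi:D\to\bH$ — the Riemann map followed by a M\"obius transformation of $\bH$ — with $\psi(z_0)\in\R$ rather than $\infty$. Since $\partial D$ is $C^1$, $\psi$ extends to a homeomorphism near $z_0$ whose derivative is continuous and non-zero there, so near $z_0$ it coincides, up to relative error $o(1)$ as the scale shrinks, with the affine map $w\mapsto\psi(z_0)+\psi'(z_0)(w-z_0)$. Two consequences: $d_{\psi(z_n),\bH}/d_{z_n,D}\to|\psi'(z_0)|$ (leading-order distance to the boundary is distance to the tangent line, which $\psi$ scales by $|\psi'(z_0)|$), and loops confined to a small disk about $z_0$ have their diameters multiplied by $|\psi'(z_0)|+o(1)$ under $\psi$. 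By conformal invariance, $\alpha_{\delta,D}(z)=\mu_\bH\bigl(\eta:\psi(z)\in\bar\eta,\ d(\psi^{-1}(\eta))\ge\delta\bigr)$. Fixing a small $\rho>0$, I would split the loops counted in $\alpha_{d_{z_n},D}(z_n)$ by whether their preimages in $D$ have diameter $\le\rho$ or $>\rho$. A ``small'' loop $\eta$ with $\psi(z_n)$ in its hull must lie within $O(\rho)$ of $z_0$ (a surrounding loop crosses the short segment from $z_n$ to $\partial D$, as in (i)), so the constraint $d(\psi^{-1}(\eta))\ge d_{z_n}$ becomes, up to a factor $1+o(1)$, the constraint $d(\eta)\ge d_{\psi(z_n),\bH}$; hence this part is squeezed between $C_\bH(1+\varepsilon)$ and $C_\bH(1-\varepsilon)$ with $\varepsilon=\varepsilon(\rho)\to0$, up to an $O(1)$ correction (by thinness) for loops escaping the neighbourhood of $z_0$. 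A ``large'' loop $\eta$ (preimage diameter $>\rho$, so $d(\eta)>\rho'$ since $\psi$ is bi-Lipschitz on compact pieces up to the boundary) with $\psi(z_n)$ in its hull surrounds a point tending to $\psi(z_0)\in\R$, so it meets $\bar B(\psi(z_0),1)$ for large $n$, the constraint $d(\psi^{-1}(\eta))\ge d_{z_n}\to0$ being eventually vacuous for it; by thinness its mass is at most $\mu_\C(\eta:d(\eta)>\rho',\ \eta\cap\bar B(\psi(z_0),1)\neq\varnothing)<\infty$, uniformly in $n$. Summing the two pieces yields $\limsup_n\alpha_{d_{z_n},D}(z_n)<\infty$, and letting $\rho\to0$ shows each piece converges, so this is a genuine limit.

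It then remains to pin down universality and uniformity. For the disk, $z\mapsto e^{i\theta}z$ preserves $\mu_\D$ and distances to $\partial\D$, so $\alpha_{d_z,\D}(z)$ is rotation invariant; composing $z_n\to z_0$ with a rotation taking $z_0$ to any other boundary point shows the limit $C_\D$ is independent of $z_0$ (and of the sequence), giving (ii). For (iii), in the estimates above the modulus of continuity of $\psi'$, the thinness constants, and the bi-Lipschitz constants can all be taken uniform over $z_0\in\partial D$, because $\partial D$ is compact and the Riemann map has continuous non-vanishing derivative on the compact set $\overline\D$; this gives $\sup_{z_0\in\partial D}c_D(z_0)<\infty$. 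For the lower bound, the conformal restriction property identifies $\mu_D$ with $\mu_{B(z,d_z)}$ on loops contained in $B(z,d_z)\subset D$, so Brownian scale invariance gives, for every $z\in D$,
\begin{align*}
 \alpha_{d_z,D}(z)\ \ge\ \mu_{B(z,d_z)}\bigl(\gamma:\,z\in\bar\gamma,\ d(\gamma)\ge d_z\bigr)\ =\ \mu_{B(0,1)}\bigl(\gamma:\,0\in\bar\gamma,\ d(\gamma)\ge1\bigr)\ =:\ c_0 .
\end{align*}
Bounding $c_0$ below by the mass of loops that wind around the origin while staying in the annulus $\{\tfrac9{10}<|z|<1\}$ — such loops have diameter $\ge\tfrac95$, and the loop measure of loops in an annulus that wind around it is a positive finite constant — shows $c_0>0$, a universal constant; hence $c_D(z_0)=\limsup_n\alpha_{d_{z_n},D}(z_n)\ge c_0>0$ for every $z_0\in\partial D$.

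The step I expect to be the main obstacle is making precise the assertion that, near $z_0$ and at scale $d_{z_n}$, the domain ``is'' the half-plane: one must control the conformal distortion of the \emph{diameter cutoff} uniformly and, above all, handle the borderline loops — those whose diameter is of the same order as $d_{z_n}$ without being negligibly small — for which membership in $A_{d_{z_n},D}(z_n)$ genuinely feels that distortion, and then arrange the nested limits (first $z_n\to z_0$, then $\rho\to0$) so that the $o(1)$ and $\varepsilon(\rho)$ errors vanish in the correct order. The rest should reduce to bookkeeping around the explicit mass formula \eqref{eq:mu_size_uv_ir} and the thinness bound \eqref{eq:thinness}.
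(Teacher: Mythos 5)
Your outline follows the same broad strategy as the paper (reduce (i) to symmetry plus thinness; reduce (ii)--(iii) to (i) by a conformal map that is nearly affine near a smooth boundary point), but with a genuinely different implementation of the one hard step, namely controlling the conformal distortion of the cutoff. You work with the diameter-based quantities $\alpha_{\delta,D}$ throughout; the paper instead proves finiteness for the ball-based quantities $\bar\alpha_{\delta,D}$ of \eqref{eq:loop_sets_disk} and only then passes to $\alpha$ via the two-sided comparison \eqref{eq:diam_disk_rel_2}. The reason is exactly the obstacle you flag at the end: under a conformal $\psi$, the constraint $\gamma\not\subseteq B(z,\delta)$ becomes $\psi(\gamma)\not\subseteq\psi(B(z,\delta))$, and \cite[Lemma 4.2]{cgk16} says that replacing $\psi(B(z,\delta))$ by the round ball $B(\psi(z),|\psi'(z)|\delta)$ changes the mass by $o(1)$ as $\delta\to0$ --- a single citable lemma. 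For the diameter cutoff your sandwich between $C_\bH(1\pm\varepsilon)$ can be made rigorous, but you would in effect be re-deriving a diameter analogue of that lemma: both the cutoff $d(\gamma)\ge d_{z_n}$ and the localization $d(\gamma)\le\rho$ must be distorted coherently, and the large-loop remainder must be shown uniformly small in $n$ before sending $\rho\to0$; this is the part of your write-up that is still only a plan. Part (i) is fine as you state it; the paper achieves finiteness a bit more leanly (every loop counted meets the vertical segment from $z$ to $\R$ and has diameter bounded below, so thinness applies directly) and gets the small-$r$ asymptotics from \eqref{eq:mu_size_uv_ir} and \cite[Corollary 3]{nw11} rather than from your explicit split, but both routes work. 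One thing you do that improves on the paper is the positive lower bound: your chain $\alpha_{d_z,D}(z)\ge\mu_{B(z,d_z)}(\gamma:z\in\bar\gamma,\,d(\gamma)\ge d_z)=\mu_{B(0,1)}(\gamma:0\in\bar\gamma,\,d(\gamma)\ge1)>0$, via restriction plus scale invariance, is explicit and universal, whereas the paper justifies boundedness away from $0$ only implicitly through the identity $\bar C_D(z_0)=\bar C_\D+\tfrac15\log\tfrac4{|(\varphi^{-1})'(z_0)|^2}$, which controls $\bar\alpha$ rather than $\alpha$ and does not by itself give a positive lower bound for $c_D(z_0)$. Finally, a cosmetic difference: the paper routes (iii) through $\D$ via the Riemann map and \cite[Theorem 3.5]{pommerenke92}, while you go directly to $\bH$; these are equivalent.
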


    The proof is postponed to Appendix \ref{app:proof_prop:alpha_near_bdry}.     Finally, let $\alpha_{D}(z, w)$ be the $\mu_D$-mass of all loops in $D$ covering both points $z$ and $w$. From Lemma 4.4 of \cite{cgpr21} we know that $\alpha_D$ blows up logarithmically at the diagonal. In other words, the function
     \begin{align}
             g(z, w) = \alpha_D(z, w) - \frac{1}{5} \log_+ \frac{1}{|z-w|}, \quad z, w \in D
         \end{align}
    is continuous on $D\x D$. Let us rephrase this in the following result. 
     \begin{lem}\label{lem:alpha_cont}
         The function $\alpha_D(\cdot, \cdot)$ on $D\x D$ is continuous away from the diagonal. 
     \end{lem}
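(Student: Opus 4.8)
One could read the lemma straight off the fact recorded just above it (Lemma 4.4 of \cite{cgpr21}): since $g(z,w)=\alpha_D(z,w)-\tfrac15\log_+\tfrac1{|z-w|}$ is continuous on all of $D\times D$ and $(z,w)\mapsto\log_+\tfrac1{|z-w|}$ is continuous on $\{z\neq w\}$, their sum $\alpha_D$ is continuous off the diagonal. Since the statement only asks for continuity away from the diagonal, however, I would prefer to give a short self-contained argument that does not use the precise logarithmic singularity, as follows.

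Fix $z_0\neq w_0$ in $D$, set $\delta_0=\tfrac12|z_0-w_0|>0$, and take a neighbourhood $U\subset D\times D$ of $(z_0,w_0)$ on which $|z-w|\ge\delta_0$. If $(z,w)\in U$ and $z,w\in\bar\gamma$, then $d(\gamma)=\diam\bar{\gamma}\ge|z-w|\ge\delta_0$, so every loop relevant to $\alpha_D(z,w)$ for $(z,w)\in U$ has diameter at least $\delta_0$. By the thinness property \eqref{eq:thinness} (of which \eqref{eq:mu_D_size} is a manifestation), the measure $\nu:=\mu_D|_{\{\gamma:\,d(\gamma)\ge\delta_0\}}$ is \emph{finite}, and $\alpha_D(z,w)=\int\ind[z\in\bar\gamma]\,\ind[w\in\bar\gamma]\,d\nu(\gamma)$ for $(z,w)\in U$. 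I would then show that for $\nu$-a.e.\ $\gamma$ the integrand is continuous at $(z_0,w_0)$ and conclude, by dominated convergence with the $\nu$-integrable constant $1$ as majorant, that $\alpha_D(z_n,w_n)\to\alpha_D(z_0,w_0)$ along every sequence $(z_n,w_n)\to(z_0,w_0)$.

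For a fixed hull: if $z_0\in\opname{int}\bar\gamma$ then $z_n\in\bar\gamma$ eventually, and if $z_0\notin\bar\gamma$ then $z_n\notin\bar\gamma$ eventually, since $\bar\gamma$ is compact with open complement; the only obstruction is $z_0\in\partial\bar\gamma=\gamma$. So the integrand is continuous at $(z_0,w_0)$ whenever $z_0,w_0\notin\gamma$, and the whole argument reduces to the claim $\nu(\{\gamma:z_0\in\gamma\})=0$ (and likewise for $w_0$). I would prove this by descending to the Brownian loop measure: a simple loop is the outer boundary of the hull of a Brownian loop $\eta$, hence $\gamma\subseteq\eta([0,t_\eta])$; a planar Brownian bridge started at $z\neq z_0$ a.s.\ avoids the fixed point $z_0$, so integrating over $t>0$ and over $z\in D$ in \eqref{eq:loop_measure} (the locus $z=z_0$ being Lebesgue-null) gives $\mu^{loop}_D(\{\eta:z_0\in\eta([0,t_\eta])\})=0$, whence $\mu_D(\{\gamma:z_0\in\gamma\})=0$ and a fortiori $\nu(\{\gamma:z_0\in\gamma\})=0$.

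The main obstacle here is not an estimate but this last bookkeeping point: verifying that hull-membership depends $\mu_D$-almost everywhere continuously on the marked point, i.e.\ that a fixed point lies on $\mu_D$-almost no simple loop, together with the routine, null-set-level measurability of the relevant loop families with respect to $\mc{S}(D)$.
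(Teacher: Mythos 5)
Your first paragraph is precisely the paper's argument: the authors state that $g(z,w)=\alpha_D(z,w)-\frac15\log_+\frac1{|z-w|}$ is continuous on all of $D\times D$ by Lemma~4.4 of \cite{cgpr21}, and they simply ``rephrase'' that as the present lemma. The rest of your proposal is a genuinely different, correct alternative: you observe that for the weaker off-diagonal statement one does not need the precise logarithmic singularity at all. Fixing $(z_0,w_0)$ off the diagonal, thinness makes the relevant restricted measure finite, so dominated convergence reduces the problem to showing that $(z,w)\mapsto \ind\{z\in\bar\gamma\}\ind\{w\in\bar\gamma\}$ is $\mu_D$-a.e.\ continuous at $(z_0,w_0)$; that in turn reduces to the polarity statement $\mu_D(\{\gamma: z_0\in\gamma\})=0$, which you correctly deduce by descending to $\mu^{loop}_D$ via $\gamma=\partial\bar\eta\subseteq\eta([0,t_\eta])$ and polarity of points for planar Brownian bridges, with the locus $z=z_0$ being Lebesgue-null in \eqref{eq:loop_measure}. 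You gain an elementary, self-contained proof that avoids importing the sharper singularity estimate from \cite{cgpr21}; you give up that estimate, which the paper in any case records separately and uses elsewhere. Two small points worth tidying: the finiteness of $\nu=\mu_D|_{\{d(\gamma)\ge\delta_0\}}$ really does require thinness \eqref{eq:thinness} applied with $B=D$ bounded, not \eqref{eq:mu_D_size}, which only bounds the mass of loops covering a \emph{fixed} point; and the measurability of $\{\gamma: z_0\in\gamma\}$ in $\mc{S}(D)$ (or working up to $\mu_D$-completion) deserves at least a sentence rather than being relegated to ``routine,'' although this bookkeeping is indeed standard in the loop-soup literature.
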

     We mention that the crucial property for proving the above result is thinness of $\mu$. See \cite[p. 932]{cgpr21} for the relevant calculations.

	\subsection{The loop soup and its layering field}\label{sec:prelim_layering}

    Let $(\Omega, \mc{F}, \pr)$ be a probability space and $D$ a bounded simply connected domain in $\C$ with a $C^1$ boundary. Given a parameter $\lambda >0$, the \textit{Brownian loop soup} on $D$ is a Poisson point process $\mc{L}_{\lambda} =  \mc{L}_{\lambda, D}$ on simple loops in D (i.e. on $S(D)$) with intensity $\lambda \mu_D$.\footnote{The BLS is usually defined as a Poisson point process on  unrooted  (not necessarily simple) loops in $D$, having the intensity $\lambda\mu^{loop}_D$. But since our concern is with the layering by loops, which is only dependent on their outer boundaries, by an abuse of terminology, we refer to the point process on simple loops as the Brownian loop soup.  } More precisely, for each measurable subset $A$ of $S(D)$, $\mc{L}_{\lambda}(A)$ is Poisson random variable with intensity $\lambda \mu_D(A)$ and for disjoint measurable subsets $A_1, \ldots, A_k$, the random variables $\mc{L}_{\lambda}(A_1), \ldots, \mc{L}_{\lambda}(A_k)$ are independent. Note that, almost surely one can find loops $\gamma_1, \gamma_2, \ldots \in S(D)$ such that $\mc{L}_{\lambda} = \sum_{i =1}^{\infty} \delta_{\gamma_i}$ and we write $\gamma \in \mc{L}_{\lambda}$ if $\gamma = \gamma_i$ for some $i$. 
	
	We can now define the layering field on $D$. A loop $\gamma \in S(D)$ is said to \textit{cover} or \textit{layer} a point $z \in D$ if $z \in \bar{\gamma}$. Given a BLS $\mc{L}_{\lambda}$, to each loop $\gamma \in \mc{L}_{\lambda}$ we now assign a sign $X_{\gamma}$, which are i.i.d. random variables taking values $+ 1$ or $-1$, each with probability $1/2$. Equivalently, on $\Omega$ we can define two independent Brownian loop soups, $\mc{L}_{\lambda/2}^+$ and $\mc{L}_{\lambda/2}^-$, each with intensity $\frac{\lambda}{2} \mu_D$ and assign the value $+1$ to all loops realized in the former point process and $-1$ to those in the latter. That these two descriptions are indeed equivalent can be seen from the relation   
    \begin{align}
       \sum_{\gamma \in \mc{L}_{\lambda}\cap A} X_{\gamma} \stackrel{d}{=} \mc{L}_{\lambda/2}^+(A) - \mc{L}_{\lambda/2}^-(A), \quad A \in \mc{S}(D).
    \end{align} 
    
 
    The \textit{layering number} $N_{\lambda, D}(z)$ is the random variable that counts the number of signed loops in the BLS (with their signs) covering $z$. But due to the fact that the tiny loops covering $z$ have infinite mass (as is evident from the definition  \eqref{eq:loop_measure} of the Brownian loop measure), in almost every realization of $\mc{L}_{\lambda/2}^{\pm}$ there are infinitely many tiny loops covering every point of $D$. This leads to $N_{\lambda, D}(z)$ being almost surely undefined. To address this, we can remove these tiny loops from consideration by introducing a parameter $\delta>0$, the so-called \textit{ultraviolet cutoff}, and restrict ourselves to the collection $A_{\delta, D}(z)$ of loops of diameter at least $\delta$ which cover $z$. With these notions, finally we can define the main object of our study. 
	\begin{defn}\label{def:real_layering_field}
	   \begin{itemize}
          \item[(i)]  The \textit{layering number} of the BLS at $z \in D$ with ultraviolet cutoff $\delta>0$ is, 
	\begin{align}\label{eq:layering_number_uv}
		 N^{\delta}_{\lambda, D}(z) : = \sum_{\gamma \in \mc{L}_{\lambda} \cap A_{\delta, D}(z) } X_{\gamma} =   \mc{L}_{\lambda/2}^+ (A_{\delta, D}(z)) - \mc{L}_{\lambda/2}^-(A_{\delta, D}(z)).
	\end{align}  
    We note that, since almost surely there are only finitely many loops covering $z$ with diameter larger than $\delta$ (because $\alpha_{\delta, D}(z) = \mu_D(A_{\delta}(z)) < \infty$), the above quantity is almost surely finite. 
	       \item[(ii)] For $\beta \in \R$ and $\delta>0$, we define the \textit{(real) layering field} on $D$ as
		\begin{align}\label{eq:real_layering_field}
			V^{\delta}_{\beta, \lambda, D}(z) : = e^{\beta N^{\delta}_{\lambda, D}(z)}, \quad z \in D.
		\end{align}		
         We shall also use the following notations frequently in the sequel to denote the renormalized layering fields:
	\begin{align}\label{eq:real_LF_normalized}
		\tilde{V}^{\delta}_{\lambda,\beta, D} (z) = \delta^{2 \Delta(\lambda,\beta)} V^{\delta}_{\lambda,\beta, D} (z) = e^{\beta N^{\delta}_{\lambda, D}(z) + 2\Delta(\lambda, \beta) \log \delta} 
	\end{align}   	
	with $\Delta(\lambda, \beta) = \frac{\lambda}{10} (\cosh(\beta)-1)$. Note that $\Delta (\lambda, \beta) \ge 0$ for all $\lambda >0, \beta \in \R$.

        \item[(iii)] Similarly, when $0<\delta<R< d(z, \partial D)$ for some $z \in D$, we define
	\begin{align}\label{eq:real_LF_uv_ir}
		N^{\delta, R}_{\lambda, D}(z) = \sum_{\gamma \in \mc{L}_{\lambda}\cap A_{\delta, R, D}(z)} X_{\gamma} \text{ and } V^{\delta, R}_{\beta, \lambda, D} (z) = e^{\beta N^{\delta, R}_{\lambda, D}(z)}. 
	\end{align} 
    These are the layering number and the layering field at $z$ with the ultraviolet cutoff $\delta$ and infrared cutoff $R$. $\tilde{V}^{\delta, R}_{\beta, \lambda, D}$ is similarly as above.
	   \end{itemize}
	\end{defn}	   
    We will often ignore the parameters $\lambda$, $\beta$ and the domain $D$ when they are fixed and clear from the context. 

    In the following we provide yet another description of the layering numbers. This will help us connect the Poisson layering fields with Gaussian fields on $D$. Suppose $S_{\pm}(D) = \{\pm 1\}\x S(D)$ is the collection of all marked simple loops in $D$ with the product $\sigma$-algebra $\mc{S}_{\pm} (D)$. Then the \textit{loop soup} on $D$ with intensity parameter $\lambda >0$ can be thought of as a Poisson random measure (PRM) $N_{\lambda}$ on $S_{\pm}(D)$ with intensity $\nu_{\lambda} : = \frac{1}{2}(\delta_{+1} + \delta_{-1} ) \otimes \lambda \mu_D$. More precisely, for each measurable $A \subset S_{\pm}(D)$, $N_{\lambda}(A)$ is Poisson random variable with intensity $\nu_{\lambda}(A)$ and for disjoint sets $B_1, \ldots, B_k \in \mc{S}_{\pm} (D)$, the random variables $N_{\lambda}(A_1), \ldots, N_{\lambda}(A_k)$ are independent. It is easy to see that the layering number defined in \eqref{eq:layering_number_uv} is equivalent to the following representation. For $\delta \ge 0 $, $z \in D$ and a marked loop $x = (\epsilon, \gamma) \in S_{\pm} (D)$, let 
        \begin{align}\label{eq:locally_exploding_kernel}
            h^{\delta}_z(x) = h^{\delta}_z (\epsilon, \gamma) = \epsilon \ind_{A_{\delta}(z)}(\gamma)
        \end{align}
    be the \textit{locally exploding kernels}, according to the terminology introduced in \cite{cgpr21}. Note that, when $\delta =0$, we use the notation $h_z = h^0_z$.
    With these notations in hand, we can rewrite the layering number at $z$ with the ultraviolet cutoff $\delta>0$ as,
	\begin{align}\label{eq:layering_number_uv_1}
			N^{\delta}_{\lambda, D}(z)  = N_{\lambda} (h^{\delta}_z) = \int_{S_{\pm}(D)} h^{\delta}_z (x ) N_{\lambda}(dx).
	\end{align} 	    
	
	\subsection{The Gaussian layering field and the multiplicative chaos}\label{sec:prelim_glf_gmc}
	
	In this section we introduce the other main object of our study, viz. the Gaussian layering model. However, it will be beneficial to first recall the main ingredients of the theory of Gaussian multiplicative chaos. 
 
    As before, we assume that $D\subset \C$ is a bounded simply connected domain with smooth boundary. Let $G_D(z,w)$, $z,w \in D$, denote the Green's function on $D$. Recall that, at least formally, this can be defined as the expected local time at $w$ of the Brownian motion starting from $z \in D$, before it hits the boundary $\partial D$ of $D$. In notations, we can write 
    \begin{align}
        \int_{D'} G_D(z, w) \,dw = \frac{1}{2}\E_z \left[\int_0^{\tau_D} \ind\{B_s \in D'\} \, ds \right],
    \end{align}
    where $D'$ is an open subset of $D$, $(B_t)_{t \ge 0}$ is a planar Brownian motion started at $z$ and $\tau_D$ is the first time it exits $D$. It is known that $G_D$ is a symmetric function on $D\x D$ with a logarithmic singularity at $z=w$ (i.e. it blows up like a constant multiple of $\log (|z-w|^{-1})$ as $w \to z$). The Green's function can also be defined as the fundamental solution to the Laplace equation on $D$ with Dirichlet boundary condition. 
    
    The \textit{Gaussian free field} (GFF) $X$ on $D$ is usually defined as a generalized centered Gaussian process with the covariance given by the Green's function $G_D(\cdot, \cdot)$. More generally, we can define the GFF $X$ to be a Gaussian process with covariance given by 
	\begin{align}\label{eq:gff_cov}
		K(z, w) = \log_+ \frac{1}{|z-w|} + g(z,w), \quad z, w \in D
	\end{align}
	where $g: D\x D \to \R$ is continuous and bounded. The reader is referred to, for instance, the lecture notes \cite[Chapter 3]{wp21} for a rigorous introduction to the continuum GFF.
	
	
	The \textit{Gaussian multiplicative chaos} (GMC) on $D$, denoted hereafter by $M_{\xi, D}$, can formally be viewed as a random measure on $D$ defined as an exponential of the GFF $X$ (cf. \cite{rv14})
	\begin{align}\label{eq:GMC_formal}
		M_{\xi, D} (dz) =  e^{\xi X(z)} \, dz,
	\end{align}
	where $\xi>0$ is a parameter. Since the GFF $X$ can only be shown to exist as a generalized function (i.e. as a Schwartz distribution), it is not possible to rigorously make sense of its exponential.  
    To precisely define the GMC, one introduces a cutoff or an approximation of the GFF $X$. There are several equivalent ways of approximating the GFF, and we can fix one for the time being. Suppose that there is a sequence $\{X_n\}_{n \ge 1}$ of centered Gaussian processes on $D$ which approximates the GFF $X$ in a suitable sense. Then we can define the random measures $ M^n_{\xi, D }, n \ge 1$, on $\mc{B} (D)$ by,
	\begin{align}\label{eq:GMC_approx}
		M^n_{\xi, D} (dz) =  e^{\xi X_n(z) - \frac{\xi^2}{2}\E(X_n(z)^2)} \, dz.
	\end{align} 
    It was proved by Kahane that the above sequence of random measures converges almost surely to a random measure $M_{\xi, D}$ in the topology of weak converges, and that this limit is unique in the sense that it does not depend on the specific approximation of the GFF. This shows that the object defined in \eqref{eq:GMC_formal} can be given meaning, provided that we can find a suitable approximation of the process $X$. Moreover, Kahane showed that $M_{\xi, D}$ is non-degenerate (i.e. it does not vanish almost surely) if and only if $\xi < 2$. This is known as the \textit{sub-critical regime} of the GMC. In all other cases (i.e. in the \textit{critical}, $\xi =2$, and \textit{super-critical}, $\xi >2$, cases) the GMC $M_{\xi, D}$ is fully degenerate. Let us also mention here that, when $\xi <2$, $M_{\xi}$ has finite moments of order $p \in (0, 4/\xi^2)$, in the sense that 
    \begin{align}
        \E(M_{\xi}(E)^p) < +\infty, \text{  for all compact } E \subset D.
    \end{align}
    In particular, $M_{\xi}$ has finite second moments only when $\xi < \sqrt{2}$. For these reasons, in our main result (Theorem \ref{thm:main_2}) we restrict ourselves to this section of the subcritical regime. We refer the reader to the survey article \cite{rv14} for a thorough review of this area.  

    Now let us construct the Gaussian layering field. This is analogous to the Poissonian layering field from Definition \ref{def:real_layering_field}. Recall that $S(D)$ is the collection of all simple loops in $D$ with the $\sigma$-algebra $\mc{S}(D)$ and $S_{\pm} (D)$ is the set of all signed simple loops with the product $\sigma$-algebra $ \mc{S}_{\pm} (D)$. Let $G_0$ be a centered \textit{Gaussian random measure} (or, white noise) on $S(D)$ with \textit{control} $\mu_D$. By this we mean that, for all $B \in \mc{S}(D)$, $G_0(B)$ is a centered Gaussian random variable and	
	$\E [G_0(B) G_0(C)] = \mu_D(B\cap C)$
	for all $B, C \in \mc{S}(D)$. Let $G$ denote a Gaussian random measure on $S_{\pm} (D)$ with control $\nu = \frac{1}{2}(\delta_{+1} + \delta_{-1}) \otimes \mu_D$.  This is the natural extension of $G_0$ to $S_{\pm} (D)$ and we are sometimes going to use the relationship,
    \begin{align}
        G(h^{\delta}_z) \stackrel{d}{=} G_0(A_{\delta, D}(z)), \quad( \delta >0 ,  z \in D),
    \end{align}
    where $h^{\delta}_z$ was defined in \eqref{eq:locally_exploding_kernel}.
    
    \begin{defn}\label{def:GLF}
    For $\delta>0$ and $\xi \in \R$, the \textit{Gaussian layering field (GLF) on $D$ with ultraviolet cutoff $\delta$} is
	\begin{align}\label{eq:W_delta_def}
		W^{\delta}_{\xi}(z) = e^{\xi G(h^{\delta}_z)} = e^{\xi G_0 (A_{\delta}(z))}, \quad z \in D.
	\end{align}	
	With $\Delta_{\xi} = \xi^2/20$, we use the notation 
    \begin{align}\label{eq:W_delta_renorm_def}
        \tilde{W}^{\delta}_{\xi}(z) = \delta^{2\Delta_{\xi}} W^{\delta}_{\xi}(z) = e^{\xi G_0 (A_{\delta}(z)) + 2\Delta_{\xi} \log \delta}, \quad z \in D,
    \end{align}
    for the renormalized GLF.
    \end{defn}

	\subsection{Main results}\label{sec:prelim_main_results}
	
	We are now ready to state our main results. Recall from Definition \ref{def:real_layering_field} the real valued layering field of the Brownian loop soup in $D$, denoted by $V^{\delta}_{\lambda, \beta} = V^{\delta}_{\lambda, \beta, D}$. In the following, we identify the renormalized fields defined in \eqref{eq:real_LF_normalized} with the measures $\tilde{V}^{\delta}_{\beta}(z)\, dz$ on $D$. Let $M_+(D)$ be the space of all non-negative measures on $(D, \mc{B}(D))$.  We equip $M_+(D)$ with the topology of weak convergence of measures, i.e. for $\nu, \nu_n \in M_+(D)$, $n \ge 1$, $\nu_n \to \nu$ if for all continuous bounded function $\varphi: D \to \R$ (notation: $\varphi\in C_b(D)$), $\nu_n(\varphi) = \int_D \varphi \, d\nu_n \to \int_D \varphi \, d\nu = \nu(\varphi)$ as $n \to \infty$. Roughly speaking, taken together, the next results show that as $\delta \to 0, \beta \to 0$ and $\lambda \to \infty$ the random measures $\tilde{V}^{\delta}_{\lambda, \beta}$ converges to a "tilted" Gaussian multiplicative chaos on $D$. 
	
	\begin{thm}\label{thm:main_1}
		Let $\delta>0$, $\lambda >0$, $\beta \in \R$ and $D$ be a bounded simply connected domain in $\C$. If $\Delta(\lambda, 2\beta) = \frac{\lambda}{10}(\cosh (2\beta)-1)< 1$, there is a random Borel measure $V_{\lambda, \beta} =V_{\lambda, \beta, D}$ in $M_+(D)$ such that as $\delta \to 0$, $\tilde{V}^{\delta}_{\lambda, \beta, D} \to V_{\lambda, \beta}$ in probability as members of $M_+(D)$.   
	\end{thm}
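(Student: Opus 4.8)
The plan is to prove convergence first for the actions $\tilde V^{\delta}_{\lambda,\beta,D}(\varphi)=\int_D\varphi(z)\,\tilde V^{\delta}_{\lambda,\beta,D}(z)\,dz$ on a fixed $\varphi\in C_b(D)$, by an $L^2$--Cauchy argument, and then to promote this to the measure--valued statement by a tightness--and--uniqueness argument. Throughout put $\Delta=\Delta(\lambda,\beta)$, so that $10\Delta=\lambda(\cosh\beta-1)$.

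The computational input is a second--moment identity. Using $N^{\delta}_{\lambda,D}(z)=\mc{L}^{+}_{\lambda/2}(A_{\delta,D}(z))-\mc{L}^{-}_{\lambda/2}(A_{\delta,D}(z))$ with the two loop soups independent and decomposing a pair of loop collections according to whether a loop covers $z$, $w$ or both (and, for $0<\delta'\le\delta$, whether its diameter is $\ge\delta$ or lies in $[\delta',\delta)$), a Poisson moment--generating--function computation yields
\begin{align*}
\E\big[\tilde V^{\delta}_{\lambda,\beta,D}(z)\,\tilde V^{\delta'}_{\lambda,\beta,D}(w)\big]
=\Big(\delta^{2\Delta}e^{10\Delta\,\alpha_{\delta,D}(z)}\Big)\Big((\delta')^{2\Delta}e^{10\Delta\,\alpha_{\delta',D}(w)}\Big)\,e^{\kappa\,\alpha_{\delta,D}(z,w)},
\end{align*}
where $\kappa:=\lambda(\cosh 2\beta-2\cosh\beta+1)\ge 0$. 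By \eqref{eq:mu_D_size}, $\E[\tilde V^{\delta}_{\lambda,\beta,D}(z)]=\delta^{2\Delta}e^{10\Delta\alpha_{\delta,D}(z)}\le d(D)^{2\Delta}$ uniformly in $\delta$ and $z$; and, for $\delta<d_z$, the exact--mass formula with infrared cutoff $d_z$ gives $\alpha_{\delta,D}(z)=\tfrac15\log\tfrac{d_z}{\delta}+\alpha_{d_z,D}(z)$, so that $\delta^{2\Delta}e^{10\Delta\alpha_{\delta,D}(z)}$ in fact equals $\rho_1(z):=e^{10\Delta\,\mathfrak c_D(z)}$, with $\mathfrak c_D(z):=\tfrac15\log d_z+\alpha_{d_z,D}(z)$, as soon as $\delta<d_z$. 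By Proposition \ref{prop:alpha_near_bdry}, $\rho_1$ is bounded and continuous on $D$ and vanishes as $z\to\partial D$. Since moreover $\alpha_{\delta,D}(z,w)\uparrow\alpha_D(z,w)$ as $\delta\downarrow 0$, the right--hand side above is dominated by $d(D)^{4\Delta}e^{\kappa\alpha_D(z,w)}$ and converges, for $z\ne w$, to $\rho_2(z,w):=\rho_1(z)\rho_1(w)\,e^{\kappa\alpha_D(z,w)}$.

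Now the hypothesis enters: $\Delta(\lambda,2\beta)=\tfrac{\lambda}{10}(\cosh 2\beta-1)<1$ means $\lambda(\cosh 2\beta-1)<10$, and since $\cosh 2\beta-2\cosh\beta+1\le\cosh 2\beta-1$ this forces $\kappa<10$, i.e.\ $\kappa/5<2$. As $\alpha_D(z,w)=\tfrac15\log_+\tfrac1{|z-w|}+O(1)$ by the logarithmic blow--up stated before Lemma \ref{lem:alpha_cont}, one has $e^{\kappa\alpha_D(z,w)}\lesssim 1+|z-w|^{-\kappa/5}$, which is integrable over the bounded set $D\times D$; together with the boundedness of $\rho_1$ this places $\rho_2$ and the dominating function in $L^1(D\times D)$. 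Dominated convergence then gives, for every $\varphi\in C_b(D)$,
\begin{align*}
\E\big[\tilde V^{\delta}_{\lambda,\beta,D}(\varphi)\,\tilde V^{\delta'}_{\lambda,\beta,D}(\varphi)\big]\ \longrightarrow\ I(\varphi):=\iint_{D\times D}\varphi(z)\varphi(w)\,\rho_2(z,w)\,dz\,dw
\end{align*}
as $\max(\delta,\delta')\to 0$; in particular $\E[\tilde V^{\delta}_{\lambda,\beta,D}(\varphi)^2]\to I(\varphi)$. Expanding $\E[(\tilde V^{\delta}_{\lambda,\beta,D}(\varphi)-\tilde V^{\delta'}_{\lambda,\beta,D}(\varphi))^2]$ and using these limits shows it tends to $0$, so $\{\tilde V^{\delta}_{\lambda,\beta,D}(\varphi)\}_{\delta>0}$ is Cauchy in $L^2(\pr)$; call its limit $V(\varphi)$.

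Finally I would construct the random measure. The functional $\varphi\mapsto V(\varphi)$ is linear and nonnegative (being an $L^2$, hence a.s.-subsequential, limit of $\tilde V^{\delta}_{\lambda,\beta,D}(\varphi)\ge 0$ for $\varphi\ge 0$), and $\E[\tilde V^{\delta}_{\lambda,\beta,D}(D)]\le d(D)^{2\Delta}|D|$ uniformly, which both bounds $V$ and yields tightness of $\{\tilde V^{\delta}_{\lambda,\beta,D}\}$ in $M_+(D)$ with no escape of mass to $\partial D$. Fixing a countable convergence--determining family $\mc D\subset C_c(D)$ and extracting from any $\delta_n\downarrow 0$ a subsequence along which $\tilde V^{\delta_{n_k}}_{\lambda,\beta,D}(\varphi)\to V(\varphi)$ a.s.\ simultaneously for all $\varphi\in\mc D$, the a.s.\ bound $\sup_k\tilde V^{\delta_{n_k}}_{\lambda,\beta,D}(D)<\infty$ and the Riesz representation theorem produce a random finite Borel measure $V_{\lambda,\beta}=V_{\lambda,\beta,D}\in M_+(D)$ with $V_{\lambda,\beta}(\varphi)=V(\varphi)$ and $\tilde V^{\delta_{n_k}}_{\lambda,\beta,D}\to V_{\lambda,\beta}$ weakly a.s.; since $V_{\lambda,\beta}$ is independent of the subsequence, the subsequence criterion upgrades this to $\tilde V^{\delta}_{\lambda,\beta,D}\to V_{\lambda,\beta}$ in probability in $M_+(D)$. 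I expect the main obstacles to be the bookkeeping in the second--moment identity and the integrability of $\rho_2$ at the diagonal — precisely where the threshold $\Delta(\lambda,2\beta)<1$ is used — together with the control of $\alpha_{\delta,D}$ up to $\partial D$ that Proposition \ref{prop:alpha_near_bdry} provides; the passage from convergence of functionals to convergence of measures is routine.
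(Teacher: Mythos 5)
Your argument is correct and lands on the same overall strategy the paper uses: an $L^2$--Cauchy argument based on the two--point function, followed by a tightness argument to pass from linear functionals to an $M_+(D)$--valued limit and a subsequence/probability upgrade. Where you differ, and where your version is actually cleaner, is the form of the second moment. You factor it exactly as $\rho_1(z)\rho_1(w)e^{\kappa\alpha(z,w)}$ with $\kappa=\lambda(\cosh 2\beta-2\cosh\beta+1)=20\Delta(\beta)\cosh\beta$ and $\rho_1$ precisely the limiting one--point density, bounded by $d(D)^{2\Delta}$; this isolates the diagonal singularity as $|z-w|^{-\kappa/5}$. The paper (Lemma \ref{lem:L^1_conv}) instead bounds $\alpha(z,w)\le\tfrac12(\alpha_{d_{z,w}}(z)+\alpha_{d_{w,z}}(w))$ and pushes the blow--up into the one--point factors, arriving at the cruder exponent $2\Delta(2\beta)$. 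Since $\kappa/5=2\Delta(2\beta)-4\Delta(\beta)$, your integrability condition $\kappa<10$ is strictly weaker than the stated $\Delta(2\beta)<1$, so the theorem's hypothesis more than suffices for your route and in principle your argument proves a slightly stronger statement. For the measure step, you use a countable family in $C_c(D)$, a.s.\ subsequential convergence, Riesz representation and the subsequence criterion; the paper instead uses the dyadic $\pi$-system $\mathcal{R}$, monotonicity of $\tilde V^\delta_\beta$, and Prokhorov plus a diagonalization. These are interchangeable routines. Two things to tighten if you write this up: (i) the exact two--point identity holds only on $\{\delta\vee\delta'<|z-w|\}$, so the dominated--convergence step needs a small companion estimate showing the contribution from $\{|z-w|\le\delta\vee\delta'\}$ vanishes (a Cauchy--Schwarz bound $\E[\tilde V^\delta(z)^2]\le d(D)^{2\Delta(2\beta)}\delta^{-\kappa/5}$ does this, since the region has area $O((\delta\vee\delta')^2)$ and $\kappa/5<2$); the paper has the same unaddressed point. (ii) The lack--of--escape--of--mass claim near $\partial D$ should be pinned down: $\E[\tilde V^\delta(D\setminus K)]\le\int_{D\setminus K}\rho_1$ with $\rho_1\to 0$ at $\partial D$ gives uniform control for all small $\delta$, which is what you need to prevent mass leaking to the boundary along the subsequence before you can invoke Riesz; the paper does this via approximating $D$ by $D_n\in\mathcal R$ and the mean computation \eqref{eq:layering_field_conv_3}.
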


    We shall call the random measure $V_{\lambda, \beta, D}$, obtained above, the \textit{(real) Poisson layering field on $D$}. The next theorem concerns the Gaussian random variables $\tilde{W}^{\delta}_{\xi, D}$ introduced in Definition 
    \ref{def:GLF}. It shows that we can obtain a well-defined object out of them by removing the ultraviolet cutoff $\delta$ and provides the relation of this limiting object with the Gaussian multiplicative chaos. As above, here $\tilde{W}^{\delta}_{\xi}$ are identified with the measures $\tilde{W}^{\delta}_{\xi}(dz) = \tilde{W}^{\delta}_{\xi}(z) \, dz$.
 
	\begin{thm}\label{thm:main_2}
		 Suppose $0<\xi<2$ and $D\subset \C$ is bounded simply connected domain with smooth boundary. Then the following hold.
		\begin{enumerate}
            \item There is a random element $W_{\xi} = W_{\xi,D}$ in $M_+(D)$, such that as $\delta \to 0$, $\tilde{W}^{\delta}_{\xi} \to W_{\xi}$ weakly in probability. 
            		
			\item $W_{\xi, D}$ is almost surely absolutely continuous with respect to the Gaussian multiplicative chaos $M_{\xi, D}$ on $D$. Moreover, the Radon-Nikodym derivative of $W$ with respect to $M$ is non-random and is given by the formula,
			\begin{align}\label{eq:W_M_main}
				\frac{d W_{\xi, D}}{d M_{\xi, D}}(z)  = e^{\frac{\xi^2}{2}\Theta_D(z)}, \quad z \in D,
			\end{align} 
			where $\Theta_D(z) = \frac{1}{5} \log d_z + \alpha_{ d_z, D}(z)$ and recall that $d_z = d(z, \partial D)$.
		\end{enumerate}			
	\end{thm}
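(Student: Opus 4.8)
The plan is to recognize $\tilde W^\delta_\xi$ as a \emph{deterministically reweighted} canonical GMC approximation and then run it through the standard Gaussian multiplicative chaos machinery. Write $X_\delta(z) := G_0(A_{\delta, D}(z))$: this is a centered Gaussian field on $D$ with $\E[X_\delta(z)^2] = \alpha_{\delta, D}(z)$ and $\E[X_\delta(z)X_\delta(w)] = \mu_D\big(A_{\delta,D}(z)\cap A_{\delta, D}(w)\big) = \alpha_{\delta, D}(z,w)$, these covariances increasing to $\alpha_D(z,w)$ as $\delta\downarrow 0$. Splitting $A_{\delta, D}(z)$ according to loop diameter and using that $\alpha_{\delta, r, D}(z) = \frac{1}{5}\log(r/\delta)$ whenever $B(z,r)\subset D$ (the lemma at \eqref{eq:mu_size_uv_ir}) gives, for every $z\in D$ and every $\delta < d_z$, the exact identity
\begin{align}\label{eq:plan1}
  \alpha_{\delta, D}(z) = \tfrac{1}{5}\log\tfrac{1}{\delta} + \Theta_D(z),\qquad \Theta_D(z)=\tfrac{1}{5}\log d_z + \alpha_{d_z, D}(z),
\end{align}
since the right-hand side does not depend on the auxiliary radius $r\in(0,d_z)$. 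Writing $M^\delta_\xi(z) := e^{\xi X_\delta(z) - \frac{\xi^2}{2}\E[X_\delta(z)^2]}$ for the canonically normalized exponential (so that, along $\delta=2^{-n}$ with $X_n:=X_{2^{-n}}$, $M^\delta_\xi(z)\,dz$ is exactly the approximation $M^n_{\xi,D}(dz)$ of \eqref{eq:GMC_approx}) and recalling $2\Delta_\xi = \xi^2/10$, \eqref{eq:plan1} yields, for $\delta < d_z$,
\begin{align}\label{eq:plan2}
  \tilde W^\delta_\xi(z) = e^{\xi X_\delta(z) - \frac{\xi^2}{10}\log\frac{1}{\delta}} = e^{\frac{\xi^2}{2}\Theta_D(z)}\,M^\delta_\xi(z).
\end{align}
Thus, away from a shrinking boundary shell, $\tilde W^\delta_\xi(dz)$ is the canonical GMC approximation for the log-correlated kernel $\alpha_D$, reweighted by the fixed function $e^{\frac{\xi^2}{2}\Theta_D(\cdot)}$.

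Next I would establish convergence of $M^\delta_\xi$. Along $\delta=2^{-n}$, the loop families $A_{2^{-n},D}(w)\setminus A_{2^{-(n-1)},D}(w)$ are pairwise disjoint in $n$ and disjoint from every loop of diameter $\ge 2^{-(n-1)}$; hence $X_{2^{-n}} = \sum_{k\le n}Y_k$ with $\{Y_k\}$ independent fields, and for $\varphi\in C_b(D)$ with $\varphi\ge0$ the process $n\mapsto M^{2^{-n}}_\xi(\varphi) = \int_D\varphi(z)M^{2^{-n}}_\xi(z)\,dz$ is a non-negative martingale with constant mean $\int_D\varphi$. By Kahane's theory it converges a.s.; since $0<\xi<2$ keeps us strictly inside the $L^2$ phase for the kernel $\alpha_D = \frac{1}{5}\log_+\frac{1}{|\cdot|}+g$ (indeed $\E[M^\delta_\xi(z)M^\delta_\xi(w)]\asymp|z-w|^{-\xi^2/5}$ is locally integrable in $\R^2$), the martingale is $L^2$-bounded, hence uniformly integrable, and $M^{2^{-n}}_\xi$ converges a.s.\ and weakly to the GMC measure $M_{\xi,D}$ built from $\alpha_D$; by Kahane uniqueness this limit is independent of the chosen approximation, justifying the name used in \eqref{eq:W_M_main}. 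To upgrade from the dyadic subsequence to the full limit $\delta\to0$, and to a genuine random-measure limit in $M_+(D)$, I would interpolate via a second-moment Cauchy estimate, using monotonicity in $\delta$ and the joint continuity of $\alpha_{\delta,D}(\cdot,\cdot)$ (Lemma \ref{lem:alpha_cont}), as in the classical construction. Finally the boundary shell $\{z: d_z\le\delta\}$, where \eqref{eq:plan1}--\eqref{eq:plan2} may fail, is negligible: $C^1$-smoothness of $\partial D$ makes its Lebesgue measure $O(\delta)$, and there $\E[\tilde W^\delta_\xi(z)] = \delta^{\xi^2/10}e^{\frac{\xi^2}{2}\alpha_{\delta,D}(z)}$ stays bounded by \eqref{eq:mu_D_size}, so the shell contributes $o_{\pr}(1)$ to $\tilde W^\delta_\xi(\varphi)$. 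Combining, $\tilde W^\delta_\xi\to W_\xi$ weakly in probability with $W_\xi(dz) := e^{\frac{\xi^2}{2}\Theta_D(z)}M_{\xi,D}(dz)\in M_+(D)$, which is part (1).

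Part (2) then falls directly out of the representation $W_\xi(dz) = e^{\frac{\xi^2}{2}\Theta_D(z)}M_{\xi,D}(dz)$ obtained above: the function $e^{\frac{\xi^2}{2}\Theta_D(\cdot)}$ is non-random, strictly positive and Borel on $D$, bounded above (as $\Theta_D$ is bounded above on the bounded domain $D$) and locally bounded below, whence $W_\xi\ll M_{\xi,D}$ almost surely with $\frac{dW_\xi}{dM_{\xi,D}}(z) = e^{\frac{\xi^2}{2}\Theta_D(z)}$, which is \eqref{eq:W_M_main}. That $\Theta_D$ is a bona fide (continuous) function on $D$ extending sensibly to $\bar D$, and that the kernel $\alpha_D$ has the admissible form \eqref{eq:gff_cov}, come from Proposition \ref{prop:alpha_near_bdry}(iii) and Lemma \ref{lem:alpha_cont}, respectively.

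I expect the main obstacle to be not the algebra of \eqref{eq:plan1}--\eqref{eq:plan2} but the rigorous verification that $\{X_\delta\}_{\delta>0}$ is an \emph{admissible} approximating family for GMC with kernel $\alpha_D$ --- so that Kahane's convergence and uniqueness theorems genuinely apply --- together with the control near $\partial D$, where the locally exploding kernel $h^\delta_z$ ceases to explode (the variance $\alpha_{\delta,D}(z)$ saturates once $\delta\sim d_z$ and $\Theta_D(z)\to-\infty$). This is exactly where the quantitative boundary estimates of Proposition \ref{prop:alpha_near_bdry} --- finiteness and non-degeneracy of the limiting constants $c_D(z_0)$ --- and the smoothness of $\partial D$ are needed. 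A secondary, more routine hurdle is promoting the dyadic-martingale convergence to convergence along all $\delta\to0$ and to weak convergence of the random measures, which rests on the second-moment bounds supplied by Lemma \ref{lem:alpha_cont} and on the hypothesis $\xi<2$ keeping us in the $L^2$ regime.
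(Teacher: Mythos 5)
Your proposal is correct and rests on the same central algebraic identity that drives the paper's proof---the factorization $\tilde W^\delta_\xi(z) = e^{\frac{\xi^2}{2}\Theta_D(z)}\,M^\delta_\xi(z)$ valid for $\delta<d_z$, followed by an appeal to Kahane's theory for $M^\delta_\xi\to M_{\xi,D}$---but you organize the two parts in the opposite order and the execution differs in a couple of places worth noting. The paper first proves part (1) directly, without any reference to GMC theory: it shows $\tilde W^\delta_\xi(\varphi)$ is Cauchy in $L^2(\pr)$ by invoking the $L^1(D\times D)$ convergence of the renormalized two-point function (Proposition~\ref{prop:GLF_facts}(iii)), then upgrades to weak convergence in $M_+(D)$ via the tightness/Prokhorov argument recycled from Theorem~\ref{thm:main_1}; it establishes part (2) afterward by verifying the $\sigma$-positivity of the kernel $\alpha_D$ (decomposing $G_0(A_{1/N}(z))$ into independent increments over dyadic annuli and using Lemma~\ref{lem:alpha_cont} for continuity). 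You instead verify $M^\delta_\xi\to M_{\xi,D}$ via the equivalent $L^2$-bounded-martingale route along $\delta=2^{-n}$, declare $W_\xi:=e^{\frac{\xi^2}{2}\Theta_D}M_{\xi,D}$, and recover part (1) from this, which forces you to explicitly estimate the boundary shell $\{d_z\le\delta\}$ where the factorization \eqref{eq:plan2} fails. Your route has the merit of making the Radon--Nikodym formula immediate rather than a consequence after the fact, and it exposes the boundary degeneracy $\Theta_D(z)\to-\infty$; the paper's route avoids the boundary-shell bookkeeping entirely since the two-point $L^1$ estimate already integrates over all of $D\times D$. Two small cautions: (a) your Markov-inequality dismissal of the shell $\{d_z\le\delta\}$ controls only $\E[\tilde W^\delta_\xi(\ind_{\{d_z\le\delta\}}\varphi)]$, which suffices for convergence in probability but should be stated as such (the paper's $L^2$ Cauchy argument is genuinely stronger); (b) your identification of the ``$L^2$ phase'' threshold for the kernel $\tfrac15\log_+\tfrac1{|\cdot|}+g$ is in fact correct ($\xi^2/5<2$), but note the paper itself is somewhat loose about normalizations in Section~\ref{sec:prelim_glf_gmc} (stating $\xi<\sqrt 2$ for $L^2$ against the standard kernel while working with the kernel $\alpha_D$), so your justification is cleaner than what the text supplies.
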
	
    
    $W_{\xi}$ will be called the \textit{(real) Gaussian layering field} (GLF).  Let us observe that, given Proposition \ref{prop:alpha_near_bdry}, the expression of the Radon-Nikodym derivative in \eqref{eq:W_M_main} implies that the field $W_{\xi, D}$ vanishes near the boundary of $D$. This should be compared with the analogous expression \cite[eq. (1.6)]{cgpr21} which appears in the case of imaginary layering field. In that situation, again because of the behavior of $\Theta_D$ near $\partial D$, the imaginary layering field blows up with respect to the imaginary GMC.
    \begin{cor}\label{cor:layering_field_bdry}
        The right side of \eqref{eq:W_M_main} converges to $0$ as $z \to \partial D$. Thus,  on $\partial D$, the Gaussian layering field $W_{\xi, D}$ is singular with respect to the GMC .
    \end{cor}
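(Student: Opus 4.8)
The plan is to feed the boundary behaviour of $\alpha$ recorded in Proposition \ref{prop:alpha_near_bdry}(iii) into the explicit Radon--Nikodym density from Theorem \ref{thm:main_2}(2), and then read off both assertions. The first step I would carry out is to show that $\alpha_{d_z,D}(z)$ stays bounded as $z$ approaches $\partial D$: there exist $\epsilon_0>0$ and $C_0<\infty$ with $\alpha_{d_z,D}(z)\le C_0$ whenever $0<d_z<\epsilon_0$. This is a compactness argument: if it failed there would be points $z_n\in D$ with $d_{z_n}\to 0$ and $\alpha_{d_{z_n},D}(z_n)\to\infty$; since $\overline D$ is compact a subsequence converges to some $z_0$, and $d_{z_n}\to 0$ forces $z_0\in\partial D$, so $\limsup_n\alpha_{d_{z_n},D}(z_n)=\infty$, contradicting Proposition \ref{prop:alpha_near_bdry}(iii), which says this $\limsup$ equals the finite constant $c_D(z_0)$. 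Granting the bound, for $0<d_z<\epsilon_0$ one has $\Theta_D(z)=\tfrac15\log d_z+\alpha_{d_z,D}(z)\le\tfrac15\log d_z+C_0$, hence
\[
0<e^{\frac{\xi^2}{2}\Theta_D(z)}\le d_z^{\,\xi^2/10}\,e^{\frac{\xi^2}{2}C_0},
\]
whose right-hand side tends to $0$ as $d_z\to 0$ because $\xi^2/10>0$. This is exactly the first assertion: the right-hand side of \eqref{eq:W_M_main} tends to $0$ as $z\to\partial D$, and in fact at a power rate in $d_z$ that is uniform over $\partial D$. (If one only wants convergence along sequences, the compactness step can be replaced by the usual subsequence criterion, applying Proposition \ref{prop:alpha_near_bdry}(iii) directly along subsequences of a given sequence $z_n\to\partial D$.)

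For the singularity statement I would recall from Theorem \ref{thm:main_2}(2) that $W_{\xi,D}\ll M_{\xi,D}$ with density $\rho(z):=e^{\frac{\xi^2}{2}\Theta_D(z)}$, which is finite and strictly positive on $D$ but extends by $0$ to $\partial D$ by the previous step. Hence for every Borel set $B\subset D$ one has $W_{\xi,D}(B)\le\big(\sup_{w\in B}\rho(w)\big)M_{\xi,D}(B)$; taking $B$ to be the boundary collar $D_\epsilon=\{z\in D:d_z<\epsilon\}$ and using the bound above one gets $\sup_{D_\epsilon}\rho\le\epsilon^{\xi^2/10}e^{\xi^2 C_0/2}$ for $\epsilon<\epsilon_0$, so that, almost surely,
\[
\frac{W_{\xi,D}(D_\epsilon)}{M_{\xi,D}(D_\epsilon)}\le\sup_{D_\epsilon}\rho\le\epsilon^{\xi^2/10}e^{\xi^2C_0/2}\longrightarrow 0\qquad(\epsilon\to 0),
\]
on the event that $M_{\xi,D}$ charges $D_\epsilon$, which holds for all small $\epsilon$ since $M_{\xi,D}$ a.s. gives positive mass to nonempty open subsets of $D$; the same estimate with $B(z_0,r)\cap D$ in place of $D_\epsilon$ shows $W_{\xi,D}(B(z_0,r))/M_{\xi,D}(B(z_0,r))\to 0$ as $r\to 0$ for every $z_0\in\partial D$. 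This is the precise sense in which $W_{\xi,D}$ is singular with respect to the GMC at $\partial D$: although $W_{\xi,D}$ and $M_{\xi,D}$ are mutually absolutely continuous inside $D$ (since $\rho>0$ there), the density degenerates to $0$ on $\partial D$, so $W_{\xi,D}$ assigns asymptotically negligible mass to shrinking boundary neighbourhoods --- the ``vanishing near $\partial D$'' discussed before the corollary.

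The only step that is not pure bookkeeping is the passage from the pointwise $\limsup$ statement of Proposition \ref{prop:alpha_near_bdry}(iii) to the uniform collar bound $\alpha_{d_z,D}(z)\le C_0$, and this is what the compactness--subsequence argument in the first paragraph supplies; it is the step I would be most careful about. Everything else is a direct substitution into the formula $\Theta_D(z)=\tfrac15\log d_z+\alpha_{d_z,D}(z)$ together with the absolute continuity already established in Theorem \ref{thm:main_2}(2).
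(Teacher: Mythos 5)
Your argument is correct and matches the paper's: both hinge on the compactness of $\bar D$ to extract convergent subsequences and then invoke Proposition \ref{prop:alpha_near_bdry}(iii) to control $\alpha_{d_z,D}(z)$ near $\partial D$, after which $\Theta_D(z)=\tfrac15\log d_z+\alpha_{d_z,D}(z)\to-\infty$ forces the density $e^{\frac{\xi^2}{2}\Theta_D(z)}$ to vanish. You go somewhat further than the paper's brief remark by first deriving a uniform collar bound $\alpha_{d_z,D}(z)\le C_0$ for $d_z<\epsilon_0$ (yielding a quantitative power rate $d_z^{\xi^2/10}$) and by spelling out the measure-theoretic reading of ``singular on $\partial D$'' in terms of $W_{\xi,D}(B)/M_{\xi,D}(B)\to 0$ on shrinking boundary neighbourhoods, but these are elaborations of the same method rather than a different route.
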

    
     In the above statement, we only require that $z$ goes towards the boundary $\partial D$ of $D$, in the sense that $d(z, \partial D) \to 0$. This is more general than the assumption that $z \to z_0$ for some point $z_0 \in \partial D$. Since $\bar{\D}$ is compact, all sequences have convergent subsequences, and thus the statement follows from Proposition \ref{prop:alpha_near_bdry}. Finally, we state our main result that links the Poisson and the Gaussian  layering fields. 
    
    \begin{thm} \label{thm:main_3}
        Let $0<\xi< \sqrt{2}$ be fixed and $D \subset \C$ be bounded and dimply connected. As $\lambda \to \infty$ and $\lambda \beta^2 \to \xi^2$, the Poisson layering fields $V_{\lambda, \beta, D}$ converge weakly to the Gaussian layering field $W_{\xi, D}$.  
    \end{thm}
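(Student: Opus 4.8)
The plan is to interpolate through the ultraviolet‑regularized fields $\tilde V^\delta_{\lambda,\beta,D}$, $\tilde W^\delta_{\xi,D}$ and to exchange the limits $\delta\downarrow0$ and $\lambda\uparrow\infty$. Fix $0<\xi<\sqrt2$; since $\cosh$ is even and $-N^\delta_{\lambda,D}\stackrel d=N^\delta_{\lambda,D}$ (as processes), we have $\tilde V^\delta_{\lambda,\beta,D}\stackrel d=\tilde V^\delta_{\lambda,|\beta|,D}$, so we may assume $\beta>0$, whence $\lambda\beta^2\to\xi^2$ forces $\sqrt\lambda\,\beta\to\xi$, $\sqrt\lambda\,\beta^2\to0$, and in particular $\Delta(\lambda,\beta)\to\Delta_\xi$, $\lambda(\cosh\beta-1)\to\xi^2/2$, $\Delta(\lambda,2\beta)\to\xi^2/5<1$. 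I would establish: \emph{(a)} for each fixed $\delta>0$, $\tilde V^\delta_{\lambda,\beta,D}\to\tilde W^\delta_{\xi,D}$ weakly in distribution in $M_+(D)$ as $\lambda\to\infty$ with $\lambda\beta^2\to\xi^2$; \emph{(b)} $\tilde W^\delta_{\xi,D}\to W_{\xi,D}$ and $\tilde V^\delta_{\lambda,\beta,D}\to V_{\lambda,\beta,D}$ in probability as $\delta\downarrow0$ (Theorems \ref{thm:main_1} and \ref{thm:main_2}); \emph{(c)} the Poisson convergence in (b) is uniform in $\lambda$ along the scaling, i.e. $\lim_{\delta,\delta'\to0}\sup_\lambda\E\big[|\tilde V^\delta_{\lambda,\beta,D}(\varphi)-\tilde V^{\delta'}_{\lambda,\beta,D}(\varphi)|^2\big]=0$ for each $\varphi\in C_b(D)$. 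Granting (a)--(c), a standard interchange‑of‑iterated‑limits argument applied to the pairings $\nu\mapsto\nu(\varphi)$, together with tightness of $\{V_{\lambda,\beta,D}\}_\lambda$ in $M_+(D)$ (which follows from $\sup_\lambda\E[\tilde V^\delta_{\lambda,\beta,D}(D)]<\infty$, since $\E[\tilde V^\delta_{\lambda,\beta,D}(D)]=\int_D\delta^{2\Delta(\lambda,\beta)}e^{\lambda(\cosh\beta-1)\alpha_{\delta,D}(z)}\,dz$ is bounded uniformly in $\delta$ and in $\lambda$), gives $V_{\lambda,\beta,D}(\varphi)\Rightarrow W_{\xi,D}(\varphi)$ jointly over finitely many $\varphi\in C_b(D)$, hence $V_{\lambda,\beta,D}\Rightarrow W_{\xi,D}$.

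\textbf{The fixed‑$\delta$ step (a).} Fix $\delta>0$ and $\varphi\in C_b(D)$ and use the Wiener--It\^o chaos expansion. From \eqref{eq:layering_number_uv_1} and the chaos expansion of the stochastic exponential on the Poisson space $(S_\pm(D),\nu_\lambda)$ one gets $\tilde V^\delta_{\lambda,\beta,D}(\varphi)=\sum_{n\ge0}\frac1{n!}I^{(\lambda)}_n(f^{(\lambda,\delta,\varphi)}_n)$, where $I^{(\lambda)}_n$ is the $n$‑fold multiple integral against $N_\lambda-\nu_\lambda$ and
\[
f^{(\lambda,\delta,\varphi)}_n(x_1,\dots,x_n)=\int_D\varphi(z)\,\delta^{2\Delta(\lambda,\beta)}e^{\lambda(\cosh\beta-1)\alpha_{\delta,D}(z)}\prod_{i=1}^n\big(e^{\beta h^\delta_z(x_i)}-1\big)\,dz ;
\]
the interchange of sum and $z$‑integral is justified in $L^2$ for fixed $\delta$, where all quantities are finite. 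Analogously, $\tilde W^\delta_{\xi,D}(\varphi)=\sum_n\frac1{n!}J_n(g^{(\delta,\varphi)}_n)$, $J_n$ the Wiener--It\^o integral against $G$, with $g^{(\delta,\varphi)}_n=\xi^n\int_D\varphi(z)\delta^{2\Delta_\xi}e^{\frac{\xi^2}2\alpha_{\delta,D}(z)}(h^\delta_z)^{\otimes n}\,dz$. Since $\delta^{2\Delta(\lambda,\beta)}e^{\lambda(\cosh\beta-1)\alpha_{\delta,D}(z)}\to\delta^{2\Delta_\xi}e^{\frac{\xi^2}2\alpha_{\delta,D}(z)}$ and $\sqrt\lambda\,(e^{\beta h^\delta_z}-1)\to\xi h^\delta_z$ pointwise, we obtain $\lambda^{n/2}f^{(\lambda,\delta,\varphi)}_n\to g^{(\delta,\varphi)}_n$ with convergence of the $L^2$‑norms, the dominating bounds furnished by the finiteness of $\alpha_{\delta,D}(z)$ and $\alpha_{\delta,D}(z,w)$ at fixed $\delta$ (Lemma \ref{lem:alpha_cont} and \eqref{eq:mu_D_size}). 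This convergence of kernels is promoted to distributional convergence of the chaos series by the Poisson‑to‑Gaussian transfer / fourth‑moment criterion, exactly as in \cite{cgpr21}: one checks that the ``Poisson‑specific'' contraction norms $\|f^{(\lambda,\delta,\varphi)}_n\star^l_r f^{(\lambda,\delta,\varphi)}_m\|$ with $1\le l<r$ tend to $0$, the remaining $l=r$ contractions converging to their Gaussian counterparts. A power count does this: such a contraction, in squared $L^2(\nu_\lambda^{\otimes\cdot})$‑norm, carries a factor of order $(\lambda\beta^2)^{n+m}\lambda^{l-r}\to\xi^{2(n+m)}\cdot0$ when $l<r$ (all other factors being bounded at fixed $\delta$), whereas the $l=r$ contractions carry no compensating power of $\lambda$ and rebuild the product‑formula structure of $J_n(g^{(\delta,\varphi)}_n)$. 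Combined with the uniform‑in‑$\lambda$ chaos‑tail bound $\sup_\lambda\sum_{n>N}\frac1{n!}\E[I^{(\lambda)}_n(f^{(\lambda,\delta,\varphi)}_n)^2]\to0$ as $N\to\infty$, this yields $\tilde V^\delta_{\lambda,\beta,D}(\varphi)\Rightarrow\tilde W^\delta_{\xi,D}(\varphi)$, jointly over finitely many $\varphi$; with the tightness noted above this is (a).

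\textbf{The main obstacle.} The genuinely delicate point is (c): extracting from the proof of Theorem \ref{thm:main_1} an $L^2(\pr)$‑Cauchy estimate for $\delta\mapsto\tilde V^\delta_{\lambda,\beta,D}(\varphi)$ that is \emph{uniform} in $\lambda$ as $\lambda\to\infty$, $\lambda\beta^2\to\xi^2$, so that the order of the two limits may be reversed. This requires revisiting the second‑moment computations behind Theorem \ref{thm:main_1} and verifying that the constants there --- which enter only through $\Delta(\lambda,2\beta)\to\xi^2/5<1$, through $\lambda(\cosh\beta-1)\to\xi^2/2$, and through the behaviour of $\alpha_D$ on the diagonal (Lemma \ref{lem:alpha_cont}) and near $\partial D$ (Proposition \ref{prop:alpha_near_bdry}) --- stay bounded along the scaling; the hypothesis $\xi<\sqrt2$ is exactly what keeps the limiting second moments finite and hence makes these uniform bounds available. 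A related nuisance, also tied to uniformity, is that the contraction and tail estimates of step (a) degrade as $\delta\downarrow0$; this is why the argument is organized to send $\lambda\to\infty$ first at fixed $\delta$ and only then let $\delta\downarrow0$, the uniform $L^2$ control of the $\delta$‑approximation --- for both the Poisson and the Gaussian field --- doing the rest.
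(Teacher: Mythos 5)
Your proposal uses the same underlying chaos--expansion machinery as the paper (kernels $\lambda^{q/2}f_q\to g_q$ in $L^2$ plus a Poisson-to-Gaussian transfer via contraction norms) but organizes the argument differently: you work with the cutoff fields $\tilde V^\delta_{\lambda,\beta}$ and $\tilde W^\delta_\xi$, prove $\tilde V^\delta_{\lambda,\beta}(\varphi)\Rightarrow \tilde W^\delta_\xi(\varphi)$ at fixed $\delta$, and then try to exchange the limits $\lambda\to\infty$ and $\delta\downarrow 0$. The paper circumvents this interchange entirely. Propositions \ref{prop:chaos_decomp_layering} and \ref{prop:GLF_chaos_exp} give the Wiener--It\^o expansions of the \emph{limiting} fields $V_{\lambda,\beta}(\varphi)$ and $W_\xi(\varphi)$ directly (the $\delta\to0$ limit is already taken at the level of the kernels, using the $L^2$-completeness from Theorem \ref{thm:main_1} and the integrability in \eqref{eq:chaos_decomp_layering_7}); Proposition \ref{prop:prelim_main_thm}(a)--(d) then establishes kernel convergence $\|\lambda^{q/2}f^\varphi_{q,\lambda,\beta}-w^\varphi_{q,\xi}\|_{L^2(\nu^q)}\to0$ for each $q$, square-summability of the Gaussian kernels, and a tail control $\limsup_\lambda\sum_{q>N} q!\|\lambda^{q/2}f^\varphi_{q,\lambda,\beta}\|^2\to0$ uniformly along the scaling. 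With those four facts in hand, the argument of \cite[pp.\ 920--921]{cgpr21} applies verbatim.

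The genuine gap in your proposal is exactly the step you flag as delicate, namely (c): you assert but do not prove the uniform-in-$\lambda$ $L^2(\pr)$-Cauchy estimate for $\delta\mapsto\tilde V^\delta_{\lambda,\beta,D}(\varphi)$. This is not a cosmetic omission. The Cauchy estimate in the proof of Theorem \ref{thm:main_1} reduces (via \eqref{eq:V_delta_cauchy_1}--\eqref{eq:V_delta_cauchy_2}) to the $L^1(D\times D)$ convergence of two-point functions of Lemma \ref{lem:L^1_conv}, which is proved by dominated convergence; to make it uniform one needs a single dominating function valid for all $(\lambda,\beta)$ along the scaling, and a quantitative rate --- neither of which you supply. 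Since all constants in Lemma \ref{lem:L^1_conv} enter through $\Delta(\lambda,\beta)$ and $\Delta(\lambda,2\beta)$, which do stay bounded, this is likely doable; but it is additional work the paper avoids. You also then need tightness of $\{V_{\lambda,\beta,D}\}_\lambda$ in $M_+(D)$ (your parenthetical about $\sup_\lambda\E[\tilde V^\delta(D)]$ gives boundedness in mean but not inner-compactness control), whereas the paper only needs convergence of the random variables $V_{\lambda,\beta}(\varphi)$ for finitely many $\varphi$ and invokes Cram\'er--Wold. Said briefly: your plan is workable in principle, but it replaces a clean kernel-by-kernel comparison of the limiting fields by an interchange-of-limits argument whose uniformity input is left unproved, and doing so also drags in a tightness-in-$M_+(D)$ claim not needed in the paper's proof.
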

    By \textit{weak convergence} here we mean that the convergence is in the sense of finite-dimensional distributions, i.e. if $\varphi_1, \ldots, \varphi_n \in C_b(D)$, we have,
			\begin{align}\label{eq:f.d.d.}
				(V_{\lambda, \beta}(\varphi_1), \ldots, V_{\lambda, \beta}(\varphi_n)) \implies (W_{\xi}(\varphi_1), \ldots, W_{\xi}(\varphi_n)),
			\end{align}
	as $\lambda \to \infty$, $\lambda \beta^2 \to \xi^2$.

    \begin{rem}
        Before finishing this section, we highlight the important difference between the hypotheses of the above theorems regarding the parameter $\xi$. Since we will exploit the standard multiplicative chaos theory to show the existence of the Gaussian layering field, we can state Theorem \ref{thm:main_2} for the whole subcritical range $\xi \in (0,2)$. On the other hand, Theorem \ref{thm:main_3} can only be stated for the range $\xi \in (0, \sqrt{2})$ as the technique of chaos expansion requires that the fields be bounded in $L^2$. 
    \end{rem}
   	
   	\section{Existence of the Poisson and Gaussian layering fields}\label{sec:PLF_GLF_exist}

    This section is dedicated to the proofs of Theorem \ref{thm:main_1} and Theorem \ref{thm:main_2}. As our treatments of the Poisson and the Gaussian cases are similar, some aspects of their existence proofs can be unified. To this end, we explicitly  calculate the one and two-point functions of $V^{\delta}_{\beta}= V^{\delta}_{\beta, \lambda, D}$ and $W^{\delta}_{\xi} = W^{\delta}_{\xi, D}$ and also discuss their ultraviolet limits (i.e. when $\delta \to 0$). Using these observations, we prove the existence of $V_{\lambda, \beta, D}$ in Section \ref{sec:PLF_exists} and and that of $W_{\xi, D}$ in Section \ref{sec:GLF_exists}. Since the intensity $\lambda >0$ and the domain $D$ are fixed in this section, let us ignore them whenever possible. 
	
	\subsection{Correlation functions} \label{sec:corr_fns}
	
	First, let us consider the Poisson fields. Recall that $\Delta(\lambda, \beta) = \Delta (\beta) = \frac{\lambda}{10}(\cosh(\beta)-1)$ and we introduce the following new notations: $d_{z,w} = d^D_{z,w} : = |z-w| \wedge d(z, \partial D)$ and $ d_{w,z} = d^D_{w,z} := |z-w| \wedge d(w, \partial D)$ for $z, w \in D$.
	
	\begin{lem}\label{lem:1,2point}
		Let $\beta \in \R$, $z, w \in D$ be two distinct points. Then,
		\begin{itemize}
			\item[(i)] For every $\delta>0$, we have,
			\begin{align}\label{eq:1_point_0}
				\E (V^{\delta}_{\beta}(z)) = & e^{\lambda  \alpha_{\delta, D} (z) (\cosh(\beta) -1) }= e^{10\Delta(\beta) \alpha_{\delta}(z)}.
			\end{align}
			And when $d_z = d(z, \partial D)> R> \delta>0$ we have,
			\begin{align}\label{eq:1_point_UV_IR}
				\E (V^{\delta, R}_{\beta}(z)) = & \left(R/\delta\right)^{2\Delta(\beta)}
			\end{align}
            where $V^{\delta, R}_{\beta}$ was defined in \eqref{eq:real_LF_uv_ir}.

            \item[(ii)] As $\delta \to 0$, the limit
			\begin{align}\label{eq:conv_1_pt_0}
				\langle V_{\beta}(z) \rangle_D : = \lim_{\delta \to 0}\E (\tilde{V}^{\delta}_{\beta}(z)) = d_z^{2\Delta(\beta)} e^{10\Delta(\beta) \alpha_{d_z, D}(z)}
			\end{align}
			exists.
			
			\item[(iii)] Suppose $|z-w|>\delta, \delta' >0$. Then for all $\beta, \beta' \in \R$ we have,
			\begin{align}\label{eq:2point_0}
				\E \left( V^{\delta}_{\beta}(z) V^{\delta'}_{\beta'}(w)\right) 
				= e^{ \lambda \alpha_{\delta}(z|w)(\cosh(\beta) -1)} e^{ \lambda \alpha(z,w) \left(\cosh(\beta+\beta')-1\right)} e^{\lambda\alpha_{\delta'}(w|z)(\cosh(\beta') -1) }
			\end{align}		
            The notations $\alpha_{\delta}(z|w), \alpha(z,w) = \alpha_{|z-w|}(z, w)$ etc were defined in \eqref{eq:loop_sets}.

            \item[(iv)] The limit
			\begin{align}\label{eq:conv_2_pt_0}
				&\langle V_{\beta}(z) V_{\beta}(w) \rangle_D := \lim_{\delta, \delta' \to 0} \E( \tilde{V}^{\delta}_{\beta}(z) \tilde{V}^{\delta'}_{\beta}(w)) 			
				\nonumber \\
				= & (d_{z,w}d_{w,z})^{2 \Delta(\beta)}	 e^{10 \Delta(2\beta)  \alpha(z,w) } e^{10 \Delta(\beta)\alpha_{d_{z,w}}(z|w)} e^{10 \Delta(\beta)\alpha_{d_{w,z}}(w|z)}			
			\end{align}
			exists finitely. 
		\end{itemize}		       
	\end{lem}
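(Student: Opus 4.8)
All four statements reduce to two facts: the moment generating function $\E[e^{tP}] = e^{m(e^t-1)}$ of a Poisson$(m)$ variable, and the exact mass formula $\alpha_{\delta,R,D}(z) = \tfrac15\log(R/\delta)$ from \eqref{eq:mu_size_uv_ir}. For (i), I would start from $N^\delta_{\lambda,D}(z) = \mathcal{L}^+_{\lambda/2}(A_{\delta,D}(z)) - \mathcal{L}^-_{\lambda/2}(A_{\delta,D}(z))$, where the two terms are independent $\Poi\!\left(\tfrac\lambda2\alpha_{\delta,D}(z)\right)$ variables (finite by \eqref{eq:mu_D_size}). Independence and the MGF formula give $\E[V^\delta_\beta(z)] = e^{\frac\lambda2\alpha_{\delta,D}(z)(e^\beta-1)}\,e^{\frac\lambda2\alpha_{\delta,D}(z)(e^{-\beta}-1)} = e^{\lambda\alpha_{\delta,D}(z)(\cosh\beta-1)}$, which is \eqref{eq:1_point_0} once we recall $\lambda(\cosh\beta-1) = 10\Delta(\beta)$. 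For \eqref{eq:1_point_UV_IR}, the hypothesis $\delta<R<d_z$ forces $B(z,R)\subset D$, so $\alpha_{\delta,R,D}(z) = \tfrac15\log(R/\delta)$ by \eqref{eq:mu_size_uv_ir}, and the same MGF computation with $A_{\delta,R,D}(z)$ replacing $A_{\delta,D}(z)$ yields $(R/\delta)^{\lambda(\cosh\beta-1)/5} = (R/\delta)^{2\Delta(\beta)}$.

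\textbf{Removing the ultraviolet cutoff (part (ii)).} The key observation is that for $\delta<d_z$ there is an exact decomposition $\alpha_{\delta,D}(z) = \alpha_{d_z,D}(z) + \alpha_{\delta,d_z,D}(z) = \alpha_{d_z,D}(z) + \tfrac15\log(d_z/\delta)$: the first equality is the partition $A_{\delta,D}(z) = A_{d_z,D}(z)\sqcup A_{\delta,d_z,D}(z)$ up to the $\mu_D$-null set $\{z\in\bar\gamma,\ d(\gamma)=\delta\}$, and the second follows by letting $R\uparrow d_z$ in \eqref{eq:mu_size_uv_ir} together with monotone convergence. Substituting into \eqref{eq:1_point_0} and multiplying by $\delta^{2\Delta(\beta)}$, the $\delta$-dependence cancels exactly, so $\E[\tilde V^\delta_\beta(z)] = d_z^{2\Delta(\beta)}e^{10\Delta(\beta)\alpha_{d_z,D}(z)}$ for all small $\delta$; in particular the limit \eqref{eq:conv_1_pt_0} exists, and is actually eventually constant in $\delta$.

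\textbf{Two-point functions (parts (iii)--(iv)).} Since $|z-w|>\delta\vee\delta'$ and any loop whose hull contains both $z$ and $w$ has diameter at least $|z-w|$, I would decompose the loops feeding $\beta N^\delta_{\lambda,D}(z)+\beta' N^{\delta'}_{\lambda,D}(w)$ into the three pairwise disjoint families $A_{\delta,D}(z|w)$, $A_{\delta',D}(w|z)$ and $A_D(z,w)=\{z,w\in\bar\gamma\}$, on which the respective contributions are $\beta X_\gamma$, $\beta' X_\gamma$ and $(\beta+\beta')X_\gamma$; here one checks $A_{\delta,D}(z)=A_{\delta,D}(z|w)\sqcup A_D(z,w)$ and likewise for $w$. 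By independence of the loop soup over disjoint sets and of the attached signs, the exponential factorizes into three one-point expectations, and part (i) applied to each gives \eqref{eq:2point_0}. For (iv), I would apply the cancellation mechanism of (ii) to each of the two cutoff-dependent factors: for $\delta$ small enough, $\alpha_{\delta,D}(z|w) = \alpha_{d_{z,w},D}(z|w) + \tfrac15\log(d_{z,w}/\delta)$ — because a loop whose hull contains $z$ with diameter below $d_{z,w}=|z-w|\wedge d_z$ necessarily omits $w$ and stays inside $B(z,d_z)\subset D$, so it is counted by the exact formula \eqref{eq:mu_size_uv_ir} — and symmetrically for $\alpha_{\delta',D}(w|z)$, while $e^{10\Delta(2\beta)\alpha_D(z,w)}$ carries no cutoff. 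Collecting terms and using $\lambda(\cosh 2\beta-1)=10\Delta(2\beta)$ produces \eqref{eq:conv_2_pt_0}; finiteness is immediate since $\alpha_{d_z,D}(z)$, $\alpha_{d_{z,w},D}(z|w)$ and $\alpha_D(z,w)$ are all bounded by $\alpha_{\cdot,D}(z)$-type masses, hence finite by \eqref{eq:mu_D_size}.

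\textbf{Main obstacle.} Nothing here is deep; the only care needed is geometric and measure-theoretic bookkeeping, which I would relegate to an appendix: verifying that a loop whose hull contains $z$ and whose diameter is below $d_{z,w}$ cannot reach $w$ or $\partial D$ (so the exact mass formula applies to it), handling the gap between $B(z,R)\subset D$ in \eqref{eq:mu_size_uv_ir} and $B(z,d_z)\subseteq D$ via the monotone limit $R\uparrow d_z$, and discarding the diameter-threshold sets $\{d(\gamma)=\delta\}$, which have zero $\mu_D$-mass (bound them by $\alpha_{\delta-\varepsilon,\delta+\varepsilon,D}(z)=\tfrac15\log\frac{\delta+\varepsilon}{\delta-\varepsilon}\to 0$). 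Once these routine points are settled, (i)--(iv) follow by direct computation.
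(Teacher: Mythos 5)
Your proposal is correct and follows essentially the same route as the paper's: independence of $\mc{L}^{\pm}_{\lambda/2}$ plus the Poisson MGF for (i), the exact cancellation $\alpha_{\delta,D}(z)=\tfrac15\log(d_z/\delta)+\alpha_{d_z,D}(z)$ for (ii), and the disjoint decomposition of the relevant loop collections into $A_\delta(z|w)$, $A(z,w)$, $A_{\delta'}(w|z)$ for (iii)--(iv). The small bookkeeping points you flag (the $d(\gamma)\geq|z-w|$ inclusion, the monotone $R\uparrow d_z$, the $\{d(\gamma)=\delta\}$ null sets) are exactly the points the paper implicitly uses, so there is no substantive difference.
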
	
	
	The proofs of the above formulae are in Appendix \ref{app:proof_lem:1,2point}. We next show that the convergence obtained in Lemma \ref{lem:1,2point} for the 2-point function holds in $L^1(D\x D)$ when $\Delta(2\beta, \lambda)<1$. Since $\Delta(2\beta, \lambda) \to  \frac{\xi^2}{5}$ 
        as $\lambda \to \infty, \lambda \beta^2 \to \xi^2$, this condition therefore requires $0<\xi<\sqrt{5}$, and in particular covers the range mentioned in Theorems \ref{thm:main_2}  and \ref{thm:main_3}. (See Remark 1.2 in \cite{cgpr21} for a comment on this restriction.) 	
	\begin{lem}\label{lem:L^1_conv}
		Assume that $\beta \in \R$ is such that $\Delta(2\beta)  < 1$. As $\delta,\delta' \to 0$, we have
		\begin{align}\label{eq:L^1_conv_0}
			\int_D \int_D \, dz \, dw\, |\E ( \tilde{V}^{\delta}_{\beta}(z) \tilde{V}^{\delta'}_{\beta}(w)) - \langle V_{\beta}(z) V_{\beta}(w) \rangle | \to 0.
		\end{align}
	\end{lem}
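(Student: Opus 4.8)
\textbf{Proof proposal for Lemma \ref{lem:L^1_conv}.}

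The plan is to dominate the integrand by an integrable function uniformly in $\delta, \delta'$ and then invoke the dominated convergence theorem, using the pointwise convergence already established in Lemma \ref{lem:1,2point}(iv). First I would record that, by the explicit two-point formula \eqref{eq:2point_0} together with the cutoff comparisons in \eqref{eq:loop_sets} and the monotonicity $\alpha_{\delta}(z|w) \le \alpha_{\delta}(z) \le \alpha_{\delta, D}(z)$, each factor in $\E(\tilde V^{\delta}_{\beta}(z)\tilde V^{\delta'}_{\beta}(w))$ is controlled: the factors $e^{10\Delta(\beta)\alpha_{\delta}(z|w)}$ and $e^{10\Delta(\beta)\alpha_{\delta'}(w|z)}$ are bounded above by $e^{10\Delta(\beta)\alpha_{d(D)/|z-w|, \text{something}}}$-type quantities but more cleanly, since $z\mapsto \alpha_{d_z,D}(z)+\frac15\log d_z$ is continuous on $D$ (Proposition \ref{prop:alpha_near_bdry} and Lemma \ref{lem:alpha_cont}) and $D$ is bounded, there is a constant $C_D$ with $e^{10\Delta(\beta)\alpha_{d_{z,w}}(z|w)} \le C_D\, d_{z,w}^{-2\Delta(\beta)}$, and similarly with $z,w$ swapped; and $\delta^{2\Delta(\beta)}e^{10\Delta(\beta)\alpha_\delta(z|w)}$ stays bounded as $\delta\to 0$ by the same reasoning applied with $d_{z,w}$ replaced by the relevant scale. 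The upshot is that
\begin{align}
  \E\big(\tilde V^{\delta}_{\beta}(z)\tilde V^{\delta'}_{\beta}(w)\big) \;\le\; C\, e^{10\Delta(2\beta)\,\alpha(z,w)}
\end{align}
for a constant $C = C(D,\beta,\lambda)$ not depending on $\delta,\delta'$, since the $d_{z,w}d_{w,z}$ powers from the normalization cancel the $d_{z,w}^{-2\Delta(\beta)}$ blow-ups of the conditional-loop factors. The same bound of course controls the limit $\langle V_\beta(z)V_\beta(w)\rangle_D$.

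Next I would check that the right-hand side is in $L^1(D\times D)$ precisely under the hypothesis $\Delta(2\beta)<1$. By Lemma 4.4 of \cite{cgpr21} (restated before Lemma \ref{lem:alpha_cont}), $\alpha(z,w) = \frac15\log_+\frac1{|z-w|} + g(z,w)$ with $g$ continuous and bounded on $D\times D$, so
\begin{align}
  e^{10\Delta(2\beta)\,\alpha(z,w)} \;\le\; C'\, |z-w|^{-2\Delta(2\beta)}
\end{align}
on the region $|z-w|\le 1$ and is bounded elsewhere. Since $D$ is a bounded domain in $\R^2$, $\iint_{D\times D}|z-w|^{-2\Delta(2\beta)}\,dz\,dw < \infty$ iff $2\Delta(2\beta) < 2$, i.e. $\Delta(2\beta)<1$, which is exactly the standing assumption. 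Hence the dominating function $2\cdot C\, e^{10\Delta(2\beta)\alpha(z,w)}$ (covering both the $\delta,\delta'$-indexed term and its limit) is integrable on $D\times D$.

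With the uniform integrable domination in hand, the conclusion follows from the dominated convergence theorem: for a.e.\ $(z,w)$ with $z\ne w$ the integrand $|\E(\tilde V^{\delta}_\beta(z)\tilde V^{\delta'}_\beta(w)) - \langle V_\beta(z)V_\beta(w)\rangle|$ tends to $0$ as $\delta,\delta'\to0$ by Lemma \ref{lem:1,2point}(iv), the diagonal being Lebesgue-null in $D\times D$, so the integral tends to $0$. I expect the main obstacle to be the bookkeeping in the first step: one must verify carefully that the prefactors $\delta^{2\Delta(\beta)}e^{10\Delta(\beta)\alpha_\delta(z|w)}$ and $(d_{z,w}d_{w,z})^{2\Delta(\beta)}e^{10\Delta(\beta)(\alpha_{d_{z,w}}(z|w)+\alpha_{d_{w,z}}(w|z))}$ are genuinely bounded uniformly in the cutoffs and in $(z,w)\in D\times D$ — this rests on Proposition \ref{prop:alpha_near_bdry}(iii) (that $\alpha_{d_z,D}(z)+\frac15\log d_z$ extends continuously, hence boundedly, to $\bar D$) and on splitting $\alpha_{\delta}(z|w)$ according to whether loops have diameter below or above $|z-w|\wedge d_z$, after which the exact formula \eqref{eq:mu_size_uv_ir} handles the small-loop contribution. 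Once that uniform bound is nailed down, the rest is a routine application of dominated convergence, and the polar-coordinate estimate showing $\Delta(2\beta)<1$ is exactly the integrability threshold is standard.
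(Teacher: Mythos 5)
Your argument is correct, and in fact it is cleaner than the paper's. Both proofs run on the same high-level plan (uniform integrable majorant plus dominated convergence, using the pointwise limit from Lemma \ref{lem:1,2point}(iv)), but the paper and you differ in how the majorant is extracted and hence in how much machinery is needed to integrate it. The paper bounds the two-point function by
\[
(d_{z,w}d_{w,z})^{2\Delta(\beta)}\exp\bigl[(5\Delta(2\beta)+10\Delta(\beta))(\alpha_{d_{z,w}}(z)+\alpha_{d_{w,z}}(w))\bigr],
\]
which after invoking \eqref{eq:mu_D_size} leads to $(d_{z,w}d_{w,z})^{-\Delta(2\beta)}$ up to a constant; this expression can blow up near $\partial D$ (where $d_z,d_w\to 0$), so the paper must split $D\times D$ into regions and control the boundary contribution via Koebe's $1/4$ theorem and the boundary smoothness of $D$. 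Your route exploits the cancellation earlier: using $\alpha_{\delta}(z\mid w)\le\alpha_{\delta}(z)\le\tfrac15\log\tfrac{d(D)}{\delta}$ you get $\delta^{2\Delta(\beta)}e^{10\Delta(\beta)\alpha_{\delta}(z\mid w)}\le d(D)^{2\Delta(\beta)}$ (likewise for the $w\mid z$ factor), which kills the two boundary-sensitive prefactors entirely and leaves the majorant $d(D)^{4\Delta(\beta)}\,e^{10\Delta(2\beta)\,\alpha(z,w)}\le C\,|z-w|^{-2\Delta(2\beta)}$. The integrability of $|z-w|^{-2\Delta(2\beta)}$ over a bounded planar domain is exactly $\Delta(2\beta)<1$ and requires no boundary regularity and no conformal maps; this is a genuine simplification. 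Two small remarks: (a) you do not need Lemma 4.4 of \cite{cgpr21} to bound $\alpha(z,w)$ from above — the inequality $\alpha(z,w)\le\alpha_{|z-w|}(z)\le\tfrac15\log\tfrac{d(D)}{|z-w|}$ from \eqref{eq:mu_D_size} is more elementary and already gives what you use, and that lemma's $g$ is only asserted to be continuous on the open $D\times D$, so ``bounded'' would deserve a word of justification; (b) the formula \eqref{eq:2point_0} that you quote is stated for $\delta,\delta'<|z-w|$, so strictly speaking you should either note that for each fixed $(z,w)$ with $z\ne w$ this regime is eventually reached (which, together with the observation that the integrand is then exactly zero, suffices for dominated convergence), or observe that the same bound $\E(\tilde V^{\delta}_\beta(z)\tilde V^{\delta'}_\beta(w))\le d(D)^{4\Delta(\beta)}e^{10\Delta(2\beta)\alpha(z,w)}$ holds for all $\delta,\delta'>0$ by the identical computation applied to the disjoint decomposition $A_{\delta}(z)\setminus A_{\delta'}(w)$, $A_{\delta}(z)\cap A_{\delta'}(w)$, $A_{\delta'}(w)\setminus A_{\delta}(z)$ without assuming $\delta,\delta'<|z-w|$.
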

	   The above is reminiscent of \cite[Theorem 3.3]{cgpr21} and \cite[Proposition 5.3]{vcl18} and can be proved similarly. We nevertheless present its proof in Appendix \ref{app:proof_lem:L^1_conv} for the reader's convenience. 
       
       Although we do not need to consider general $n$-point functions for the proof of our main results, for the sake of completeness we include their formulae in the following proposition and show that they satisfy a conformal covariance property. The proof follows the same line of arguments as that of \cite[Theorem 4.1]{cgk16} and is included in Appendix \ref{app:proof_prop:n_point}.
	
	\begin{prop}\label{prop:n_point}
	(i)	For $n \ge 2$, $\beta_1, \ldots, \beta_n \in R$ and $z_1, \ldots, z_n \in D$ the limit
			\begin{align}\label{eq:conv_n_pt_0}
				\lim_{\delta \to 0} \E\left(\prod_{j=1}^{n} \tilde{V}^{\delta}_{\beta_j, D}(z_j)\right)=\lim_{\delta \to 0} \delta^{2\sum_{j=1}^n \Delta(\beta_j)} \E\left(\prod_{j=1}^n V^{\delta}_{\beta_j , D}(z_j)\right) = : \phi_D(z_1, \ldots, z_n; \bm{\beta})
			\end{align}
			exists finitely, with $\bm{\beta} = (\beta_1, \ldots, \beta_n)$ 
   
     (ii)  Let $D, D' \subset \C$ be two conformally equivalent bounded open domains and suppose $f: D \to D'$ is a conformal map. Also let $\bm{\beta} = (\beta_1, \ldots, \beta_n)$, $z_i \in D$ and $z_i':= f(z_i) \in D'$ with $i =1, \ldots, n$. Then, 
		\begin{align} \label{eq:conf_cov_n_pt}
			\phi_{D'}(z'_1, \ldots, z'_n;\bm{\beta}) = \left(\prod_{j=1}^n |f'(z_j)|^{2\Delta(\beta_j)}\right) \phi_D(z_1, \ldots, z_n;\bm{\beta}) .
		\end{align}		
	\end{prop}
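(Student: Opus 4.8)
The plan is to compute the unrenormalized $n$-point function $\E\bigl(\prod_{j=1}^n V^{\delta}_{\beta_j,D}(z_j)\bigr)$ in closed form, extract its $\delta\to0$ asymptotics to obtain (i), and then push the resulting formula forward by $f$ to obtain (ii). For the closed form, recall from \eqref{eq:locally_exploding_kernel}--\eqref{eq:layering_number_uv_1} that $\sum_{j=1}^n\beta_j N^{\delta}_{\lambda,D}(z_j)=N_{\lambda}(g)$ with $g(\epsilon,\gamma)=\epsilon\sum_{j=1}^n\beta_j\ind_{A_{\delta}(z_j)}(\gamma)$, so the exponential (Campbell) formula for the Poisson random measure $N_\lambda$ of intensity $\tfrac12(\delta_{+1}+\delta_{-1})\otimes\lambda\mu_D$, combined with $\tfrac12(e^{c}+e^{-c})=\cosh c$ and a partition of $S(D)$ according to the index set $I$ of points whose hull a loop contains, gives
\begin{align*}
  \E\Bigl(\prod_{j=1}^n V^{\delta}_{\beta_j,D}(z_j)\Bigr)
  =\exp\Bigl(\lambda\sum_{\emptyset\ne I\subseteq\{1,\dots,n\}}\mu_D(A^{\delta}_I)\,\bigl(\cosh({\textstyle\sum_{j\in I}\beta_j})-1\bigr)\Bigr),
\end{align*}
where $A^{\delta}_I$ is the set of loops of diameter at least $\delta$ whose hull contains exactly $\{z_j:j\in I\}$ (the integrability required in Campbell's formula is immediate from \eqref{eq:mu_D_size}).

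The crucial observation is that when $|I|\ge 2$ every loop in $A^{\delta}_I$ has a hull containing two distinct $z_j$, hence diameter at least $\rho:=\min_{i\ne j}|z_i-z_j|>0$; so $\mu_D(A^{\delta}_I)$ stabilizes to a finite constant $\mu_D(A_I)$ once $\delta<\rho$, and the only surviving singularity sits in the singleton terms, for which $A_{\delta,D}(z_j)=\bigsqcup_{I\ni j}A^{\delta}_I$ gives $\mu_D(A^{\delta}_{\{j\}})=\alpha_{\delta,D}(z_j)-\sum_{I\ni j,\,|I|\ge2}\mu_D(A_I)$. Multiplying by $\delta^{2\sum_j\Delta(\beta_j)}=\exp\bigl(\tfrac{\lambda}{5}\log\delta\sum_j(\cosh\beta_j-1)\bigr)$ (since $2\Delta(\beta)=\tfrac{\lambda}{5}(\cosh\beta-1)$), the divergent part of the exponent reorganizes into $\lambda\sum_j(\cosh\beta_j-1)\bigl(\tfrac15\log\delta+\alpha_{\delta,D}(z_j)\bigr)$, which by \eqref{eq:conv_1_pt_0} equals $\lambda\sum_j(\cosh\beta_j-1)\Theta_D(z_j)$ with $\Theta_D(z)=\tfrac15\log d_z+\alpha_{d_z,D}(z)$. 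Hence the limit in \eqref{eq:conv_n_pt_0} exists and
\begin{align*}
  \phi_D(z_1,\dots,z_n;\bm{\beta})
  =\exp\Bigl(\lambda\sum_{j=1}^n(\cosh\beta_j-1)\Theta_D(z_j)
  +\lambda\sum_{|I|\ge2}\mu_D(A_I)\bigl[\cosh({\textstyle\sum_{j\in I}\beta_j})-1-{\textstyle\sum_{j\in I}}(\cosh\beta_j-1)\bigr]\Bigr).
\end{align*}

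For (ii), the conformal restriction property \eqref{eq:conf_inv} gives $\mu_{D'}=f\circ\mu_D$, and a conformal homeomorphism carries the hull of a simple loop $\gamma$ onto the hull of $f(\gamma)$ (it sends the bounded complementary component of $\gamma$ in $D$ to that of $f(\gamma)$ in $D'$); pushing the closed-form expression forward, with $z_j=f^{-1}(z'_j)$, the covering pattern is unchanged and only the ultraviolet cutoff turns into the distorted condition $d(f(\gamma))\ge\delta$. For $|I|\ge2$ this is again automatic once $\delta<\min_{i\ne j}|z'_i-z'_j|$, so these terms contribute $\lambda\mu_D(A_I)(\cosh(\sum_{j\in I}\beta_j)-1)$, exactly as in $\phi_D$; for the singletons, pushing forward once more gives $\mu_D(\{\gamma:\bar\gamma\ni z_j,\,d(f(\gamma))\ge\delta\})=\mu_{D'}(\{\gamma':\overline{\gamma'}\ni z'_j,\,d(\gamma')\ge\delta\})=\alpha_{\delta,D'}(z'_j)$. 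Re-running the renormalization computation above, $\phi_{D'}(z'_1,\dots,z'_n;\bm{\beta})$ comes out in the same form as $\phi_D$ but with $\Theta_D(z_j)$ replaced by $\Theta_{D'}(z'_j)$, the bracketed sums unchanged. As $\prod_j|f'(z_j)|^{2\Delta(\beta_j)}=\exp\bigl(\lambda\sum_j(\cosh\beta_j-1)\tfrac15\log|f'(z_j)|\bigr)$, the identity \eqref{eq:conf_cov_n_pt} is therefore equivalent to the single-point transformation law
\begin{align*}
  \Theta_{D'}(f(z))=\Theta_D(z)+\tfrac15\log|f'(z)|,\qquad z\in D.
\end{align*}

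This transformation law is the main obstacle, being the one place where the conformal distortion of $f$ genuinely enters. I would prove it from the exact identity $\tfrac15\log\delta+\alpha_{\delta,D}(z)=\Theta_D(z)$ (valid for $\delta<d_z$, a consequence of \eqref{eq:mu_size_uv_ir}): pushing forward, $\alpha_{\delta,D}(z)=\mu_{D'}(\{\gamma':\overline{\gamma'}\ni z',\,d(f^{-1}(\gamma'))\ge\delta\})$ with $z'=f(z)$. Given $\eta\in(0,1)$, differentiability of $f^{-1}$ at $z'$ yields $r>0$ with $B(z',r)\subset D'$ such that every loop $\gamma'$ covering $z'$ with $d(\gamma')\le r/2$ (hence $\gamma'\subset B(z',r)$) satisfies $(1-\eta)|f'(z)|^{-1}d(\gamma')\le d(f^{-1}(\gamma'))\le(1+\eta)|f'(z)|^{-1}d(\gamma')$; separating off the loops of diameter $>r/2$ --- of finite $\mu_{D'}$-mass $\alpha_{r/2,D'}(z')$ around $z'$, eventually insensitive to the cutoff as $\delta\to0$ --- the exact annular mass \eqref{eq:mu_size_uv_ir} shows the remaining small loops contribute $\tfrac15\log\frac{r/2}{|f'(z)|\,\delta}+O(\eta)$. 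Sending $\delta\to0$ and then $\eta\to0$, and identifying $\tfrac15\log(r/2)+\alpha_{r/2,D'}(z')=\Theta_{D'}(z')$ via \eqref{eq:mu_size_uv_ir} once more, gives $\Theta_D(z)=\Theta_{D'}(z')-\tfrac15\log|f'(z)|$. This is exactly the distortion argument underlying \cite[Theorem 4.1]{cgk16}; the uniform diameter-distortion estimate is the only delicate step, while the stabilization and finiteness of the $|I|\ge2$ masses are routine.
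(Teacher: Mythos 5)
Your proposal is correct and follows essentially the same route as the paper: the same exact formula for the $n$-point function (derived via the exponential formula for the Poisson random measure rather than by conditioning on covering configurations, a cosmetic difference), the same observation that the masses of loops covering two or more of the points stabilize once $\delta$ is below the minimal separation while the singleton masses absorb the renormalization, and the same reduction of conformal covariance to a one-point statement. The only substantive difference is that where the paper cites \cite[p. 498]{cgk16} for the estimate $\alpha_{\delta, D'}(z'_j, S'_0)-\alpha_{\delta, D}(z_j, S_0) = \frac{1}{5}\log|f'(z_j)| - o(1)$, you prove the equivalent transformation law $\Theta_{D'}(f(z))=\Theta_D(z)+\tfrac{1}{5}\log|f'(z)|$ directly by the diameter-distortion argument; this is sound (since a loop covering $z'$ has all its points within $d(\gamma')$ of $z'$, the pointwise differentiability bound does give the needed two-sided diameter estimate) and is the same argument underlying the cited reference.
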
	
	
    Let us now turn to the Gaussian layering fields $\tilde{W}^{\delta}_{\xi}$ with parameter $\xi$ and ultraviolet cutoffs $\delta>0$ defined in \eqref{eq:W_delta_renorm_def}. In the following proposition we collect various useful facts about these fields, including their one and two point functions. These are going to be useful when we prove that $\tilde{W}^{\delta}_{\xi}$ converges to a certain random measure $W_{\xi}$ in the ultraviolet limit (Theorem \ref{thm:main_2}). Although the computations and the proofs of these results are routine, for the sake of completeness we have included them in Appendix \ref{app:proof_prop_GLF_facts}.     
    
	\begin{prop}\label{prop:GLF_facts}
		Let $\xi > 0$ be such that $\Delta_{\xi} = \xi^2/20 < 1/4$. Then, 
		\begin{itemize}
			\item[(i)] For every $z \in D$, as $\delta \to 0$, we have the convergence,
			\begin{align} \label{eq:W_1_pt_1}
				\E[\tilde{W}^{\delta}_{\xi}(z)] = \delta^{2\Delta_{\xi}} e^{\frac{\xi^2}{2} \alpha_{\delta, D} (z) }\to d_z^{2\Delta_{\xi} } e^{\frac{\xi^2}{2} \alpha_{d_z, D}(z) } = : \langle W_{\xi}(z)\rangle.
			\end{align}  

            \item[(ii)] For $z, w \in D$, 
            we have
            \begin{align}\label{eq:W_2_pt}
                \lim_{\delta, \delta' \to 0} \E (\tilde{W}^{\delta}_{\xi}(z) \tilde{W}^{\delta'}_{\xi}(w)) = &(d_{z,w}d_{w,z})^{2\Delta_{\xi}} e^{\frac{\xi^2}{2}  \alpha_{d_{z,w},D}(z) } e^{\frac{\xi^2}{2}  \alpha_{d_{w,z}, D}(w)} e^{\xi^2  \alpha_{|z-w|,D}(z,w)}
			\nonumber \\
			= :& \langle W_{\xi}(z) W_{\xi}(w)\rangle.
            \end{align}
            \item[(iii)] As $\delta, \delta' \to 0$ we have,
            \begin{align}\label{eq:L^1_conv_W_0}
                \int_{D\x D} \, dz \, dw \, |\E (\tilde{W}^{\delta}_{\xi}(z) \tilde{W}^{\delta'}_{\xi}(w)) - \langle W_{\xi}(z) W_{\xi}(w)\rangle | \to 0.
            \end{align}
          \end{itemize}		
    \end{prop}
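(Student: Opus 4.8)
\emph{Proof strategy for Proposition~\ref{prop:GLF_facts}.} Everything reduces to the Gaussian moment generating function together with the exact mass computation \eqref{eq:mu_size_uv_ir}. Recall that $G_0$ is a white noise on $S(D)$ with control $\mu_D$, so $G_0(A_{\delta,D}(z))$ is centered Gaussian with variance $\mu_D(A_{\delta,D}(z))=\alpha_{\delta,D}(z)<\infty$, and $\big(G_0(A_{\delta,D}(z)),G_0(A_{\delta',D}(w))\big)$ is jointly centered Gaussian with covariance $\mu_D\big(A_{\delta,D}(z)\cap A_{\delta',D}(w)\big)$. For (i), the identity $\E e^{\xi Z}=e^{\xi^2\sigma^2/2}$ for $Z\sim N(0,\sigma^2)$ gives $\E[\tilde W^{\delta}_{\xi}(z)]=\delta^{2\Delta_{\xi}}e^{\frac{\xi^2}{2}\alpha_{\delta,D}(z)}$. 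For $\delta<d_z$ one has $B(z,d_z)\subset D$, so splitting the loops through $z$ into those of diameter in $(\delta,d_z)$ and those of diameter $\ge d_z$ and applying \eqref{eq:mu_size_uv_ir} with $R=d_z$ gives $\alpha_{\delta,D}(z)=\frac15\log(d_z/\delta)+\alpha_{d_z,D}(z)$; substituting and using $2\Delta_{\xi}=\xi^2/10$, all powers of $\delta$ cancel, so $\E[\tilde W^{\delta}_{\xi}(z)]=d_z^{2\Delta_{\xi}}e^{\frac{\xi^2}{2}\alpha_{d_z,D}(z)}$ for every $\delta<d_z$, which is \eqref{eq:W_1_pt_1}.

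For (ii), the joint Gaussian computation yields $\E\big[\tilde W^{\delta}_{\xi}(z)\tilde W^{\delta'}_{\xi}(w)\big]=(\delta\delta')^{2\Delta_{\xi}}\exp\big(\frac{\xi^2}{2}[\alpha_{\delta,D}(z)+\alpha_{\delta',D}(w)+2\mu_D(A_{\delta,D}(z)\cap A_{\delta',D}(w))]\big)$. Any loop containing both $z$ and $w$ has diameter at least $|z-w|$, so for $\delta,\delta'\le|z-w|$ the intersection set equals $A_{|z-w|,D}(z,w)$, of mass $\alpha_{|z-w|,D}(z,w)$. When also $\delta<d_{z,w}=|z-w|\wedge d_z$ and $\delta'<d_{w,z}=|z-w|\wedge d_w$, the same splitting as above (with $R=d_{z,w}$, resp.\ $R=d_{w,z}$) gives $\alpha_{\delta,D}(z)=\frac15\log(d_{z,w}/\delta)+\alpha_{d_{z,w},D}(z)$ and the analogous identity at $w$; plugging in and using $2\Delta_{\xi}=\xi^2/10$ cancels the cutoffs and produces exactly the right-hand side of \eqref{eq:W_2_pt}, for all such $\delta,\delta'$.

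For (iii), part (ii) shows that for fixed distinct $z,w$ the integrand of \eqref{eq:L^1_conv_W_0} is identically $0$ once $\delta,\delta'<|z-w|\wedge d_z\wedge d_w$, hence it converges to $0$ pointwise on $\{z\ne w\}$, which has full Lebesgue measure in $D\times D$. To interchange limit and integral I use dominated convergence: bounding $\alpha_{\delta,D}(z)\le\frac15\log(d(D)/\delta)$ and $\alpha_{\delta',D}(w)\le\frac15\log(d(D)/\delta')$ by \eqref{eq:mu_D_size}, and $\mu_D(A_{\delta,D}(z)\cap A_{\delta',D}(w))\le\alpha_{|z-w|,D}(z,w)\le\frac15\log(d(D)/|z-w|)$ (the last step because such a loop is, viewed in $\C$, a loop through $z$ of diameter between $|z-w|$ and $d(D)$, so \eqref{eq:mu_size_uv_ir} applies), the formula of (ii) gives $\E\big[\tilde W^{\delta}_{\xi}(z)\tilde W^{\delta'}_{\xi}(w)\big]\le C_D\,|z-w|^{-\xi^2/5}$ for all small $\delta,\delta'$, and the same majorant bounds $\langle W_\xi(z)W_\xi(w)\rangle$. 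Since $\Delta_{\xi}<1/4$ forces $\xi^2/5<1<2$, the function $(z,w)\mapsto|z-w|^{-\xi^2/5}$ is integrable on $D\times D$, and \eqref{eq:L^1_conv_W_0} follows.

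The proof involves no real obstacle; the only points needing care are tracking the ranges of $\delta$ on which the exact formulas hold, noticing that the powers of $\delta$ cancel precisely because $2\Delta_{\xi}=\xi^2/10$, and, in (iii), making the majorant uniform in the cutoffs near the diagonal, which in the end rests on the hypothesis $\Delta_{\xi}<1/4$. This mirrors the Poisson two-point estimate (Lemma~\ref{lem:L^1_conv}) and the analogous results in \cite{cgpr21,vcl18}.
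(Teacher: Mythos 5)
Your proofs of (i) and (ii) are essentially the paper's: Gaussian moment generating function to get the exact formula, then the exact mass identity $\alpha_{\delta,D}(z)=\tfrac15\log(d_z/\delta)+\alpha_{d_z,D}(z)$ (resp.\ with $R=d_{z,w}$, $R=d_{w,z}$) so that the powers of $\delta,\delta'$ cancel against $\delta^{2\Delta_\xi}$ because $2\Delta_\xi=\xi^2/10$.

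For (iii) you take a genuinely simpler route than the paper. The paper bounds the two-point function by $d(D)^{4\Delta_\xi}\exp\{\xi^2(\alpha_{d_{z,w},D}(z)+\alpha_{d_{w,z},D}(w))\}$ (using $\alpha_{|z-w|}(z,w)\le\alpha_{d_{z,w}}(z)\wedge\alpha_{d_{w,z}}(w)\le\tfrac12(\alpha_{d_{z,w}}(z)+\alpha_{d_{w,z}}(w))$), a majorant which is singular near both the diagonal and $\partial D$, and then invokes the $\Lambda_1,\Lambda_2,\Lambda_3$ decomposition, the Koebe $1/4$ theorem and the boundary regularity of the Riemann map from the proof of Lemma~\ref{lem:L^1_conv} to show integrability under $\Delta_\xi<1/4$. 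You instead observe that the exact Gaussian formula $(\delta\delta')^{2\Delta_\xi}\exp\{\tfrac{\xi^2}{2}[\alpha_\delta(z)+\alpha_{\delta'}(w)+2\mu_D(A_\delta(z)\cap A_{\delta'}(w))]\}$ holds for \emph{all} cutoffs, and after inserting the crude bounds $\alpha_{\delta,D}(z)\le\tfrac15\log(d(D)/\delta)$, $\alpha_{|z-w|,D}(z,w)\le\tfrac15\log(d(D)/|z-w|)$, the $\delta$-, $\delta'$-powers cancel identically, leaving the $\delta$-uniform majorant $d(D)^{2\xi^2/5}|z-w|^{-\xi^2/5}$. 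This has a singularity only on the diagonal; its integrability ($\xi^2/5<2$) is a one-line polar-coordinates computation, and is in fact valid for $\Delta_\xi<1/2$ rather than $\Delta_\xi<1/4$. So your argument is both more elementary (no conformal-map machinery) and slightly more general than the one the paper uses, at the cost of a less precise description of the boundary decay, which is irrelevant for \eqref{eq:L^1_conv_W_0}. The argument is correct.
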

	
	\subsection{Proof of Theorem \ref{thm:main_1}}\label{sec:PLF_exists}
	
	Here we will show that the layering fields $\tilde{V}^{\delta}_{\beta}$, with the ultraviolet cutoff $\delta>0$, converge to a random measure $V_{\beta}$ as $\delta \to 0$. But first, we record the simple fact that, for every measurable subset $B$ of $D$, as $\delta\to 0$ the random variables $\tilde{V}^{\delta}_{\beta}(B)$ converge in mean. 
	
	\begin{lem}\label{lem:mean_conv}
		For all $B \in \mc{B}(D)$, 
		\begin{align}\label{eq:mean_conv}
			\lim_{\delta \to 0} \E (\tilde{V}^{\delta}_{\beta}(B)) = \int_B d_z^{2\Delta(\beta)} e^{10\Delta(\beta) \alpha_{d_z, D}(z)} \, dz <\infty.
		\end{align} 
	\end{lem}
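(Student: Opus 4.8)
The plan is to establish the convergence $\E(\tilde V^\delta_\beta(B)) \to \int_B d_z^{2\Delta(\beta)} e^{10\Delta(\beta)\alpha_{d_z,D}(z)}\,dz$ by combining the pointwise convergence from Lemma \ref{lem:1,2point}(ii) with a dominated-convergence argument, and then separately verifying finiteness of the limiting integral. First I would write, using \eqref{eq:real_LF_normalized} and Fubini--Tonelli (the integrand is nonnegative),
\begin{align*}
\E(\tilde V^\delta_\beta(B)) = \int_B \E(\tilde V^\delta_\beta(z))\,dz = \int_B \delta^{2\Delta(\beta)} e^{10\Delta(\beta)\alpha_{\delta,D}(z)}\,dz,
\end{align*}
where the last equality is the one-point formula \eqref{eq:1_point_0}. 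For each fixed $z\in D$, Lemma \ref{lem:1,2point}(ii) gives $\delta^{2\Delta(\beta)} e^{10\Delta(\beta)\alpha_{\delta,D}(z)} \to d_z^{2\Delta(\beta)} e^{10\Delta(\beta)\alpha_{d_z,D}(z)} = \langle V_\beta(z)\rangle_D$ as $\delta\to 0$.

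The main work is to produce an integrable ($z$-dependent, $\delta$-uniform) dominating function so that dominated convergence applies. Here I would use the bound \eqref{eq:mu_D_size}: for $\delta$ small, $\alpha_{\delta,D}(z) \le \tfrac15\log(d(D)/\delta)$, hence $\delta^{2\Delta(\beta)}e^{10\Delta(\beta)\alpha_{\delta,D}(z)} \le \delta^{2\Delta(\beta)} (d(D)/\delta)^{2\Delta(\beta)} = d(D)^{2\Delta(\beta)}$, a constant independent of $z$ and $\delta$. Since $\Delta(\beta)\ge 0$ always and $D$ is bounded, this constant is integrable over $B\subseteq D$. (One should note that \eqref{eq:mu_D_size} is stated for all sufficiently small $\delta$; since we are taking $\delta\to 0$ this is harmless, but for completeness one can absorb the finitely many larger values of $\delta$ by monotonicity of $\alpha_{\delta,D}(z)$ in $\delta$, or simply restrict to $\delta < d(D)$.) Dominated convergence then yields $\E(\tilde V^\delta_\beta(B)) \to \int_B \langle V_\beta(z)\rangle_D\,dz$.

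Finally I would verify the limit is finite. By the same bound, $\langle V_\beta(z)\rangle_D = d_z^{2\Delta(\beta)}e^{10\Delta(\beta)\alpha_{d_z,D}(z)} \le d(D)^{2\Delta(\beta)}$ for every $z$ — to apply \eqref{eq:mu_D_size} with $\delta = d_z$ one uses that $d_z \le d(D)$ so that the displayed inequality holds; alternatively one invokes Proposition \ref{prop:alpha_near_bdry}(iii), which guarantees $\alpha_{d_z,D}(z)$ stays bounded (and in fact bounded away from $0$ and $\infty$) up to the boundary, so $\langle V_\beta(\cdot)\rangle_D$ extends to a bounded function on $\bar D$. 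Integrating the constant bound over the bounded set $B$ gives finiteness, completing the proof. I do not anticipate a genuine obstacle here; the only point requiring mild care is the ``sufficiently small $\delta$'' clause attached to \eqref{eq:mu_D_size}, which is dispatched by monotonicity of $\delta\mapsto\alpha_{\delta,D}(z)$.
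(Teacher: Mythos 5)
Your proof is correct and follows essentially the same approach as the paper: both arguments reduce to the uniform-in-$(z,\delta)$ bound $\delta^{2\Delta(\beta)}e^{10\Delta(\beta)\alpha_{\delta,D}(z)}\le d(D)^{2\Delta(\beta)}$ coming from \eqref{eq:mu_D_size}. The only cosmetic difference is that you package this as dominated convergence with the constant dominator, whereas the paper splits the integral over $B\cap\{d_z>\delta\}$ (where the renormalized one-point function already equals its limit exactly) and $B\cap\{d_z\le\delta\}$, and bounds the latter contribution by $d(D)^{2\Delta(\beta)}\,|B\cap\{d_z\le\delta\}|\to 0$.
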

	
	\begin{proof}
		By Fubini's theorem, \eqref{eq:1_point_0} and \eqref{eq:conv_1_pt_0} we have, for all $\delta>0$, 
		\begin{align}
			\E (\tilde{V}^{\delta}_{\beta}(B)) 
			=  \int_{B \cap \{d_z > \delta\}} d_z^{2\Delta(\beta)} e^{10 \Delta(\beta) \alpha_{d_z, D}(z)} dz + \int_{B \cap \{d_z \le \delta\}} \delta^{2\Delta(\beta)} e^{10 \Delta(\beta) \alpha_{\delta, D}(z)} dz.
		\end{align}
		Thus, using \eqref{eq:mu_D_size} we get,
		\begin{align}
			\left| \E (\tilde{V}^{\delta}_{\beta}(B)) - \int_{B \cap \{d_z > \delta\}} d_z^{2\Delta(\beta)} e^{10 \Delta(\beta) \alpha_{d_z, D}(z)} dz \right| 			
			\le & \int_{B \cap \{d_z \le \delta\}} \delta^{2\Delta(\beta)} e^{10 \Delta(\beta) \alpha_{\delta, D}(z)} dz
			\nonumber \\
			\le & \int_{B \cap \{d_z \le \delta\}} \delta^{2\Delta(\beta)} e^{10 \Delta(\beta) \frac{1}{5} \log \frac{d(D)}{\delta}} dz
			\nonumber \\
			= & d(D)^{2\Delta(\beta)} |B \cap \{ z \in D \mid d_z \le \delta\}|.
		\end{align}
		As $\delta \to 0$ the r.h.s. in the above converges to $0$. This gives the expression for the limit in \eqref{eq:mean_conv}. It is easy to see that it is finite, since $D$ is bounded and for $B \in \mc{B}(D)$ we have
		\begin{align}
			\int_B d_z^{2\Delta(\beta)} e^{10\Delta(\beta) \alpha_{d_z, D}(z)} \, dz \le \int_B d_z^{2\Delta(\beta)} e^{10\Delta(\beta) \frac{1}{5} \log \frac{d(D)}{d_z}} \, dz = d(D)^{2\Delta(\beta)} |B| < \infty,
		\end{align}
         again using \eqref{eq:mu_D_size} to get the upper bound.
	\end{proof}

    The next proposition finishes the proof of Theorem \ref{thm:main_1}.     
    
	\begin{prop}\label{prop:layering_field_conv}
		Let $D \subset \C$ be a bounded simply connected domain with smooth boundary. Also suppose $\beta \in \R$ is such that $\Delta(2\beta) < 1$. Then, for all $B \in \mc{B}(D)$,
		\begin{align}\label{eq:layering_field_conv_0}
			\lim_{\delta \to 0}\tilde{V}^{\delta}_{\beta}(B) =: V_{\beta}(B) 
		\end{align}
		exists in $L^2(\Omega)$ and is finite. Moreover, the above defines a random element $V_{\beta} = V_{\beta, \lambda, D}$ in $M_+(D)$, such that in probability 
		\begin{align}\label{eq:layering_field_conv_1}
			\tilde{V}^{\delta}_{\beta} \to V_{\beta} \text{, as } \delta \to 0
		\end{align}
		as elements of $M_+(D)$ (i.e. in the topology of weak convergence).
	\end{prop}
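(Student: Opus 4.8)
The plan is to establish \eqref{eq:layering_field_conv_0} via an $L^2$ Cauchy argument for each fixed Borel set $B$, and then upgrade this to convergence of the measures in $M_+(D)$ by a standard separability/monotone-class argument. First I would fix $B \in \mc{B}(D)$ and consider the net $\{\tilde V^\delta_\beta(B)\}_{\delta>0}$. Writing $\tilde V^\delta_\beta(B) = \int_B \tilde V^\delta_\beta(z)\,dz$, the second moment is
\begin{align}
\E\!\left(\tilde V^\delta_\beta(B)\,\tilde V^{\delta'}_\beta(B)\right) = \int_B\int_B \E\!\left(\tilde V^\delta_\beta(z)\,\tilde V^{\delta'}_\beta(w)\right)\,dz\,dw,
\end{align}
and by Lemma \ref{lem:L^1_conv} (which applies precisely because $\Delta(2\beta)<1$) this converges, as $\delta,\delta'\to 0$, to $\int_B\int_B \langle V_\beta(z)V_\beta(w)\rangle\,dz\,dw$, a finite limit not depending on how $\delta,\delta'\to 0$. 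Combined with Lemma \ref{lem:mean_conv}, which gives convergence of $\E(\tilde V^\delta_\beta(B))$, an elementary computation shows $\E\big((\tilde V^\delta_\beta(B)-\tilde V^{\delta'}_\beta(B))^2\big)\to 0$ as $\delta,\delta'\to 0$; hence the net is Cauchy in $L^2(\Omega)$ and converges to a limit $V_\beta(B)\in L^2(\Omega)$. (One should take $\delta\to 0$ along a sequence, e.g.\ $\delta_n = 2^{-n}$, to stay within sequential completeness of $L^2$; the limit is independent of the sequence by the same second-moment identity.) Nonnegativity of $V_\beta(B)$ is immediate since each $\tilde V^\delta_\beta(B)\ge 0$, and $\E(V_\beta(B))$ equals the finite quantity in \eqref{eq:mean_conv}.

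Next I would verify that $B\mapsto V_\beta(B)$ is (a version of) a measure. Finite additivity is inherited from $L^2$-limits: for disjoint $B_1,B_2$ we have $\tilde V^\delta_\beta(B_1\cup B_2)=\tilde V^\delta_\beta(B_1)+\tilde V^\delta_\beta(B_2)$ for every $\delta$, so passing to the $L^2$-limit gives $V_\beta(B_1\cup B_2)=V_\beta(B_1)+V_\beta(B_2)$ a.s. For countable additivity it suffices to check continuity from above at $\emptyset$: if $B_n\downarrow\emptyset$ then $\E(V_\beta(B_n))=\int_{B_n} d_z^{2\Delta(\beta)}e^{10\Delta(\beta)\alpha_{d_z,D}(z)}\,dz\to 0$ by dominated convergence (the integrand is dominated by $d(D)^{2\Delta(\beta)}$ on $D$ by \eqref{eq:mu_D_size}), and since $V_\beta(B_n)\ge 0$ this forces $V_\beta(B_n)\to 0$ in $L^1$, hence along a subsequence a.s.; monotonicity of $n\mapsto V_\beta(B_n)$ then yields the full a.s.\ limit. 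To get a genuine random measure rather than merely a finitely-additive set function defined up to null sets, I would invoke the standard construction: define $V_\beta$ first on a countable generating algebra (e.g.\ finite unions of dyadic boxes) where only countably many null sets are involved, check it is a.s.\ a premeasure there using the monotone-convergence argument above plus the fact that $\E(V_\beta(\cdot))$ is a finite measure absolutely continuous w.r.t.\ Lebesgue, and extend via Carathéodory. Because the candidate intensity measure $\E(V_\beta(dz))=\langle V_\beta(z)\rangle\,dz$ is finite and nonatomic, tightness on the Polish space $M_+(D)$ is not an issue.

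Finally I would establish the convergence $\tilde V^\delta_\beta\to V_\beta$ in $M_+(D)$ in probability. Fix $\varphi\in C_b(D)$; by linearity and an approximation of $\varphi$ by simple functions (uniformly, using boundedness of $\varphi$ and finiteness of the intensity measure), the $L^2$-convergence $\tilde V^\delta_\beta(B)\to V_\beta(B)$ for boxes $B$ upgrades to $\tilde V^\delta_\beta(\varphi)\to V_\beta(\varphi)$ in $L^2(\Omega)$, in particular in probability. Since $M_+(D)$ with the weak topology is metrizable and its Borel $\sigma$-algebra is generated by the evaluation maps $\nu\mapsto\nu(\varphi)$ for $\varphi$ ranging over a countable convergence-determining family $\{\varphi_k\}\subset C_b(D)$, having $\tilde V^\delta_\beta(\varphi_k)\to V_\beta(\varphi_k)$ in probability simultaneously for all $k$ (along a fixed sequence $\delta_n\to 0$, diagonalizing to extract a.s.\ convergence along a subsequence) gives $\tilde V^\delta_\beta\to V_\beta$ weakly in probability; the subsequence principle then removes the need for a subsequence in the probability statement.

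The main obstacle I anticipate is the passage from ``$L^2$-consistent family of random variables indexed by sets'' to ``bona fide random measure'': one must be careful that the a.s.\ defining null sets do not accumulate uncountably, and that the limiting object is automatically $\sigma$-additive rather than merely finitely additive. This is handled cleanly by working on a countable generating algebra and exploiting that the first-moment measure is a finite, absolutely continuous (hence $\sigma$-finite, nonatomic) measure on $D$, so that the Daniell–Kolmogorov / Carathéodory extension applies without pathology; the $L^2$ bound from Lemma \ref{lem:L^1_conv} is exactly what makes the second-moment estimates uniform enough for this to go through. The restriction $\Delta(2\beta)<1$ enters only through Lemma \ref{lem:L^1_conv}, and nowhere else in the argument.
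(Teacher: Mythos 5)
Your overall plan is correct, and the first half matches the paper exactly: the $L^2$ Cauchy argument via Fubini and Lemma \ref{lem:L^1_conv}, and the identification of the first moment via Lemma \ref{lem:mean_conv}, are precisely what the paper does. The second half, however, takes a genuinely different route to promote the family $\{V_\beta(B)\}_B$ to an element of $M_+(D)$. The paper diagonalizes to get a.s.\ convergence along a sequence $\delta_k\downarrow 0$ on a countable $\pi$-system $\mc{R}$ of rational rectangles, establishes tightness of $\{\tilde V^{\delta_k}_\beta(\omega)\}_k$ by proving $V_\beta(D)=\sup_{B_i\in\mc{R}}V_\beta(\cup B_i)$ a.s.\ via the first-moment identity, and then invokes Prokhorov's theorem together with uniqueness of the limit point on $\mc{R}$ to produce the random measure and simultaneously extract the weak convergence claim. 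You instead build the random measure directly by Carath\'eodory extension from a countable algebra and then verify weak convergence in probability separately, via $L^2$-convergence of $\tilde V^\delta_\beta(\varphi)$ for a countable convergence-determining family of test functions. Both routes are legitimate; yours is a slightly more ``bottom-up'' measure-theoretic construction, while the paper's is more compactness-driven.

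One caution: in your Carath\'eodory step the continuity-from-above-at-$\emptyset$ check, as you phrased it, handles a \emph{fixed} decreasing sequence $B_n\downarrow\emptyset$ in the algebra (each producing its own null set), but a countable algebra still contains uncountably many such sequences, so this alone does not yield the premeasure property off a single null set. The clean fix is exactly what the paper does implicitly: fix an increasing exhaustion $D_n\uparrow D$ by finite unions of elements of the algebra and show $V_\beta(D_n)\uparrow V_\beta(D)$ a.s.\ (first moments converge by dominated convergence, monotonicity upgrades $L^1$ to a.s.), which is inner regularity/tightness and is what actually forces countable additivity. Since $D$ is open and bounded, mass could in principle escape to $\partial D$, so this tightness step is not a formality and should not be waved away with ``nonatomic intensity''; it is the same first-moment estimate from Lemma \ref{lem:mean_conv} that makes it go through, and you should cite it explicitly there rather than only for the means. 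With that filled in, your proposal closes correctly.
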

	
	\begin{proof}	
		   The first part of this proof borrows the arguments from \cite[Theorem 3.3]{cgpr21} and \cite[Theorem 5.1]{vcl18}. Fix a set $B \in \mc{B}(D)$ and note that for each fixed $\delta>0$, $\tilde{V}^{\delta}_{\beta}(B)$ has finite second moment. Let us prove that $\{\tilde{V}^{\delta}_{\beta}(B)\}_{\delta}$ is Cauchy in $L^2(\Omega)$. To this end, let $\delta, \delta'>0$ and note that we can apply Fubini's theorem to get,
		\begin{align}\label{eq:V_delta_cauchy_1}
			 \E \left[  (\tilde{V}^{\delta}_{\beta} (B) - \tilde{V}^{\delta'}_{\beta} (B) )^2\right]
			= & \E \left[ \int_D \int_D (\tilde{V}^{\delta}_{\beta}(z) - \tilde{V}^{\delta'}_{\beta}(z)) (\tilde{V}^{\delta}_{\beta}(w) - \tilde{V}^{\delta'}_{\beta}(w)) \ind_B(z) \ind_B(w)  \, dz  \, dw \right]
			\nonumber \\
			= &  \int_D \int_D \E [(\tilde{V}^{\delta}_{\beta}(z) - \tilde{V}^{\delta'}_{\beta}(z)) (\tilde{V}^{\delta}_{\beta}(w) - \tilde{V}^{\delta'}_{\beta}(w))] \ind_B(z) \ind_B(w)  \, dz  \, dw 
			\nonumber \\
			\le &    \int_D \int_D | \E [(\tilde{V}^{\delta}_{\beta}(z) - \tilde{V}^{\delta'}_{\beta}(z)) (\tilde{V}^{\delta}_{\beta}(w) - \tilde{V}^{\delta'}_{\beta}(w))] | \, dz  \, dw. 
		\end{align}
		As $\delta, \delta' \to 0$, by Lemma \ref{lem:L^1_conv}
		\begin{align}\label{eq:V_delta_cauchy_2}
			&\int_D \int_D |\E [(\tilde{V}^{\delta}_{\beta}(z) - \tilde{V}^{\delta'}_{\beta}(z)) (\tilde{V}^{\delta}_{\beta}(w) - \tilde{V}^{\delta'}_{\beta}(w))]| \, dz  \, dw
			\nonumber \\
			= &  \int_D \int_D | \E( \tilde{V}^{\delta}_{\beta}(z) \tilde{V}^{\delta}_{\beta}(w)) - \E(\tilde{V}^{\delta'}_{\beta}(z)\tilde{V}^{\delta}_{\beta}(w) )- \E(\tilde{V}^{\delta}_{\beta}(z)\tilde{V}^{\delta'}_{\beta}(w)) + \E(\tilde{V}^{\delta'}_{\beta}(z) \tilde{V}^{\delta'}_{\beta}(w))  | \, dz  \, dw
			\nonumber \\
			\le & \int_D \int_D \left[ | \E( \tilde{V}^{\delta}_{\beta}(z) \tilde{V}^{\delta}_{\beta}(w)) - V_{\beta}(z,w)| + |\E(\tilde{V}^{\delta'}_{\beta}(z)\tilde{V}^{\delta}_{\beta}(w) ) - V_{\beta}(z,w) | \right.
			\nonumber \\
			& \hspace{3cm} \left. + | \E(\tilde{V}^{\delta}_{\beta}(z)\tilde{V}^{\delta'}_{\beta}(w))  - V_{\beta}(z,w) | + |\E(\tilde{V}^{\delta'}_{\beta}(z) \tilde{V}^{\delta'}_{\beta}(w)) - V_{\beta}(z,w) | \right]\, dz  \, dw 
			\nonumber \\
			\to & 0.
		\end{align}	
		Since $L^2(\Omega)$ is complete, there is a random variable $V_{\beta}(B)$ satisfying \eqref{eq:layering_field_conv_0}. From Lemma \ref{lem:mean_conv} we can observe that this random variable has the mean:
		\begin{align}\label{eq:layering_field_conv_3}
			\E (V_{\beta}(B)) = \int_B d_z^{2\Delta(\beta)} e^{10\Delta(\beta) \alpha_{d_z, D}(z)} \, dz.
		\end{align}

		We next want to show that the limit in \eqref{eq:layering_field_conv_0} uniquely defines an element of $M_{+}(D)$ and that \eqref{eq:layering_field_conv_1} holds in $M_+(D)$. Although this is standard in the literature, 
        for the sake of completeness we include a detailed proof by adapting the argument from \cite[Section 6]{bersetycki17}. Let us define the following $\pi$-system of Borel subsets of $D$,
		\begin{align}
			\mc{R} = \{ (a_1, b_1] \x (a_2, b_2], [a_1, b_1) \x [a_2, b_2) \subset D \mid a_i< b_i \text{ and } a_i, b_i \in \Q \text{ for } i =1,2\} 
		\end{align} 
		and note that $\sigma(\mc{R}) = \mc{B}(D)$. We now claim that, almost surely	\begin{align}\label{eq:layering_field_conv_7}
			V_{\beta}(D) = \sup\{ V_{\beta}(\cup_{i=1}^k B_i ) \mid B_1, \ldots, B_k \in \mc{R} \text{ disjoint}\}.
		\end{align}
		Note that, for every $\delta>0$, the measure $\tilde{V}^{\delta}_{\beta}$ is monotone: if $B \subset C$ then $\tilde{V}^{\delta}_{\beta}(B) \le \tilde{V}^{\delta}_{\beta}(C)$. Therefore, so is $V_{\beta}$. Hence the above LHS is greater than the r.h.s. a.s. Let ($D_n$, $n \ge 1$) be an increasing sequence of subsets of $D$ such that $D = \cup_n D_n$ and that each $D_n$ is a finite union of disjoint members of $\mc{R}$. Using \eqref{eq:layering_field_conv_3} and the monotone convergence theorem we obtain
		\begin{align}
			\E (V_{\beta}(D_n)) \to \E (V_{\beta}(D)), \text{ as } n \to \infty.
		\end{align}
		This shows that both sides of \eqref{eq:layering_field_conv_7} are equal a.s.
		
		Since $\mc{R}$ is countable and \eqref{eq:layering_field_conv_0} holds for each $B \in \mc{R}$, by a diagonalization argument we can find a sequence $\delta_k \downarrow 0$ such that, for all $B \in \mc{R}$,
		\begin{align}\label{eq:layering_field_conv_5}
			\tilde{V}^{\delta_k}_{\beta} (B) \to V_{\beta}(B), \text{ as } k \to \infty, 
		\end{align}
		outside of a null event $\mc{N} \subset \Omega$. The r.h.s. of the above can be extended additively: if $B_1, B_2 \in \mc{R}$ are disjoint then we let $V_{\beta}(B_1\cup B_2) = V_{\beta}(B_1) + V_{\beta}(B_2)$. Also we can assume without loss of generality that $\tilde{V}^{\delta_k}_{\beta}(D) \to V_{\beta}(D)$ on $\Omega\setminus \mc{N}$. 		
		
		Let us use this to show that for every $\omega \in \Omega\setminus \mc{N}$ the family $\{\tilde{V}^{\delta_k}_{\beta}(\omega)\}_{k \in \N}$ is tight in the space $M_{+}(D)$. Fix a $\omega \in \Omega\setminus \mc{N}$. We know that \eqref{eq:layering_field_conv_5} holds for this $\omega$ and in particular, this implies that $V_{\beta}(\omega)(D)  < \infty$. By \eqref{eq:layering_field_conv_7}, for a given $\epsilon>0$ we can find a set $D' = D'(\omega, \epsilon)$ which can be written as a finite union of sets belonging to $\mc{R}$, such that $\bar{D'} \subset D$ and which has the property that,
		\begin{align}
			V_{\beta}(\omega) (D \setminus D') : = V_{\beta}(\omega)(D) - V_{\beta}(\omega)(D')< \epsilon.
		\end{align}
		By our assumptions, $\tilde{V}^{\delta_k}_{\beta}(\omega)(D') \to V_{\beta}(\omega)(D')$ as $k \to \infty$. Therefore we can find $K = K (\omega, \epsilon) \in \N$ such that 
		\begin{align}
			\sup_{k \ge K} \tilde{V}^{\delta_k}_{\beta}(\omega) (D\setminus D')  \le \epsilon.
		\end{align}
		This proves tightness of $\{\tilde{V}^{\delta_k}_{\beta}(\omega)\}_{k}$.
		
		Because of tightness and the fact that $V_{\beta}(\omega)(D) < \infty$, by Prokhorov's theorem (cf. \cite[Theorem 8.6.2]{bogachev07}) we can find limit points of $\{ \tilde{V}^{\delta_k}_{\beta}(\omega)\}_k$ in the weak convergence topology. Suppose $V'_{\beta}(\omega)$ and $V''_{\beta}(\omega)$ are two such limit points. Note that we clearly have, $V'_{\beta}(\omega)(B) = V''_{\beta}(\omega)(B)$ for all $B \in \mc{R} $ by construction. Since $\mc{R}$ generates the Borel $\sigma$-algebra on $D$, we have $V'_{\beta}(\omega) = V''_{\beta}(\omega)$. We shall denote this limit by $V_{\beta}(\omega)$. Clearly, $V_{\beta}$ is a $M_+(D)$-valued random variable as each $\tilde{V}^{\delta_k}_{\beta}$ is so. This, together with \eqref{eq:layering_field_conv_0}, proves the assertion \eqref{eq:layering_field_conv_1}. 	
	\end{proof}

    \subsection{Proof of Theorem \ref{thm:main_2}}\label{sec:GLF_exists}

    (1) Fix $\varphi \in C_b(D)$. We want to show that 
    \begin{align}\label{eq:W_conv_0}
			W_{\xi,D}(\varphi) = \lim_{\delta \to 0} \tilde{W}^{\delta}_{\xi, D} (\varphi) \text{ in } L^2(\Omega).
    \end{align}	
	 For this, it is enough to prove that $\{\tilde{W}^{\delta}_{\xi}(\varphi)\}_{\delta>0}$ is a Cauchy sequence in $L^2(\Omega)$. By \eqref{eq:L^1_conv_W_0} we have,  
		\begin{align}\label{eq:W_conv_8}
			\E \left[(\tilde{W}^{\delta}_{\xi} (\varphi) - \tilde{W}^{\delta'}_{\xi}(\varphi))^2\right]
			= &  \E \left[ \int_D \int_D (\tilde{W}^{\delta}_{\xi}(z) - \tilde{W}^{\delta'}_{\xi}(z)) (\tilde{W}^{\delta}_{\xi}(w) - \tilde{W}^{\delta'}_{\xi}(w)) \varphi(z)\varphi(w) \, dz \, dw \right]
			\nonumber \\
			\le & \lVert \varphi \rVert_{\infty}^2 \int_D \int_D |\E[(\tilde{W}^{\delta}_{\xi}(z) - \tilde{W}^{\delta'}_{\xi}(z)) (\tilde{W}^{\delta}_{\xi}(w) - \tilde{W}^{\delta'}_{\xi}(w))] |  \, dz \, dw 
			\nonumber \\
			\to & 0
		\end{align}
		as $\delta, \delta' \to 0$. This proves \eqref{eq:W_conv_0}. The rest of the proof is similar to that of Theorem \ref{thm:main_1}.

    (2) Let us define 
			\begin{align}\label{eq:M_approx}
				M^{\delta}_{\xi, D}(dz) := e^{\xi G(h^{\delta}_z) - \frac{\xi^2}{2} \E(G(h^{\delta}_z)^2)} \, dz
			\end{align} 
        and note here that for every $\delta>0$, by the definition of $W^{\delta}_{\xi}$ in \eqref{eq:W_delta_def} and that of $M^{\delta}_{\xi} = M^{\delta}_{\xi, D}$ above, we have
		\begin{align}\label{eq:W_M_1}
			\tilde{W}^{\delta}_{\xi}(z) =   e^{\xi G(h^{\delta}_z) - \frac{\xi^2}{2}\E(G(h^{\delta}_z)^2)} e^{2\Delta_{\xi} \log \delta + \frac{\xi^2}{2} \E(G(h^{\delta}_z)^2) }
			=  M^{\delta}_{\xi} (z) e^{2\Delta_{\xi} \log \delta + \frac{\xi^2}{2} \E(G(h^{\delta}_z)^2) }.
		\end{align}
		
		We first claim that $M^{\delta}_{\xi}$ converges to a non-degenerate Gaussian multiplicative chaos $M_{\xi}$ on $D$ as $\delta \to 0$, by showing that the correlation kernel of the underlying Gaussian field is of $\sigma$-positive type (see \cite[p. 321]{rv14}). Let $z \in D$ be fixed. Since $A_{\frac{1}{k+1}, \frac{1}{k}, D}(z), k \ge 1$ are pairwise disjoint collections of loops and $A_{\frac{1}{N}, D}(z) = \cup_{k=1}^{N-1} A_{\frac{1}{k+1}, \frac{1}{k}, D}(z)$, we can rewrite the random variable $G_0(A_{\frac{1}{N}}(z))$ as the sum of independent random variables, as follows
		\begin{align}
			G_0(A_{\frac{1}{N}}(z)) = \sum_{k=1}^{N-1} G_0 (A_{\frac{1}{k+1}, \frac{1}{k}}(z)).
		\end{align}
		By our assumptions, $\{G_0 (A_{\frac{1}{k+1}, \frac{1}{k}}(z))\}_{z \in D}$ is a Gaussian process with correlations:
		\begin{align}
			\E [ G_0 (A_{\frac{1}{k+1}, \frac{1}{k}}(z)) G_0 (A_{\frac{1}{k+1}, \frac{1}{k}}(w)) ] = \alpha_{\frac{1}{k+1}, \frac{1}{k}, D}(z,w)
		\end{align}
		for $z, w \in D$. Clearly the above is non-negative. Thus, to prove our claim that $M^{\delta}_{\xi} \to M_{\xi}$, we only have to argue that $(z, w) \mapsto \alpha_{\frac{1}{k+1}, \frac{1}{k}, D}(z,w)$ is continuous for every $k$. It is easy to see that as $w\to z$ we have $\alpha_{\frac{1}{k+1}, \frac{1}{k}, D}(z,w) \to \alpha_{\frac{1}{k+1}, \frac{1}{k}, D}(z)$, which is finite. And for $z \neq w$, the continuity follows from Lemma \ref{lem:alpha_cont}.
		
		It only remains to derive the form of the Radon-Nikodym derivative of $W_{\xi}$ with respect to $M_{\xi}$. From \eqref{eq:W_M_1} we have, for every $\delta>0$
		\begin{align}
			\frac{d \tilde{W}^{\delta}_{\xi}}{d M^{\delta}_{\xi}}(z) = e^{2\Delta_{\xi} \log \delta + \frac{\xi^2}{2} \E(G(h^{\delta}_z)^2) } = e^{2\Delta_{\xi}\log \delta  + \frac{\xi^2}{2} \alpha_{\delta, D}(z) } = e^{\frac{\xi^2}{2}\Theta_{\delta, D}(z)}
		\end{align}
		Since for small enough $\delta$, i.e. when $\delta < d_z$, we 
		we have $\Theta_{\delta, D}(z) =  \frac{1}{5} \log \delta +  \alpha_{\delta, D}(z) = \frac{1}{5} \log \delta+ \frac{1}{5} \log d_z -  \frac{1}{5} \log \delta +  \alpha_{d_z, D}(z) 
		=\frac{1}{5} \log d_z +  \alpha_{d_z, D}(z) = \Theta_D(z)$,  \eqref{eq:W_M_main} follows.        
	
	\section{Convergence of the Poisson layering field to the Gaussian layering field}\label{sec:PLF_to_GLF}

    The purpose of this section is to prove Theorem \ref{thm:main_2}. The main tool for this proof is the Wiener-It\^{o} chaos expansion and it  follows a similar line of arguments from \cite{cgpr21}. As with the previous section, the proofs of most of the auxiliary results are postponed to Appendix \ref{app:proofs_PLF_to_GLF}. 
	
	\subsection{Chaos expansions}	
	
	We will need the following notations to precisely state the Wiener-It\^{o} chaos expansion for the layering fields $V_{\beta, \lambda}$ and $W_{\xi}$. From Section \ref{sec:prelim_layering}, recall that $S_{\pm} = S_{\pm} (D)$ denotes the collection of all marked simple loops in $D$. Let $\mc{N} = \mc{N}(S_{\pm}(D), \mc{S}_{\pm}(D))$ denote the collection of all counting measures (i.e. integer valued $\sigma$-finite measures) on the measurable space $S_{\pm}$ and let $\mc{B}(\mc{N})$ be the Borel $\sigma$-algebra on $\mc{N}$ with respect to the topology of weak convergence. Then clearly the PRM $N_{\lambda}$ with intensity $\nu_{\lambda} = \frac{1}{2}(\delta_{-1} + \delta_{+1} ) \otimes \lambda \mu_D$ is a $\mc{N}$ valued random variable. By $\pr_{\lambda}$ we denote the law of $N_{\lambda}$ and by $\hat{N}_{\lambda} = N_{\lambda} - \nu_{\lambda}$ we denote the compensated PRM.
 
    We state the chaos decomposition result below for a general member of $L^2(\mc{N}, \pr_{\lambda})$. For a measurable function $Y: (\mc{N}, \mc{B}(\mc{N}))\to \R$, let us define the difference operator $D_x$ for every $x \in S_{\pm}$ as
	\begin{align}
		(D_x Y ) (\eta) : = Y (\eta  + \delta_x) - Y (\eta), \quad \eta \in \mc{N}.
	\end{align}
	For $q \ge 2$ and $x_1, \ldots, x_q \in A$, one defines by recursion,
	\begin{align}
		(D^q_{x_1, \ldots, x_q} Y )(\eta) : =[ D_{x_1}(D^{q-1}_{x_2, \ldots, x_q} Y)] (\eta), \quad \eta \in \mc{N}.
	\end{align}
    With these notations we have,
       
	\begin{prop}[The general Wiener-It\^{o} chaos decomposition] \label{prop:wi_chaos_poisson}
		 Suppose $Y \in L^2(\pr_{\lambda})$ i.e. $\E[(Y(N_{\lambda}))^2] < \infty$. Then there exists a sequence $f_q \in L^2(S_{\pm}^q, \mc{S}_{\pm}^q, \nu_{\lambda}^q)$, $q \ge 1$, of symmetric functions such that the following series converges in $L^2(\Omega, \pr)$,
		\begin{align}\label{eq:chaos_exp_PRM_gen}
			Y(N_{\lambda}) = \E [Y(N_{\lambda})] + \sum_{q=1}^{\infty} I^{N_{\lambda}}_q (f_q),
		\end{align} 
		where, for every $q \ge 1$, $I^{N_{\lambda}}_q$ is the $q$-fold Wiener-It\^{o} integral defined as,
		\begin{align}\label{eq:chaos_exp_PRM_gen_1}
			I^{N_{\lambda}}_q (f_q) = \int_{S_{\pm}^q} \ind\{ x_k \neq x_l, \forall k \neq l\} f_q(x_1, \ldots, x_q) \hat{N}_{\lambda}(dx_1)\cdots\hat{N}_{\lambda}(dx_q).
		\end{align}
        Moreover, the kernels $f_q$ have the following explicit expression,
        \begin{align}\label{eq:fq_diff_op}
			f_q(x_1, \ldots, x_q) = \frac{1}{q!} \E [(D^q_{x_1, \ldots, x_q} Y) (N_{\lambda})]
		\end{align}
		where $x_1, \ldots, x_q \in A$.
	\end{prop}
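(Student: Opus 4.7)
The plan is to prove this via the Fock space isomorphism for the Poisson space $(\mc{N}, \pr_{\lambda})$, in the spirit of the approach of Last and Penrose. The argument splits into three steps: an isometry for multiple integrals, density of their linear span, and identification of the kernels via the difference operator.

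First I would verify the isometry property: for symmetric square-integrable kernels $f \in L^2(\nu_{\lambda}^p)$ and $g \in L^2(\nu_{\lambda}^q)$,
\begin{align*}
    \E[I_p^{N_{\lambda}}(f) \, I_q^{N_{\lambda}}(g)] = \delta_{p,q} \, q! \, \langle f, g \rangle_{L^2(\nu_{\lambda}^q)}.
\end{align*}
This is a direct computation from the definition \eqref{eq:chaos_exp_PRM_gen_1}: expand $\hat{N}_{\lambda} = N_{\lambda} - \nu_{\lambda}$ into $2^{p+q}$ terms and apply the Mecke (Campbell) formula for $N_{\lambda}$, noting that the off-diagonal indicator $\ind\{x_k \ne x_l\}$ prevents any diagonal correction terms from appearing. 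Consequently each $I_q^{N_{\lambda}}$ extends to a bounded map into $L^2(\pr_{\lambda})$, chaoses of different orders are mutually orthogonal, and $\E[I_q^{N_{\lambda}}(f)] = 0$ for $q \ge 1$.

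Next I would show that the constants together with the $L^2$-span of $\{I_q^{N_{\lambda}}(f_q) : q \ge 1,\, f_q \text{ symmetric}\}$ are dense in $L^2(\pr_{\lambda})$. A standard route is to fix a bounded measurable $h : S_{\pm}(D) \to (-1, \infty)$ of compact $\nu_{\lambda}$-support and verify by Campbell's formula that the stochastic exponential
\begin{align*}
    \exp\Big(-\int h \, d\nu_{\lambda}\Big) \prod_{x \in \supp N_{\lambda}} (1 + h(x))
\end{align*}
lies in $L^2(\pr_{\lambda})$ and, after expanding the product, admits the chaos expansion $\sum_{q \ge 0} \frac{1}{q!} I_q^{N_{\lambda}}(h^{\otimes q})$. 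A monotone-class / Laplace-transform argument, based on the fact that $\mc{B}(\mc{N})$ is generated by cylinders $\{N_{\lambda}(A_1) \in \cdot, \ldots, N_{\lambda}(A_m) \in \cdot\}$ for disjoint sets of finite $\nu_{\lambda}$-mass, shows these stochastic exponentials are total in $L^2(\pr_{\lambda})$. Combined with the isometry from the previous step, this yields existence of the decomposition \eqref{eq:chaos_exp_PRM_gen}.

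Formula \eqref{eq:fq_diff_op} is then obtained from the key algebraic identity $D_x I_q^{N_{\lambda}}(f) = q \, I_{q-1}^{N_{\lambda}}(f(x, \cdot))$ for symmetric $f$, which follows directly from the definitions of $D_x$ and $\hat{N}_{\lambda}$: adding $\delta_x$ to $\eta$ produces a finite sum of correction terms, but the off-diagonal constraint $\ind\{x_k \ne x_l\}$ kills every term in which the inserted point appears in two or more coordinates. Iterating, $D^q_{x_1, \ldots, x_q} I_q^{N_{\lambda}}(f_q) = q! \, f_q(x_1, \ldots, x_q)$, while $D^q$ annihilates $I_p^{N_{\lambda}}(f_p)$ for $p < q$ and leaves a centered stochastic integral of positive order for $p > q$. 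Taking $\E$ after applying $D^q$ to both sides of \eqref{eq:chaos_exp_PRM_gen} extracts \eqref{eq:fq_diff_op}. The main obstacle will be the density/totality step, where the Poisson-specific structure enters decisively; the isometry and the kernel identification are essentially algebraic manipulations once Mecke's formula is in hand. Since this is classical (Itô for existence, Last--Penrose for the difference-operator formula), the paper will likely cite the relevant literature and only verify applicability to the present marked-loop setting.
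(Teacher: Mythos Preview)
Your outline is correct and follows exactly the Last--Penrose approach, which is precisely what the paper relies on: the paper does not prove this proposition at all but simply refers the reader to Theorem 18.10 and Eq.~(18.8) of \cite{lp18}. Your closing prediction was accurate---indeed the paper cites the literature without even verifying applicability to the marked-loop setting, so your sketch provides strictly more detail than the paper itself.
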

 
    We refer the reader to Theorem 18.10 and Eq. (18.8) of \cite{lp18} for the proof of the above result.
    For us, $Y$ will have the following specific form. Suppose a measurable map $h : S_{\pm} \to \R$ and a parameter $\beta \in \R$ are fixed. Then we will assume throughout this section that,
	\begin{align}
		Y(\eta) = Y_{h, \beta} (\eta) := e^{\beta \eta (h)}, \quad \eta \in \mc{N}
	\end{align}
	whenever the integral $\eta(h) = \int_{S_{\pm}} h(x) \eta (dx)$ exists. The following result can be easily obtained by induction.
	\begin{lem}
		For $Y$ as above and $\eta(h) < \infty$, we have
		\begin{align}\label{eq:diff_op_exp}
			(D^q_{x_1, \ldots, x_q} Y ) (\eta) = e^{\beta \eta(h)} \prod_{i=1}^q (e^{\beta h(x_i)}-1) = Y(\eta) (e^{\beta h (\cdot)}-1)^{\otimes q} (x_1, \ldots, x_q),
		\end{align}
		where $q \ge 1$, $x_1, \ldots, x_q \in S_{\pm}$.
	\end{lem}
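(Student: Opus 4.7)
The plan is to prove this identity by induction on $q \ge 1$, which is essentially the only natural route given the recursive definition of $D^q$.

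For the base case $q=1$, I would simply unpack the definition of the difference operator: $(D_x Y)(\eta) = Y(\eta + \delta_x) - Y(\eta)$. Since $(\eta + \delta_x)(h) = \eta(h) + h(x)$ (using linearity of the integral against a measure, and the fact that $\eta(h)$ is assumed finite so the addition makes sense), we get
\begin{align*}
    (D_x Y)(\eta) = e^{\beta \eta(h) + \beta h(x)} - e^{\beta \eta(h)} = e^{\beta \eta(h)} (e^{\beta h(x)} - 1),
\end{align*}
which is the claimed formula for $q=1$.

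For the inductive step, assume the formula holds for $q-1$. Applying the recursive definition $(D^q_{x_1,\ldots,x_q} Y)(\eta) = D_{x_1}[(D^{q-1}_{x_2,\ldots,x_q} Y)](\eta)$, I substitute the inductive hypothesis for $D^{q-1}_{x_2,\ldots,x_q} Y$, which yields a function of $\eta$ of the form $Y(\eta) \cdot C$ where $C = \prod_{i=2}^q (e^{\beta h(x_i)} - 1)$ is a constant in $\eta$. Thus the outer operator $D_{x_1}$ only acts on the $Y(\eta)$ factor, producing $Y(\eta)(e^{\beta h(x_1)} - 1)$ by the base case. Multiplying by $C$ gives the asserted product over all $i=1,\ldots,q$.

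The only mildly delicate point is that the recursion requires $(D^{q-1}_{x_2,\ldots,x_q} Y)(\eta + \delta_{x_1})$ to be well-defined, which needs $(\eta + \delta_{x_1})(h) < \infty$; but this follows from the assumption $\eta(h) < \infty$ together with $|h(x_1)| < \infty$. So there is no real obstacle, and the proof is a direct induction with no analytic subtleties.
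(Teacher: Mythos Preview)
Your proof is correct and follows exactly the approach indicated by the paper, which simply states that the result ``can be easily obtained by induction.'' Your write-up supplies precisely that induction argument, including the minor well-definedness check, so there is nothing to add.
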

 
	Now observe that, by their definition (see \eqref{eq:locally_exploding_kernel}), $h^{\delta}_z \in L^1(\nu_{\lambda})$. Therefore $N_{\lambda}(h^{\delta}_z ) < \infty$ a.s. and the layering field of the loop soup at the point $z$ with cutoff $\delta >0$
	\begin{align}
		Y^{\delta}(N_{\lambda}) := Y_{h^{\delta}_z, \beta} (N_{\lambda}) = \exp\left( \beta N_{\lambda}(h^{\delta}_z)\right)  = e^{\beta N^{\delta}(z)} = V^{\delta}_{\beta}(z)
	\end{align} is defined a.s. The above two results now imply the following representation.		\begin{lem}\label{lem:chaos_decomp_layering_uv}	For $\delta>0$ and $z \in D$ we have,	\begin{align}\label{eq:chaos_decomp_layering_uv}
			V^{\delta}_{\beta}(z) = e^{\beta N_{\lambda}(h^{\delta}_z)} = e^{10\Delta(\beta) \alpha_{\delta, D}(z) } \left( 1 + \sum_{q=1}^{\infty} \frac{1}{q!} I_q^{N_{\lambda}} \left[(e^{\beta h^{\delta}_z(\cdot)}-1)^{\otimes q}\right]\right).
		\end{align}
	\end{lem}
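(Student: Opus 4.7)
The plan is to apply the general Wiener--It\^{o} chaos decomposition (Proposition \ref{prop:wi_chaos_poisson}) directly to the random variable $Y = Y_{h^{\delta}_z, \beta}$, and to identify the chaos kernels via the difference-operator formula \eqref{eq:fq_diff_op}. The key observation that makes the proof collapse into essentially a one-line computation is the factorization \eqref{eq:diff_op_exp}, which shows that $(D^q_{x_1,\ldots,x_q} Y)(\eta)$ equals $Y(\eta)$ times a purely deterministic tensor product in the variables $x_i$. Once this is in hand, taking expectations only affects the $Y(\eta)$ factor, and everything separates cleanly.

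First I would verify the $L^2(\pr_{\lambda})$ hypothesis of Proposition \ref{prop:wi_chaos_poisson}. From Lemma \ref{lem:1,2point}(i) applied with $2\beta$ in place of $\beta$, we have
\begin{align*}
    \E[Y(N_{\lambda})^2] = \E[V^{\delta}_{2\beta}(z)] = e^{\lambda \alpha_{\delta,D}(z)(\cosh(2\beta)-1)},
\end{align*}
which is finite because $\alpha_{\delta,D}(z) < \infty$ for $\delta>0$. The same lemma furnishes $\E[Y(N_{\lambda})] = e^{10\Delta(\beta)\alpha_{\delta,D}(z)}$, which will serve as the prefactor in \eqref{eq:chaos_decomp_layering_uv}. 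Note also that $N_{\lambda}(h^{\delta}_z) < \infty$ almost surely because $h^{\delta}_z$ is supported on the set $A_{\delta,D}(z)$ of finite $\nu_{\lambda}$-mass, so the formula \eqref{eq:diff_op_exp} is legitimately applicable.

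Next I would compute $f_q$. Taking expectation in \eqref{eq:diff_op_exp} with $h = h^{\delta}_z$, the deterministic product pulls out and gives
\begin{align*}
    f_q(x_1, \ldots, x_q) = \frac{1}{q!}\, \E[Y(N_{\lambda})] \,(e^{\beta h^{\delta}_z(\cdot)}-1)^{\otimes q}(x_1,\ldots,x_q).
\end{align*}
Symmetry is manifest from the tensor product form, and square-integrability against $\nu_{\lambda}^q$ is immediate since $|e^{\beta h^{\delta}_z}-1| \le e^{|\beta|}+1$ with support inside $A_{\delta,D}(z)$, a set of finite $\nu_{\lambda}$-measure. Substituting these kernels into \eqref{eq:chaos_exp_PRM_gen} and using linearity of $I_q^{N_{\lambda}}$ to pull the non-random prefactor $\E[Y(N_{\lambda})] = e^{10\Delta(\beta)\alpha_{\delta,D}(z)}$ outside each integral yields \eqref{eq:chaos_decomp_layering_uv} (with the $q=0$ term absorbed into the leading ``$1+$'').

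The whole argument is algebraic once Proposition \ref{prop:wi_chaos_poisson} is available. There is no real obstacle: the only step that is not mere substitution is checking the $L^2$ hypothesis, and that is handled by a single application of the one-point formula \eqref{eq:1_point_0}. I would expect the entire proof to occupy no more than a few lines in the final write-up.
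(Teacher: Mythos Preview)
Your proposal is correct and follows essentially the same route as the paper: apply Proposition \ref{prop:wi_chaos_poisson} to $Y = Y_{h^{\delta}_z,\beta}$, use \eqref{eq:diff_op_exp} to identify the kernels $f_q$ as a constant times $(e^{\beta h^{\delta}_z}-1)^{\otimes q}$, and factor out the prefactor $\E[Y(N_\lambda)] = e^{10\Delta(\beta)\alpha_{\delta,D}(z)}$ from each $I_q^{N_\lambda}$. If anything, you are slightly more careful than the paper in explicitly verifying the $L^2(\pr_\lambda)$ hypothesis via \eqref{eq:1_point_0} with $2\beta$, which the paper leaves implicit.
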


    \begin{proof}
        Note that, here $Y = Y_{h^{\delta}_z, \beta}$. By Lemma \ref{lem:1,2point}, $\E[Y(N_{\lambda})] = \E\left[ e^{\beta N_{\lambda}(h^{\delta}_z)}\right] = e^{10\Delta(\beta) \alpha_{\delta, D}(z) }$. Plugging the expression \eqref{eq:fq_diff_op} into \eqref{eq:chaos_exp_PRM_gen_1} we have, for $q\ge 1$
		\begin{align}
			I^{N_{\lambda}}_q (f_q) = & \frac{1}{q!} \int_{A^q} \E[(D^q_{x_1, \ldots, x_q} Y)(N_{\lambda})] \ind\{x_k \neq x_l \forall k\neq l\} \hat{N}_{\lambda}(dx_1) \cdots \hat{N}_{\lambda}(dx_q)
			\nonumber \\
			= &  \frac{1}{q!} \int_{A^q} \E[e^{\beta N_{\lambda}(h^{\delta}_z)}] (e^{\beta h^{\delta}_z(\cdot)}-1)^{\otimes q} (x_1, \ldots, x_q) \ind\{x_k \neq x_l \forall k\neq l\} \hat{N}_{\lambda}(dx_1) \cdots \hat{N}_{\lambda}(dx_q)
			\nonumber \\
			= & e^{10\Delta(\beta) \alpha_{\delta, D}(z) } \frac{1}{q!} I^{N_{\lambda}}_q [(e^{\beta h^{\delta}_z(\cdot)}-1)^{\otimes q}]
		\end{align} 
		where we used \eqref{eq:diff_op_exp} for the second equality. Now \eqref{eq:chaos_exp_PRM_gen} gives the required formula \eqref{eq:chaos_decomp_layering_uv}.
    \end{proof}
    
    The first main result of this subsection is the following. Suppose the random measure $V_{\lambda, \beta, D} = V_{\beta}$ is the limiting layering field of $\tilde{V}^{\delta}_{\beta}$ as $\delta \downarrow 0$, as it appears in Theorem \ref{thm:main_1}. For $\varphi \in C_b(D)$, let us use the notation 
	\begin{align}\label{eq:chaos_decomp_layering_1.5}
			V_{\lambda, \beta} (\varphi) : = \int_D \varphi(z) V_{\lambda, \beta} (z) \, dz
	\end{align}
	to denote the integral of $\varphi$ with respect to $V_{\lambda, \beta}$. Then, we clearly have 
    $\langle V_{\lambda, \beta} (\varphi) \rangle = \int_D \varphi(z) \langle V_{\lambda, \beta} (z) \rangle \, dz$ where $\E [V_{\lambda, \beta} (z)]  = \langle V_{\lambda, \beta}(z) \rangle$ was defined in \eqref{eq:conv_1_pt_0}. Also, recall from \eqref{eq:locally_exploding_kernel} that $h_z (x)  = h_z(\epsilon, \gamma) = \epsilon \ind_{A_{0, D}(z)}(\gamma)$ when $x = (\epsilon, \gamma) \in S_{\pm}$.
	
	\begin{prop}[Chaos decomposition of the Poisson layering field]\label{prop:chaos_decomp_layering}
		For each $\varphi \in C_b(D)$,  $V_{\lambda, \beta}(\varphi)$ admits a chaos decomposition 
		\begin{align}\label{eq:chaos_decomp_layering_1}
			V_{\lambda, \beta} (\varphi) = \E [V_{\lambda, \beta} (\varphi)] + \sum_{q=1}^{\infty} I^{N_{\lambda}}_q (f^{\varphi}_{q, \lambda, \beta}), \text{ in } L^2(\pr),
		\end{align} 
		where, for $x_1, \ldots, x_q \in S_{\pm}$,
		\begin{align}\label{eq:chaos_decomp_layering_2}
			f^{\varphi}_{q, \lambda, \beta}(x_1, \ldots, x_q) = \frac{1}{q!}\int_D \varphi(z) \E [V_{\lambda, \beta} (z)] \prod_{i=1}^q \left(e^{\beta h_z(x_i)} -1 \right) \, dz.
		\end{align} 		
	\end{prop}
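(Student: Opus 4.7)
The strategy is to derive the chaos decomposition for the regularized field $\tilde{V}^{\delta}_{\beta}(\varphi)$ using Lemma~\ref{lem:chaos_decomp_layering_uv}, then send $\delta\to 0$ and identify the limiting kernels with those in~\eqref{eq:chaos_decomp_layering_2}. First, for fixed $\delta>0$, I would integrate the pointwise chaos decomposition of Lemma~\ref{lem:chaos_decomp_layering_uv} against $\varphi$ and use a stochastic Fubini argument to swap the $dz$-integration with the Wiener--It\^o integrals. This is legitimate because $\tilde{V}^{\delta}_{\beta}(\varphi)\in L^2(\pr)$, which guarantees that the resulting kernels
\begin{align*}
    f^{\varphi,\delta}_{q,\lambda,\beta}(x_1,\ldots,x_q) = \frac{1}{q!}\int_D \varphi(z)\,\E[\tilde{V}^{\delta}_{\beta}(z)]\prod_{i=1}^q \bigl(e^{\beta h^{\delta}_z(x_i)}-1\bigr)\,dz
\end{align*}
satisfy $\sum_{q\ge 1} q!\,\|f^{\varphi,\delta}_{q}\|^2_{L^2(\nu_\lambda^q)}<\infty$ and yield $\tilde{V}^{\delta}_{\beta}(\varphi) = \E[\tilde{V}^{\delta}_{\beta}(\varphi)] + \sum_{q\ge 1} I_q^{N_\lambda}(f^{\varphi,\delta}_{q,\lambda,\beta})$ in $L^2(\pr)$.

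The next step is to pass to the limit $\delta\to 0$. By Theorem~\ref{thm:main_1} (or rather Proposition~\ref{prop:layering_field_conv}), $\tilde{V}^{\delta}_{\beta}(\varphi) \to V_{\lambda,\beta}(\varphi)$ in $L^2(\pr)$. Since chaos components of different orders are orthogonal in $L^2(\pr)$ and $\|I_q^{N_\lambda}(f)\|_{L^2(\pr)}^2 = q!\,\|f\|_{L^2(\nu_\lambda^q)}^2$ on symmetric kernels, each $I_q^{N_\lambda}(f^{\varphi,\delta}_{q})$ converges in $L^2(\pr)$ as $\delta\to 0$, and the Wiener--It\^o isometry gives convergence $f^{\varphi,\delta}_{q}\to g_q$ in $L^2(\nu_\lambda^q)$ for some symmetric kernel $g_q$. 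Taking limits in the chaos decomposition therefore yields $V_{\lambda,\beta}(\varphi) = \E[V_{\lambda,\beta}(\varphi)] + \sum_{q\ge 1}I_q^{N_\lambda}(g_q)$ in $L^2(\pr)$.

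It remains to identify $g_q$ with $f^{\varphi}_{q,\lambda,\beta}$ from~\eqref{eq:chaos_decomp_layering_2}. Pointwise identification is easy: for $\nu_\lambda^q$-almost every $(x_1,\ldots,x_q)$ with $x_i=(\epsilon_i,\gamma_i)$, once $\delta<\min_i d(\gamma_i)$ we have $h^{\delta}_z(x_i) = h_z(x_i)$ for all $z\in D$; combined with the pointwise convergence $\E[\tilde{V}^{\delta}_{\beta}(z)]\to \langle V_{\lambda,\beta}(z)\rangle$ from Lemma~\ref{lem:1,2point}(ii), dominated convergence on the bounded domain $D$ gives $f^{\varphi,\delta}_{q}\to f^{\varphi}_{q}$ pointwise $\nu_\lambda^q$-a.e. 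To upgrade this to convergence in $L^2(\nu_\lambda^q)$, I would directly compute the inner product by expanding and using that each factor $e^{\beta h_z(x)}-1$ vanishes on loops not covering $z$: one finds
\begin{align*}
    \int_{S_\pm}\bigl(e^{\beta h_z(x)}-1\bigr)\bigl(e^{\beta h_{z'}(x)}-1\bigr)\nu_\lambda(dx) = \tfrac{\lambda}{2}\bigl(\cosh(2\beta)-2\cosh(\beta)+1\bigr)\alpha_D(z,z'),
\end{align*}
so that $\|f^{\varphi,\delta}_q\|^2_{L^2(\nu_\lambda^q)}$ reduces to the $L^1(D\x D)$ integral of $\varphi(z)\varphi(z')\E[\tilde{V}^{\delta}_{\beta}(z)]\E[\tilde{V}^{\delta}_{\beta}(z')]\alpha_D(z,z')^q/(q!)^2$; convergence in $L^2(\nu_\lambda^q)$ of $f^{\varphi,\delta}_q$ then follows by a similar $L^1(D\x D)$ argument, exactly of the type carried out in Lemma~\ref{lem:L^1_conv}.

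The main obstacle is this last upgrade to $L^2(\nu_\lambda^q)$: the measure $\nu_\lambda$ is infinite because tiny loops have infinite $\mu_D$-mass, so pointwise convergence cannot give $L^2$ convergence without further input. The saving point is the vanishing of $(e^{\beta h_z(x)}-1)$ on loops not covering $z$, which restricts the effective integration in $\nu_\lambda$ to loops touching the marked points $z$ and $z'$. The hypothesis $\Delta(2\beta)<1$ (which underlies Lemma~\ref{lem:L^1_conv}) is precisely what guarantees that the resulting quantities $\alpha_D(z,z')^q$ are integrable against $\varphi(z)\varphi(z')\langle V_{\lambda,\beta}(z)\rangle\langle V_{\lambda,\beta}(z')\rangle$ on $D\x D$, uniformly in $q$, thereby providing a dominating function and allowing the identification $g_q = f^{\varphi}_{q,\lambda,\beta}$.
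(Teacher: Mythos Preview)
Your proposal is correct and follows essentially the same route as the paper: write the chaos expansion of $\tilde V^{\delta}_{\beta}(\varphi)$ from Lemma~\ref{lem:chaos_decomp_layering_uv}, pass to the limit $\delta\to 0$ in each chaos component, and identify the limiting kernels via pointwise convergence using Lemma~\ref{lem:1,2point}(ii) and dominated convergence over $D$.

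The one genuine difference is how you obtain $L^2$-convergence of each chaos component. The paper verifies directly, via the It\^o isometry, that $(I^{N_\lambda}_q(f^{\delta}_q))_{\delta>0}$ is Cauchy in $L^2(\pr)$ (the computation is written out for $q=1$). You instead invoke the orthogonality of the Wiener--It\^o chaoses: since $\tilde V^{\delta}_\beta(\varphi)\to V_{\lambda,\beta}(\varphi)$ in $L^2(\pr)$, projection onto the $q$-th chaos gives $I^{N_\lambda}_q(f^{\delta}_q)\to I^{N_\lambda}_q(g_q)$ in $L^2(\pr)$ for some $g_q$, and the isometry yields $f^{\delta}_q\to g_q$ in $L^2(\nu_\lambda^q)$. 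This is a legitimate and slightly cleaner shortcut.

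Given that, your final paragraph is redundant: once you have $f^{\delta}_q\to g_q$ in $L^2(\nu_\lambda^q)$ \emph{and} $f^{\delta}_q\to f^{\varphi}_q$ pointwise $\nu_\lambda^q$-a.e., passing to an a.e.-convergent subsequence immediately gives $g_q=f^{\varphi}_q$ $\nu_\lambda^q$-a.e.; there is no need to recompute $\|f^{\delta}_q\|^2_{L^2(\nu_\lambda^q)}$ or worry about the infinite mass of $\nu_\lambda$. (Incidentally, your displayed inner-product formula is off by a factor of $2$: the correct value is $\lambda(\cosh(2\beta)-2\cosh\beta+1)\,\alpha_D(z,z')$.)
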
	
    The proof of the above result is in Appendix \ref{app:proof_chaos_decomp_layering}. Similarly, in the next result we state the Wiener-It\^{o} chaos expansion of the limiting GLF, whose existence is guaranteed by Theorem \ref{thm:main_2}. For each $q\ge 1$, let us define the multiple Gaussian integral as
	\begin{align}\label{eq:mult_Gaussian_integrals}
		I^G_q (h^{\otimes q}) = \int_{{S_{\pm}}^q} \prod_{i=1}^q h(x_i) \, G(dx_1) \cdots G(dx_q).
	\end{align} 
    where $h$ is a function on $S_{\pm}$. The proof of the following result is in Appendix \ref{app:proof_prop_GLF_chaos_exp}.
    
	\begin{prop}[Chaos expansions of the Gaussian layering field]\label{prop:GLF_chaos_exp}
	    For all $\varphi \in C_b(D)$, 
			\begin{align}\label{eq:W_conv_1}
				W_{\xi}(\varphi) = \E [W_{\xi}(\varphi)] + \sum_{q=1}^{\infty} I^G_q(w^{\varphi}_{q, \xi}) \text{ in } L^2(\pr),
			\end{align}
			where $\E[W_{\xi}(\varphi)] : = \int_D \varphi(z) \E[W_{\xi}(z)] \, dz$ and
			\begin{align}\label{eq:W_conv_2}
				w^{\varphi}_{q, \xi} (x_1, \ldots, x_q) = & \frac{\xi^q}{q!} \int_D \varphi(z)\E[W_{\xi}(z)]  (h_z)^{\otimes q} (x_1, \ldots, x_q) \, dz
				\nonumber \\
				= & \frac{\xi^q}{q!} \int_D \varphi(z)d_z^{2\Delta_{\xi} } e^{\frac{\xi^2}{2} \alpha_{d_z, D}(z) }   (h_z)^{\otimes q} (x_1, \ldots, x_q) \, dz
			\end{align}		
	\end{prop}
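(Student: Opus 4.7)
The overall plan is to derive the chaos expansion first at the level of the cutoff field $\tilde{W}^{\delta}_{\xi}(\varphi)$, using the Wick exponential identity for Gaussians, and then to pass to the limit $\delta\to 0$ using the $L^2(\pr)$ convergence $\tilde{W}^{\delta}_{\xi}(\varphi)\to W_{\xi}(\varphi)$ supplied by Theorem \ref{thm:main_2}, together with the orthogonality of the Wiener chaoses.

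First, since $h^{\delta}_z \in L^2(\nu)$ with $\lVert h^{\delta}_z\rVert^2_{L^2(\nu)} = \alpha_{\delta, D}(z)$, the classical Gaussian Wick expansion gives
\begin{align}
e^{\xi G(h^{\delta}_z) - \frac{\xi^2}{2}\alpha_{\delta, D}(z)} = \sum_{q=0}^{\infty} \frac{\xi^q}{q!} I^G_q\bigl((h^{\delta}_z)^{\otimes q}\bigr)
\end{align}
in $L^2(\pr)$. Multiplying both sides by the deterministic factor $\E[\tilde{W}^{\delta}_{\xi}(z)] = \delta^{2\Delta_{\xi}} e^{\frac{\xi^2}{2}\alpha_{\delta, D}(z)}$ from Proposition \ref{prop:GLF_facts}(i), integrating against $\varphi \in C_b(D)$, and justifying the exchange of $dz$ and the stochastic integrals via Fubini (absolute integrability coming from the Wiener--It\^{o} isometry together with the second moment bound of Proposition \ref{prop:GLF_facts}(ii)), I obtain
\begin{align}
\tilde{W}^{\delta}_{\xi}(\varphi) = \E[\tilde{W}^{\delta}_{\xi}(\varphi)] + \sum_{q=1}^{\infty} I^G_q(w^{\varphi,\delta}_{q,\xi}),
\end{align}
with kernels $w^{\varphi,\delta}_{q,\xi}(x_1, \ldots, x_q) = \frac{\xi^q}{q!}\int_D \varphi(z) \E[\tilde{W}^{\delta}_{\xi}(z)] \prod_{i=1}^q h^{\delta}_z(x_i)\, dz$.

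Next I would pass to $\delta \to 0$. By Theorem \ref{thm:main_2}, $\tilde{W}^{\delta}_{\xi}(\varphi)\to W_{\xi}(\varphi)$ in $L^2(\pr)$, so orthogonal projection onto the $q$-th Wiener chaos forces $I^G_q(w^{\varphi,\delta}_{q,\xi})$ to converge in $L^2(\pr)$, whence $w^{\varphi,\delta}_{q,\xi}\to \tilde{w}^{\varphi}_{q,\xi}$ in $L^2(\nu^q)$ for a symmetric $\tilde{w}^{\varphi}_{q,\xi}$. To identify $\tilde{w}^{\varphi}_{q,\xi}$ with the $w^{\varphi}_{q,\xi}$ of the statement, I would compute $\lVert w^{\varphi,\delta}_{q,\xi} - w^{\varphi}_{q,\xi}\rVert^2_{L^2(\nu^q)}$ directly: using $\int h^{a}_z h^{b}_w \, d\nu = \mu_D(A_{a,D}(z)\cap A_{b,D}(w))$, it expands into a sum of four double integrals of the shape
\begin{align}
\frac{\xi^{2q}}{(q!)^2}\int_{D\x D}\varphi(z)\varphi(w)\, \E[\tilde{W}^{a}_{\xi}(z)]\,\E[\tilde{W}^{b}_{\xi}(w)]\, \mu_D(A_{a,D}(z)\cap A_{b,D}(w))^q\, dz\,dw,
\end{align}
with $a,b \in \{\delta, 0\}$ and the conventions $h^0_z = h_z$, $\E[\tilde{W}^0_\xi(z)] := \E[W_\xi(z)]$. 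For $a\vee b \le |z-w|$ the inner mass is simply $\alpha_D(z,w)$, which is continuous off the diagonal by Lemma \ref{lem:alpha_cont}; pointwise convergence to a common limit is then immediate, and dominated convergence (via the global $L^1$-control of Proposition \ref{prop:GLF_facts}(iii)) promotes it to convergence of the four integrals, and hence of the squared norm, to zero.

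The main obstacle is that the limiting kernel $h_z$ is not in $L^2(\nu)$ since $\alpha_{0,D}(z) = +\infty$, so the individual factors $(h_z)^{\otimes q}$ cannot themselves be treated as $L^2$ objects on $S_{\pm}^q$. What rescues the argument is that only the \emph{integrated} kernel $w^{\varphi}_{q,\xi} = \frac{\xi^q}{q!}\int_D \varphi(z)\E[W_{\xi}(z)] (h_z)^{\otimes q}\, dz$ needs to lie in $L^2(\nu^q)$, and its squared norm is exactly
\begin{align}
\lVert w^{\varphi}_{q,\xi}\rVert^2_{L^2(\nu^q)} = \frac{\xi^{2q}}{(q!)^2}\int_{D\x D}\varphi(z)\varphi(w)\, \E[W_{\xi}(z)]\,\E[W_{\xi}(w)]\, \alpha_D(z,w)^q \, dz\, dw,
\end{align}
so that $\sum_{q\ge 1} q!\lVert w^{\varphi}_{q,\xi}\rVert^2_{L^2(\nu^q)}$ reconstructs the finite quantity $\int_{D\x D}\varphi(z)\varphi(w)\langle W_{\xi}(z)W_{\xi}(w)\rangle\, dz\, dw - \E[W_\xi(\varphi)]^2$ appearing in Proposition \ref{prop:GLF_facts}(iii). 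This global bound simultaneously furnishes the dominating function for the limit step and yields, through the Wiener--It\^{o} isometry, the claimed $L^2(\pr)$ convergence of the full series in \eqref{eq:W_conv_1}, closing the argument.
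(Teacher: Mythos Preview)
Your proposal is correct and follows essentially the same route as the paper: derive the chaos expansion of $\tilde{W}^{\delta}_{\xi}(\varphi)$ via the Wick exponential formula (the paper's Lemma~\ref{lem:wi_chaos_gaussian}), then pass to the $\delta\to 0$ limit term by term. The only organizational difference is that you first invoke the global $L^2(\pr)$ convergence of $\tilde{W}^{\delta}_{\xi}(\varphi)$ (established in the proof of Theorem~\ref{thm:main_2}, equation~\eqref{eq:W_conv_8}) and project onto each fixed Wiener chaos to deduce that $I^G_q(w^{\varphi,\delta}_{q,\xi})$ converges, and only then identify the limiting kernel via a direct $L^2(\nu^q)$ computation; the paper instead argues directly that each $I^G_q(w^{\varphi,\delta}_{q,\xi})$ is Cauchy in $L^2(\pr)$ (by the same calculation as in Proposition~\ref{prop:chaos_decomp_layering}) after first showing pointwise convergence $w^{\varphi,\delta}_{q,\xi}\to w^{\varphi}_{q,\xi}$ via dominated convergence in the $dz$-integral.
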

    
	\subsection{Proof of Theorem \ref{thm:main_3}}	
	
	The following proposition collects some preliminary observations. Its proof is contained in Appendix \ref{app:proof_prop:prelim_main_thm}.
	\begin{prop}\label{prop:prelim_main_thm}
		Let the assumptions of Theorem \ref{thm:main_3} hold and suppose $\varphi \in C_b (D)$. Recall the notations $f^{\varphi}_{q, \lambda, \beta}$ and $w^{\varphi}_{q, \xi}$ from the chaos expansions \eqref{eq:chaos_decomp_layering_1} and \eqref{eq:W_conv_1} of $V_{\lambda, \beta}(\varphi)$ and $W_{\xi}(\varphi)$ respectively. Then, 
		\begin{itemize}
			\item[(a)] As $\lambda \to \infty$ and $\beta \to 0$ such that $\lambda \beta^2 \to \xi^2$, we have the convergence
			\begin{align}
				\langle V_{\lambda, \beta}(\varphi)\rangle \to \langle W_{\xi}(\varphi)\rangle,
			\end{align} 
			where the quantities above have been defined in \eqref{eq:chaos_decomp_layering_1.5} and in Proposition \ref{prop:GLF_chaos_exp}.
			
			\item[(b)] $\sum_{q=1}^{\infty} q! \lVert w^{\varphi}_{q, \xi} \rVert^2_{L^2(\nu^q)} <\infty$,
			
			\item[(c)] As $\lambda \to \infty$, $\lambda \beta^2 \to \xi^2$ we have for all $q \ge 1$,
			\begin{align}\label{eq:f_w_conv_0}
				\lVert \lambda^{\frac{q}{2}} f^{\varphi}_{q, \lambda, \beta} - w^{\varphi}_{q, \xi}\rVert_{L^2(\nu^q)} \to 0.
			\end{align}		
			
			\item[(d)] As $N\to \infty$, we have,
			\begin{align}
				\limsup_{\lambda\uparrow \infty, \lambda\beta^2 \to \xi ^2} \sum_{q= N+1}^{\infty} q! \lVert \lambda^{q/2} f^{\varphi}_{q,\lambda, \beta}\rVert_{L^2(\nu^q)} \to 0.
			\end{align}
		\end{itemize}		
	\end{prop}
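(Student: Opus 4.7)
The strategy is to reduce all four claims to explicit closed-form computations in $L^2(\nu^q)$. The key building blocks are three inner products that follow directly from $\nu = \tfrac{1}{2}(\delta_{-1}+\delta_{+1})\otimes\mu_D$ and the definition \eqref{eq:locally_exploding_kernel} of $h_z$: one obtains $\int_{S_\pm} h_z h_w\,d\nu = \alpha(z,w)$, $\int_{S_\pm}(e^{\beta h_z}-1)(e^{\beta h_w}-1)\,d\nu = \kappa(\beta)\,\alpha(z,w)$ with $\kappa(\beta) := \cosh(2\beta) - 2\cosh(\beta) + 1$, and $\int_{S_\pm}(e^{\beta h_z}-1) h_w\,d\nu = \sinh(\beta)\,\alpha(z,w)$. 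Taylor expansion yields $\kappa(\beta) = \beta^2 + O(\beta^4)$ and $\sinh\beta = \beta + O(\beta^3)$, so in the regime $\lambda\to\infty$, $\lambda\beta^2\to\xi^2$ we have $\lambda \kappa(\beta)\to\xi^2$, $\lambda^{1/2}\sinh\beta\to\xi$, and $\Delta(\lambda,\beta)\to\Delta_\xi$. Together with the product structure visible in \eqref{eq:chaos_decomp_layering_2} and \eqref{eq:W_conv_2}, Fubini expresses each of $\lVert\lambda^{q/2}f^\varphi_{q,\lambda,\beta}\rVert^2_{L^2(\nu^q)}$, $\lVert w^\varphi_{q,\xi}\rVert^2_{L^2(\nu^q)}$, and their cross inner product as a double integral of shape $\frac{c^q}{(q!)^2}\int_{D\times D}\varphi(z)\varphi(w)\,E_z E_w\,\alpha(z,w)^q\,dz\,dw$, with $c\in\{\lambda \kappa(\beta),\,\xi^2,\,\lambda^{1/2}\xi\sinh\beta\}$ and $E_\cdot \in\{\langle V_{\lambda,\beta}(\cdot)\rangle,\langle W_\xi(\cdot)\rangle\}$.

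For (a), pointwise convergence $\langle V_{\lambda,\beta}(z)\rangle\to\langle W_\xi(z)\rangle$ on $D$ is immediate from $\Delta(\lambda,\beta)\to\Delta_\xi$, while the bound \eqref{eq:mu_D_size} produces the uniform majorization $\langle V_{\lambda,\beta}(z)\rangle\le d(D)^{2\Delta(\lambda,\beta)}$, which stays bounded in the limiting regime; dominated convergence concludes against the bounded continuous test $\varphi$ on the bounded domain $D$. For (b), summing the closed form yields $\sum_{q\ge1} q!\lVert w^\varphi_{q,\xi}\rVert^2_{L^2(\nu^q)} = \int_{D\times D}\varphi(z)\varphi(w)\langle W_\xi(z)\rangle\langle W_\xi(w)\rangle(e^{\xi^2\alpha(z,w)}-1)\,dz\,dw$, which is finite because $e^{\xi^2\alpha(z,w)}$ exhibits the diagonal singularity $|z-w|^{-\xi^2/5}$ and the assumption $\xi<\sqrt{2}$ is more than sufficient for integrability on $D\times D$.

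For (c), expanding $\lVert\lambda^{q/2}f^\varphi_{q,\lambda,\beta}-w^\varphi_{q,\xi}\rVert^2_{L^2(\nu^q)}$ into the three explicit terms above, the prefactors $(\lambda \kappa(\beta))^q$, $(\lambda^{1/2}\xi\sinh\beta)^q$ and $\xi^{2q}$ all converge to the common value $\xi^{2q}$, the one-point functions converge by (a), and for fixed $q$ the integrand is dominated by a constant multiple of $\lVert\varphi\rVert_\infty^2\,\alpha(z,w)^q$, which is integrable on $D\times D$ since $\alpha(z,w)$ has only a logarithmic singularity; dominated convergence then gives the limit zero.

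The main obstacle is (d), since the tail bound must be uniform in $(\lambda,\beta)$. Summing the closed form gives
\[
\sum_{q>N} q!\lambda^q\lVert f^\varphi_{q,\lambda,\beta}\rVert^2_{L^2(\nu^q)} = \int_{D\times D}\varphi(z)\varphi(w)\,\langle V_{\lambda,\beta}(z)\rangle\langle V_{\lambda,\beta}(w)\rangle\sum_{q>N}\frac{(\lambda \kappa(\beta)\,\alpha(z,w))^q}{q!}\,dz\,dw.
\]
Fix $\eta>0$ with $\xi^2+\eta<10$; for $(\lambda,\beta)$ eventually in the limiting regime $\lambda \kappa(\beta)\le\xi^2+\eta$, and combined with the uniform bound on $\langle V_{\lambda,\beta}\rangle$ from \eqref{eq:mu_D_size} the integrand is dominated by $C\lVert\varphi\rVert_\infty^2\,(e^{(\xi^2+\eta)\alpha(z,w)}-1)$, integrable on $D\times D$ by the same reasoning as in (b). Since the partial tails $\sum_{q>N}r^q/q!\to 0$ pointwise in $r\ge0$ as $N\to\infty$, dominated convergence delivers the required uniform vanishing.
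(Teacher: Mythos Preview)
Your proof is correct and follows essentially the same strategy as the paper: reduce all four items to explicit double integrals over $D\times D$ involving powers of $\alpha(z,w)$, use the bound \eqref{eq:mu_D_size} for uniform domination, and conclude by dominated convergence. Parts (a), (b) and (d) are virtually identical to the paper's argument.

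For part (c) there is a small organisational difference worth noting. The paper splits $\lambda^{q/2}f^{\varphi}_{q,\lambda,\beta}-w^{\varphi}_{q,\xi}$ by a triangle inequality into two error terms, one carrying the difference of one-point functions $\langle V_{\lambda,\beta}(z)\rangle-\langle W_\xi(z)\rangle$ and one carrying the kernel difference $\prod\sqrt{\lambda}(e^{\beta h_z}-1)-\prod\xi h_z$, and bounds each separately. Your route is cleaner: you compute the three inner products $\lVert\lambda^{q/2}f\rVert^2$, $\langle\lambda^{q/2}f,w\rangle$, $\lVert w\rVert^2$ in closed form via the identities $\int(e^{\beta h_z}-1)(e^{\beta h_w}-1)\,d\nu=\kappa(\beta)\alpha(z,w)$ and $\int(e^{\beta h_z}-1)h_w\,d\nu=\sinh(\beta)\alpha(z,w)$, observe that the scalar prefactors $\lambda\kappa(\beta)$, $\sqrt{\lambda}\,\xi\sinh\beta$, $\xi^2$ all tend to the common limit $\xi^2$, and let the polarisation $\lVert a-b\rVert^2=\lVert a\rVert^2-2\langle a,b\rangle+\lVert b\rVert^2$ collapse to zero. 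Both arguments rest on the same integrability estimate for $\alpha(z,w)^q$; yours avoids one intermediate inequality at the cost of computing the cross term explicitly.
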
 
	
	\begin{proof}[Proof of Theorem \ref{thm:main_3}]
		Note that the statements in the proposition above are exactly analogous to those of \cite[Eqs. (6.5) - (6.8)]{cgpr21}. Observe also that these were sufficient to establish \cite[Theorem 6.2]{cgpr21} and the calculations appearing in \cite[p. 920-921]{cgpr21} do not depend on the specific nature of the field (i.e. whether they are imaginary of real valued). Therefore the convergence of $V_{\lambda, \beta}(\varphi)$ to $W_{\xi}(\varphi)$ can be proved with the help of the same arguments without any change. Then the convergence of finite dimensional distributions as required by Theorem \ref{thm:main_3} follows by an application of the Cramer-Wold device.
	\end{proof}
	
    \appendix

    \section{Proof of Proposition \ref{prop:alpha_near_bdry}}\label{app:proof_prop:alpha_near_bdry}

              
        (i) Let $r>0$ be fixed. Because of invariance of $\mu_{\bH}$ under the horizontal translation maps $z \mapsto z+ a$ (for $a \in \R$), it is enough to consider the case of an arbitrary point $i\cdot y$ ($y >0$) on the imaginary axis and show that $\alpha_{ry, \bH}(i y) = \alpha_{r, \bH}(i)= : C_{\bH}(r)$. But this follows from the fact that under the conformal map $f : \bH \to \bH, f(w) = y^{-1}\cdot w$ we have,
        \begin{align}
            \mu_{\bH}(A_{r, \bH} (i)) = & f\circ \mu_{\bH} (A_{r, \bH} (i)) = \mu_{\bH} (A_{ry, \bH} (i\cdot y)).    
        \end{align}
        Here the left equality is just a consequence of the conformal invariance \eqref{eq:conf_inv} of $\mu_{\bH}$ and the right equality can be obtained from the definition of $f \circ \mu_{\bH}$. Since all the loops in $A_{ry, \bH} (i\cdot y)$ have their diameters bounded below and necessarily intersect the segment $(0, iy)$ of the imaginary axis, thinness implies that the above quantity is finite. 

        By the same argument as above, there is a finite constants $\bar{C}_{\bH}(r)>0$ such that, 
        \begin{align}\label{eq:alpha_H_near_bdry}
           \bar{C}_{\bH}(r) =\bar{\alpha}_{r, \bH}(i) =  \bar{\alpha}_{ry, \bH}(i\cdot y)
        \end{align}
        and the above is finite by thinness. These prove \eqref{eq:alpha_H_near_bdry_0}. The asymptotics of the constants follow from \eqref{eq:mu_size_uv_ir} and \cite[Corollary 3]{nw11}.

        (ii) Let us prove the claim for the unit disk $\D$. It is easy to see that the constant $C_{\D}$, if it exists, has to be independent of the limit point on $\partial \D$ due to rotational symmetry of the disk. Also, because of this reason it is enough to prove the claim for the specific point $z_0=-1 \in \partial \D$ and assume that the sequence $\{z_n\}_n$ lies entirely on the real line. (If a point $z_n$ is not on the real line, by a rotation we map it to a point $z_n' \in \R$ such that $d_{z_n} = d_{z'_n}$. The rotational invariance of $\mu_{\D}$ then gives $\alpha_{d_{z_n}, \D}(z_n) = \alpha_{d_{z'_n}, \D}(z'_n)$.) For the sake of concreteness we let the sequence be $z_n = -1+1/n$ and the general case will follow similarly. We are going to show that, 
         \begin{align}\label{eq:alpha_D_near_bdry_0}
            \limsup_{n\to \infty }\bar{\alpha}_{d_{z_n}/2, \D} (z_n) = \limsup_{n\to \infty } \mu_{\D} (\bar{A}_{d_{z_n}/2, \D} (z_n)) = \bar{C}_{\D}< \infty.
        \end{align}
        By the relation \eqref{eq:diam_disk_rel_2}, the above will imply $\limsup_{n \to \infty} \alpha_{d_{z_n}, \D} (z_n) \le \bar{C}_{\D} < \infty$, which is the required result. 
         
        Let $\varphi: \D \to \bH, w \mapsto i\frac{1+w}{1-w}$ be the conformal equivalence between the unit disk and the upper-half plane. We note that $\varphi$ can be extended locally to $\partial \D$ in a neighborhood of $-1$ and that, $\varphi(z_n) = \varphi(-1+1/n) = \frac{i}{2n-1}$ and $\varphi'(z_n) =\varphi'(-1+1/n) = \frac{2i}{(2-1/n)^2}$ for all $n \ge 1$. We have        \begin{align}\label{eq:alpha_D_near_bdry}
            & \bar{\alpha}_{1/2n, \D} (z_n) =   \mu_{\D} (\bar{A}_{1/2n, \D} (-1+1/n) )
            = \varphi^{-1}\circ \mu_{\bH} (\bar{A}_{1/2n, \D} (-1+1/n) ) 
            \nonumber \\
            = &  \mu_{\bH} (\gamma \in S_{\bH} \mid \varphi^{-1}(\gamma) \in \bar{A}_{1/2n, \D} (-1+1/n)  )
            \nonumber \\
            = & \mu_{\bH} (\gamma \in S_{\bH} \mid \varphi(-1+1/n) \in \bar{\gamma}, \gamma \nsubseteq \varphi (B(-1+1/n ; 1/2n) ) )
            \nonumber \\
            = & \mu_{\bH} \left(\gamma \in S_{\bH} \mid \frac{i}{2n-1} \in \bar{\gamma}, \gamma \nsubseteq B(\frac{i}{2n-1}  ; |\varphi'(-1+1/n)| \frac{1}{2n}) \right) + o(1)
            \nonumber \\
            = & \bar{\alpha}_{\frac{1/n}{(2-1/n)^2}, \bH} \left( z_n \right) + o(1),
        \end{align}
        as $n \to \infty$. The second equality comes from the conformal invariance of $\mu_{\bH}$, the fifth equality comes from \cite[Lemma 4.2]{cgk16} and the rest are obtained simply by applying the relevant definitions. 
        Now we observe that, 
        \begin{align}
            & \left|\bar{\alpha}_{\frac{1/n}{(2-1/n)^2}, \bH} \left( z_n \right) -  \bar{\alpha}_{\frac{1}{2(2n-1)}, \bH} \left( z_n \right) \right| 
            = & \bar{\alpha}_{\frac{1}{2(2n-1)}, \frac{1/n}{(2-1/n)^2}, \bH} \left( \frac{i}{2n-1} \right)  = \frac{1}{5} \log \frac{2}{2-1/n}.
        \end{align}
        The r.h.s. follows from \eqref{eq:mu_size_uv_ir} and clearly vanishes as $n \to \infty$.
        By \eqref{eq:alpha_H_near_bdry}, $ \bar{\alpha}_{\frac{1}{2(2n-1)}, \bH} \left( \frac{i}{2n-1} \right) = \bar{C}_{\bH} (1/2)< \infty$ and thus from \eqref{eq:alpha_D_near_bdry} we see that $\limsup_{n \to \infty} \bar{\alpha}_{1/2n, \D}(-1+1/n) \le \bar{C}_{\bH}(1/2)$. Thus \eqref{eq:alpha_D_near_bdry_0} holds.
                  
       (iii) Now we consider the case of the general bounded domain $D$ with a fixed point $z_0 \in \partial D$. Let $\varphi: \D \to D$ be a conformal equivalence. Since $D$ is simply connected and has a smooth boundary by our hypothesis, we know that both $\varphi$ and $\varphi'$ can be continuously extended to the whole $\bar{\D}$ (by \cite[Theorem 3.5]{pommerenke92} and the fact that $\varphi$ has a primitive on $\D$).  Without loss of generality we can assume that $\varphi(-1) = z_0$. 
       
       Let $\{z_n\}_n$ be a sequence in $D$ with $\lim_{n \to \infty}z_n = z_0$. We are going to show that, $\bar{C}_{D}(z_0) = \limsup_{n \to \infty} \bar{\alpha}_{d_{z_n}/2, D} (z_n)$ is finite. Since $\alpha_{d_{z_n}, D}(z_n) \le \bar{\alpha}_{d_{z_n}/2, D} (z_n)$ by \eqref{eq:diam_disk_rel_2}, this will prove the required claim. We will transport the points $z_n$'s via $\varphi^{-1}$ to $\D$ and then use the result proved in the previous part. Since $\mu_D = \varphi \circ \mu_{\D}$ we have,
      \begin{align}\label{eq:alpha_D_gen_near_bdry_1}
          \bar{\alpha}_{d_{z_n}/2, D} (z_n)  &  = \varphi\circ \mu_{\D} (\bar{A}_{d_{z_n}/2, D}(z_n))
          =  \mu_{\D} (\gamma \in S_{\D} \mid \varphi(\gamma) \in \bar{A}_{d_{z_n}/2, D}(z_n) )
          \nonumber \\
          = & \mu_{\D} (\gamma \in S_{\D} \mid  \varphi^{-1}(z_n) \in \bar{\gamma}, \gamma \nsubseteq \varphi^{-1}(B(z_n; d_{z_n}/2)))
          \nonumber \\
          = & \mu_{\D} (\gamma \in S_{\D} \mid  \varphi^{-1}(z_n) \in \bar{\gamma}, \gamma \nsubseteq B(\varphi^{-1}(z_n); |(\varphi^{-1})'(z_n)|d_{z_n}/2)) + o(1)
          \nonumber \\
          = & \bar{\alpha}_{|(\varphi^{-1})'(z_n)| d_{z_n}/2, \D} (\varphi^{-1}(z_n))+ o(1)
      \end{align}
      as $n \to \infty$, by \cite[Lemma 4.2]{cgk16}. 
      
      Since $(\varphi^{-1})'$ can be extended continuously to $\partial D$ (by \cite[Theorem 3.5]{pommerenke92}), without any loss of generality we can assume that $|(\varphi^{-1})'(z_0)|<1$. Thus, for large enough $n\ge 1$, we have $|(\varphi^{-1})'(z_n)|<1$. This gives us the equality
      \begin{align}\label{eq:alpha_D_gen_near_bdry_2}
          \bar{\alpha}_{|(\varphi^{-1})'(z_n)| d_{z_n}/2, \D} (\varphi^{-1}(z_n)) = \frac{1}{5} \log \frac{1}{|(\varphi^{-1})'(z_n)|} + \bar{\alpha}_{ d_{z_n}/2, \D} (\varphi^{-1}(z_n)).
      \end{align}
      
    Now let us use the fact that, again by \cite[Theorem 3.5]{pommerenke92}, both $\varphi'$ and $(\varphi^{-1})'$ can be continuously extended to the boundaries of their respective domains. Thus, we must have $\inf_{z \in \partial D} |(\varphi^{-1})'(z)| >0$, i.e. $|(\varphi^{-1})'(z)|$ is bounded away from $0$ on $\partial D$. By \cite[Corollary 3.19]{lawler05} we have, for every $n \ge 1$,
      \begin{align}
          \frac{d_{\varphi^{-1} (z_n)}}{4 d_{z_n}} \le |(\varphi^{-1})'(z_n)| \le \frac{4 d_{\varphi^{-1} (z_n)}}{ d_{z_n}}
      \end{align}       
      and so, 
      \begin{align}
           & |\bar{\alpha}_{ d_{z_n}/2, \D} (\varphi^{-1}(z_n)) - \bar{\alpha}_{ d_{\varphi^{-1}(z_n)}/2, \D} (\varphi^{-1}(z_n)) | 
          \le \frac{1}{5} \log \frac{d_{z_n}}{d_{\varphi^{-1}(z_n)}} \le \frac{1}{5} \log \frac{4}{|(\varphi^{-1})'(z_n)|} .
      \end{align}
      Combining this with \eqref{eq:alpha_D_gen_near_bdry_1} and \eqref{eq:alpha_D_gen_near_bdry_2} we have,
      \begin{align}
          \bar{C}_D(z_0) := \limsup_{n \to \infty} \bar{\alpha}_{d_{z_n}/2, D}(z_n)  = & \limsup_{n \to \infty} \bar{\alpha}_{d_{\varphi^{-1}(z_n)}/2, \D} (\varphi^{-1}(z_n)) + \frac{1}{5} \log \frac{1}{|(\varphi^{-1})'(z_0)|}
          \nonumber \\
          = & \bar{C}_{\D} + \frac{1}{5} \log \frac{4}{|(\varphi^{-1})'(z_0)|^2}  < \infty
      \end{align}
      where the r.h.s. follows from the previous part of this proposition. Since $|(\varphi^{-1})'(z)|, z \in \partial D$ are bounded away from $0$ and $+\infty$, the last claim follows.
    
    \section{Proofs from Section \ref{sec:PLF_GLF_exist}} \label{app:proofs_corr_fns}

    \subsection{Proof of Lemma \ref{lem:1,2point}}\label{app:proof_lem:1,2point}

		(i)  Recall the notations introduced in \eqref{eq:loop_sets} and the definition of $V^{\delta}$ from \eqref{eq:layering_number_uv}. We have,
		\begin{align}
			\E (V^{\delta}_{\beta, D}(z)) 
            = & \E \left( e^{\beta \mc{L}_{\lambda/2}^+(A_{\delta, D}(z))}\right) \E \left( e^{-\beta \mc{L}_{\lambda/2}^-(A_{\delta, D}(z))}\right) 
            = e^{10\Delta(\beta) \alpha_{\delta, D}(z) }.
		\end{align}
		by independence of $\mc{L}_{\lambda/2}^+$ and $\mc{L}_{\lambda/2}^-$ and because $\mc{L}_{\lambda/2}^{\pm}(A_{\delta, D}(z))$ are Poisson random variables with parameter $\frac{\lambda}{2}\alpha_{\delta, D} (z)$. 	For the second relation we use the fact that, since $d(z, \partial D)>R>\delta>0$, we must have $A_{\delta, R, D}(z) = A_{\delta, R, \C}(z)$. From this and \eqref{eq:mu_size_uv_ir} we can calculate the mean of the layering field as above to obtain $\E \left( V^{\delta, R} _{\beta, D}(z)\right) = \exp [\lambda  \alpha_{\delta, R, D} (z) (\cosh(\beta) -1)] = \left(\frac{R}{\delta}\right)^{2\Delta(\beta)}$.

        (ii) Since $\delta < d_z = d(z, \partial D)$ implies $\alpha_{\delta, D}(z) = \frac{1}{5} \log\frac{d_z}{\delta} + \alpha_{d_z, D}(z)$, for such $\delta>0$,
		\begin{align}\label{eq:conv_1_pt}
			\delta^{2\Delta(\beta)} \E (V^{\delta}_{\beta, D}(z)) = & \delta^{2\Delta(\beta)} e^{10\Delta(\beta) \alpha_{\delta, D}(z)} 
			=  d_z^{2\Delta(\beta)} e^{10\Delta(\beta) \alpha_{d_z, D}(z)} 
		\end{align}
        and the limit follows.				
		
		(iii) From the definition of $V^{\delta}_{\beta} = V^{\delta}_{\beta, D}$, we can write,
		\begin{align}\label{eq:2point_1}
			 \E \left[ V^{\delta}_{\beta}(z) V^{\delta'}_{\beta'}(w)\right] 
            = & \E \left[ e^{\beta \mc{L}_{\lambda/2}^+(A_{\delta}(z))  +  \beta' \mc{L}_{\lambda/2}^+(A_{\delta'}(w)) } \right] 
            \E \left[ e^{- (\beta \mc{L}_{\lambda/2}^-(A_{\delta}(z))  +  \beta' \mc{L}_{\lambda/2}^-(A_{\delta'}(w))) } \right]
		\end{align}
        since $\mc{L}_{\lambda/2}^+$ and $\mc{L}_{\lambda/2}^-$ are independent point processes. Because the sets $A_{\delta'} (w)\setminus A_{\delta}(z)$, $A_{\delta}(z)\cap A_{\delta'}(w)$ and $A_{\delta}(z)\setminus A_{\delta'} (w)$ are disjoint, we have,   
        \begin{align}\label{eq:2point_2}
			& \E \left[ e^{\pm[ \beta \mc{L}_{\lambda/2}^{\pm}(A_{\delta}(z))  +  \beta' \mc{L}_{\lambda/2}^{\pm}(A_{\delta'}(w))] } \right] 
            \nonumber \\
			= & \E \left[e^{ \pm\beta  \mc{L}_{\lambda/2}^{\pm}( A_{\delta}(z)\setminus A_{\delta'} (w) )}\right] \E \left[e^{ \pm(\beta+\beta')  \mc{L}_{\lambda/2}^{\pm}(A_{\delta}(z)\cap A_{\delta'} (w))}\right] \E \left[e^ {\pm\beta'  \mc{L}_{\lambda/2}^{\pm}(A_{\delta'} (w)\setminus A_{\delta}(z))} \right]
		\end{align}		
		Clearly $A_{\delta}(z)\cap A_{\delta'}(w) = A_{|z-w|}(z,w) = : A(z,w)$ since $\delta, \delta'<|z-w|$. Thus the middle term is,
        \begin{align}\label{eq:2point_2.1}
            \E \left[e^{ \pm (\beta+\beta')  \mc{L}_{\lambda/2}^{\pm}(A_{\delta}(z)\cap A_{\delta'} (w))}\right]  = e^{\frac{\lambda}{2}\alpha(z,w) \left(e^{\pm (\beta+\beta')}-1\right)}.
        \end{align}
        To simplify calculations we assume without any loss of generality that $\delta' < \delta$. This gives us, 
		\begin{align}
			A_{\delta}(z)\setminus A_{\delta'} (w) 
			= & \{ \gamma  \mid z \in \bar{\gamma}, d(\gamma)\ge \delta \text{ and } w \notin \bar{\gamma}\}	
	       	=A_{\delta}(z|w), 
		\end{align}
		and therefore,
		\begin{align}\label{eq:2point_3}
			\E \left[e^{ \pm\beta \mc{L}_{\lambda/2}^{\pm}( A_{\delta}(z)\setminus A_{\delta'} (w) )}\right] = e^{\frac{\lambda}{2} \alpha_{\delta}(z|w)(e^{\pm \beta} -1)}.
		\end{align}
		Lastly, since $\delta < |z-w|$ by assumption, $A_{\delta', \delta} (w) \subset A_{\delta'}(w|z)$. This implies
		\begin{align}
			A_{\delta'} (w)\setminus A_{\delta}(z) 
			= & \{ \gamma  \mid w \in \bar{\gamma}, d(\gamma)\ge \delta' \text{ and } z \notin \bar{\gamma}\}\cup \{ \gamma \mid w \in \bar{\gamma}, \delta>d(\gamma)\ge \delta' \}
			\nonumber \\
			= & A_{\delta'}(w|z) \cup A_{\delta', \delta}(w) = A_{\delta'}(w|z).
		\end{align}
		And so,
		\begin{align}\label{eq:2point_4}
			\E \left[e^ {\pm\beta' \mc{L}_{\lambda/2}^{\pm}(A_{\delta'} (w)\setminus A_{\delta}(z))} \right] = e^{\frac{\lambda}{2} \alpha_{\delta'}(w|z)(e^{\pm \beta'} -1)}.
		\end{align}		
		Combining the expressions \eqref{eq:2point_2}, \eqref{eq:2point_2.1}, \eqref{eq:2point_3} and \eqref{eq:2point_4} in \eqref{eq:2point_1} we get
		the required expression.

        (iv) Let $0<\delta < d_{z,w}$ and $0<\delta' <d_{w,z}$. Since $\alpha_{\delta} (z|w) = \alpha_{\delta, d_{z,w}}(z) + \alpha_{d_{z,w}}(z|w)$ and $
        \alpha_{\delta'} (w|z) = \alpha_{\delta', d_{w,z}}(w) + \alpha_{d_{w,z}}(w|z)$, using \eqref{eq:2point_0} we have,
		\begin{align}
			  \E \left( V^{\delta}_{\beta}(z) V^{\delta'}_{\beta}(w)\right) 
			= & e^{ \lambda \alpha(z,w) \left(\cosh(2\beta)-1\right)}   e^{ \lambda\alpha_{d_{z,w}}(z|w)(\cosh(\beta) -1)}
            e^{\lambda\alpha_{d_{w,z}}(w|z)(\cosh(\beta) -1) }
			\nonumber 
            \\
			& \hspace{1cm} e^{\lambda  \alpha_{\delta,d_{z,w}}(z)(\cosh(\beta) -1)} e^{\lambda \alpha_{\delta' ,d_{w,z}}(w)(\cosh(\beta) -1)} .	
		\end{align}
		By \eqref{eq:mu_size_uv_ir}, 
        \begin{align}
            e^{\lambda  \alpha_{\delta,d_{z,w}}(z)(\cosh(\beta) -1)} e^{\lambda \alpha_{\delta' ,d_{w,z}}(w)(\cosh(\beta) -1)} = \left(\frac{d_{z,w}d_{w,z}}{\delta \delta'}\right)^{2 \Delta (\beta)} 
        \end{align} and thus, as $\delta, \delta' \to 0$, 
		\begin{align}
			 \frac{\E( V^{\delta}_{\beta, D}(z) V^{\delta'}_{\beta, D}(w))}{(\delta \delta') ^{ -2\Delta(\beta)}} 
			\to  e^{10 \Delta(2\beta)  \alpha_{|z-w|}(z,w)}  
            e^{ 10 \Delta(\beta)\alpha_{d_{z,w}}(z|w)} 
            e^{10 \Delta(\beta) \alpha_{d_{w,z}}(w|z) } (d_{z,w}d_{w,z})^{2 \Delta(\beta)}.
		\end{align}			
        As all the terms appearing above are finite, the limit is finite.
    \subsection{Proof of Lemma \ref{lem:L^1_conv}}\label{app:proof_lem:L^1_conv}

    Let us fix $z, w \in D$ and recall that $\alpha(z,w) = \alpha_{|z-w|}(z,w)$. We observe the following simple inequalities, 
		\begin{align}\label{eq:2point_bounds}
			\alpha_{d_{z,w}}(z|w) \le \alpha_{d_{z,w}}(z),
			\alpha_{d_{w,z}}(w|z) \le \alpha_{d_{w,z}}(w)  \text{ and }
            \alpha(z,w) \le 
            \frac{1}{2}(\alpha_{d_{z,w}}(z) + \alpha_{d_{w,z}}(w))
		\end{align}
		using the fact that $d_{z,w}, d_{w,z} \le |z-w|$. 
		From the proof of Lemma \ref{lem:1,2point}(iv) we know that $\E ( \tilde{V}^{\delta}_{\beta}(z) \tilde{V}^{\delta'}_{\beta}(w))$ equals the r.h.s. of \eqref{eq:conv_2_pt_0} whenever $0< \delta, \delta' < d_{z,w} \wedge d_{w,z}$. Therefore for such $\delta, \delta'$, by applying the inequalities noted above, we have 
		\begin{align}\label{eq:L^1_2}
			& \E ( \tilde{V}^{\delta}_{\beta}(z) \tilde{V}^{\delta'}_{\beta}(w))
			\le (d_{z,w}d_{w,z})^{2 \Delta(\beta)} \exp[ (5\Delta(2\beta) + 10\Delta(\beta))(\alpha_{d_{z,w}}(z) + \alpha_{d_{w,w}}(w))] 
		\end{align}
		Since we already know the point-wise convergence from Lemma \ref{lem:1,2point}(iv), by the dominated convergence theorem, it is therefore enough to show that the r.h.s. of \eqref{eq:L^1_2} is integrable. Using the notation $\Lambda = \{ (z,w) \in D\x D \mid d_{z,w}\wedge d_{w,z}<1\}$ let us split the integral as,
		\begin{align}\label{eq:L^1_3}
			I 
			= & \left(\int_{\Lambda}  + \int_{D^2 \setminus \Lambda} \right) (d_{z,w}d_{w,z})^{2 \Delta(\beta)} \exp\{ (5\Delta(2\beta) + 10\Delta(\beta)) (\alpha_{d_{z,w}}(z) + \alpha_{d_{w,z}}(w))\} \, dz \, dw 
			\nonumber \\
			= & I_1+ I_2.
		\end{align}
		
		We can compute the second integral of \eqref{eq:L^1_3} as follows. We have,
		\begin{align}\label{eq:L^1_5}
			I_2 = & \int_{\Lambda^c}  (d_{z,w}d_{w,z})^{2 \Delta(\beta)}  \exp\{ (5\Delta(2\beta) + 10\Delta(\beta)) (\alpha_{d_{z,w}, D}(z) + \alpha_{d_{w,z}, D}(w))\} \, dz \, dw 
			\nonumber \\
			\le & d(D)^{4\Delta(\beta)} \int_{\Lambda^c}  \exp\{ (5\Delta(2\beta) + 10\Delta(\beta)) [\alpha_{1, D}(z) + \alpha_{1, D} (w)]\}  \, dz \, dw 
			\nonumber \\
			\le & d(D)^{4\Delta(\beta)} e^ {2(\Delta(2\beta) + 2\Delta(\beta)) \log d(D))} |D^2| 
            < \infty.
		\end{align}
		where in the second line we have used the facts that $d_{z,w}, d_{w,z} \ge 1$ in this case, $\Delta(\beta), \Delta(2\beta)\ge 0$ and $d(D) \ge d_{z, w}, d_{w, z}$. The third inequality comes from the fact that $\alpha_{1, D}(z) \le (1/5) \log d(D)$ by \eqref{eq:mu_D_size}. 
		
		For the first integral $I_1$, we have $d_{z,w} \wedge d_{w,z} < 1$. Thus we can have three situations, as follows
		\begin{align}
			\Lambda = &  \{(z,w)  \mid d_{z,w}\vee d_{w,z} < 1 \} \cup \{(z,w) \mid d_{z,w} < 1, d_{w,z} \ge 1\} \cup  \{(z,w)  \mid d_{w,z} < 1, d_{z, w} \ge 1 \} 
			\nonumber \\
			=: & \Lambda_1 \cup \Lambda_2 \cup \Lambda_3.
		\end{align} 
		We consider the cases separately. On $\Lambda_1$ we have $d_{z,w} < 1$ and $d_{w,z}< 1$ and therefore one can write, $\alpha_{d_{z,w}}(z) = \alpha_{d_{z,w}, 1} (z) +\alpha_{1} (z) \le (1/5) \log (1/d_{z,w}) + \alpha_{1} (z)$ and likewise for $\alpha_{d_{w,z}}(w)$ (by \eqref{eq:mu_D_size}). So,
		\begin{align}\label{eq:L^1_6}
			I_{11} =& \int_{\Lambda_1}  (d_{z,w}d_{w,z})^{2 \Delta(\beta)} \exp\{(5\Delta(2\beta) + 10\Delta(\beta)) (\alpha_{d_{z,w}}(z) + \alpha_{d_{w,z}}(w))\} \, dz \, dw 
			\nonumber \\
			\le &\int_{\Lambda_1} (d_{z,w}d_{w,z})^{2 \Delta(\beta)} e^{ (5\Delta(2\beta) + 10\Delta(\beta)) \left[\frac{1}{5} \log \frac{1}{d_{z,w}d_{w,z}} \right]} \, e^{(5\Delta(2\beta) + 10\Delta(\beta))[\alpha_{1}(z) + \alpha_{1}(w)]} \, dz \, dw
			\nonumber \\
			= & 	d(D)^{2 (\Delta(2\beta) + 2\Delta(\beta))} \int_{\Lambda_1} (d_{z,w}d_{w,z})^{-\Delta(2\beta) } \, dz \, dw  .
		\end{align}
		 For $\Lambda_2$ we have,
		\begin{align}\label{eq:L^1_7}
			I_{12} = &  \int_{\Lambda_2} (d_{z,w}d_{w,z})^{2 \Delta(\beta)}  \exp\{ (5\Delta(2\beta) + 10\Delta(\beta)) (\alpha_{d_{z,w}}(z) + \alpha_{d_{w,z}}(w))\} \, dz \, dw 
			\nonumber \\
			\le & d(D)^{2\Delta(\beta)}\int_{\Lambda_2} d_{z,w}^{2 \Delta(\beta)}  e^{(5\Delta(2\beta) + 10\Delta(\beta)) \left[ \frac{1}{5}\log \frac{1}{d_{z,w}} +\alpha_{1}(z)\right]} e^{(5\Delta(2\beta) + 10\Delta(\beta)) [ \alpha_{1}(w)]} \, dz \,  dw
			\nonumber \\
			= & d(D)^{2\Delta(\beta)}\int_{\Lambda_2} d_{z,w}^{2 \Delta(\beta)} \left( \frac{1}{d_{z,w}}\right)^{(\Delta(2\beta) + 2\Delta(\beta))} e^{(5\Delta(2\beta) + 10\Delta(\beta)) [ \alpha_{1}(z)+ \alpha_{1}(w)]}\, dz \,  dw
			\nonumber \\
			\le & d(D)^{2\Delta(\beta)}\exp\left(2(\Delta(2\beta) + \Delta(\beta)) \log d(D) \right)  \int_{\Lambda_2} d_{z,w}^{-\Delta(2\beta) } \, dz  \, dw
			\nonumber \\
			= & d(D)^{2(\Delta(2\beta) + \Delta(\beta)) + 2\Delta(\beta)}  \int_{\Lambda_2} d_{z,w}^{-\Delta(2\beta)} \, dz \,  dw.
		\end{align}
		One can similarly show that 
		\begin{align}\label{eq:L^1_8}
			I_{13}  
			\le  & d(D)^{2(\Delta(2\beta) + \Delta(\beta)) + 2\Delta(\beta)}  \int_{\Lambda_2} d_{w,z}^{-\Delta(2\beta)} \, dz \,  dw.   
		\end{align}
        We now have to show that the integrals appearing in r.h.s. of \eqref{eq:L^1_6}, \eqref{eq:L^1_7} and \eqref{eq:L^1_8} are finite. For this we need the following elementary result.
        
		\begin{lem}
		Let $f : \D \to D$ be a conformal map from the unit disk onto $D$. For $z,w \in \D$ we use the notation, $d^{\D}_{z,w} = |z-w| \wedge d(z, \partial \D) = |z-w| \wedge (1-|z|)$. Then for all $z, w \in \D$ we have,
		\begin{align}\label{eq:L^1_9}
			d^{\D}_{z, w} \le \frac{4}{|f'(z)|} d_{f(z), f(w)}.
		\end{align}       
        \end{lem}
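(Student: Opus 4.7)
The plan is to apply Koebe's one-quarter theorem together with the Koebe growth theorem to a rescaled version of $f$ around the point $z$. Fix $z \in \D$, so $1-|z|>0$, and define
\[
\tilde g(\zeta) := \frac{f(z + (1-|z|)\zeta) - f(z)}{(1-|z|)\,f'(z)}, \qquad \zeta \in \D.
\]
Because $|z+(1-|z|)\zeta| \le |z|+(1-|z|)|\zeta|<1$, this $\tilde g$ is well-defined and univalent on $\D$ with $\tilde g(0)=0$ and $\tilde g'(0)=1$, so the classical Koebe estimates apply directly to it.

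Koebe's one-quarter theorem gives $\tilde g(\D)\supset B(0,1/4)$, which rescales back to $f(B(z,1-|z|))\supset B(f(z),(1-|z|)|f'(z)|/4)$. Since $f(B(z,1-|z|))\subset D$, this yields
\[
d(f(z),\partial D) \ge \frac{(1-|z|)|f'(z)|}{4}, \qquad \text{i.e.,} \qquad 1-|z| \le \frac{4\,d(f(z),\partial D)}{|f'(z)|}.
\]
The growth theorem gives $|\tilde g(\zeta)|\ge |\zeta|/(1+|\zeta|)^2\ge |\zeta|/4$ for $|\zeta|\le 1$, which with $\zeta = (w-z)/(1-|z|)$ produces
\[
|f(w)-f(z)| \ge \frac{|f'(z)|\,|w-z|}{4}\quad \text{whenever } |w-z|\le 1-|z|.
\]

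The argument then splits into two regimes. If $|z-w|\le 1-|z|$, then $d^{\D}_{z,w}=|z-w|$, and the two displays above directly give both $|z-w|\le (4/|f'(z)|)|f(w)-f(z)|$ and $|z-w|\le 1-|z|\le (4/|f'(z)|)\,d(f(z),\partial D)$. If instead $|z-w|>1-|z|$, then $d^{\D}_{z,w}=1-|z|$ and the $d(f(z),\partial D)$-bound is already in hand; for the matching bound involving $|f(z)-f(w)|$ I invoke \emph{injectivity}: since $w\notin\overline{B(z,1-|z|)}$, the image $f(w)$ lies outside $f(B(z,1-|z|))$, and in particular outside the sub-disk $B(f(z),(1-|z|)|f'(z)|/4)$ from the one-quarter step, so $|f(w)-f(z)|\ge (1-|z|)|f'(z)|/4$. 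Taking the minimum over the two right-hand quantities in each regime gives $d^{\D}_{z,w} \le (4/|f'(z)|)\,d_{f(z),f(w)}$.

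The main obstacle is the second regime $|z-w|>1-|z|$: a direct application of the Koebe growth theorem only produces a lower bound on $|f(w)-f(z)|$ of order $|f'(z)|(1-|z|)|w-z|/4$, whose extra $(1-|z|)$ factor degenerates as $z$ approaches $\partial\D$. What overcomes this is the \emph{geometric} content of Koebe's one-quarter theorem combined with injectivity of $f$: the disk $B(f(z),(1-|z|)|f'(z)|/4)$ sits inside $f(B(z,1-|z|))$, so any $f(w)$ whose preimage $w$ lies outside $B(z,1-|z|)$ is automatically separated from $f(z)$ by at least the desired amount.
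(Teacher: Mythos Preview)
Your proof is correct and follows essentially the same approach as the paper: split into the two cases $|z-w| < 1-|z|$ and $|z-w| \ge 1-|z|$, and use Koebe-type distortion estimates together with injectivity of $f$. The only cosmetic difference is in Case~I: you invoke the Koebe growth theorem on the rescaled map $\tilde g$ to obtain $|f(w)-f(z)| \ge \tfrac{1}{4}|f'(z)|\,|z-w|$, whereas the paper gets the same inequality by applying the Koebe one-quarter theorem directly to the smaller disk $B(z,|z-w|)$ and noting that $f(w)\in\partial f(B(z,|z-w|))$; Case~II is handled identically in both.
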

        
        \begin{proof}
		To see this we consider two cases, (I) $|z-w| < 1-|z|$ i.e. $d^{\D}_{z,w} = |z-w|$ and (II) $|z-w| \ge 1-|z|$, i.e. $d^{\D}_{w,z} =1-|z|$. In case (I), let $B = B(z, |z-w|)$ be the disk around $z$ with radius $|z-w|$, and note that $B \subset \D$. By the Koebe 1/4 theorem we have,
		\begin{align}
			d(f(z), \partial f(B)) \ge \frac{|f'(z)|}{4} d(z, \partial B) = \frac{|f'(z)|}{4} |z-w|.
		\end{align} 
		But note that $|f(z) - f(w)| \ge d(f(z), \partial f(B))$ since $f(w) \in \partial f(B)$. We also observe that $d(f(z), D) \ge d(f(z), \partial f(B))$. Combining these we can write,
		\begin{align}
			d_{f(z), f(w)} = d(f(z), D) \wedge |f(z)-f(w)| \ge d(f(z), \partial f(B)) \ge \frac{|f'(z)|}{4} |z-w|,
		\end{align}
		which implies \eqref{eq:L^1_9}. For case (II), we let $B' = B(z, 1-|z|)$ and again by the Koebe 1/4 theorem,
		\begin{align}
			d(f(z), \partial f(B')) \ge \frac{|f'(z)|}{4}  d(z, \partial B') = \frac{|f'(z)|}{4}  (1-|z|).
		\end{align} 
		But since $|f(w)- f(z)| \ge d(f(z), \partial f(B'))$  (as $w\notin B'$, $f(w) \notin f(B')$) and $d(f(z), D) \ge d(f(z), \partial f(B'))$, we can obtain \eqref{eq:L^1_9} as in the previous case.
        \end{proof}
        
		Let $f: \D \to D$ be a conformal map as in the above lemma. As $D$ has a smooth boundary and $f$ is analytic, $f'$ admits a continuous extension from $\bar{\D}$ to $\bar{ D}$ (Ref. \cite[Theorem 3.5]{pommerenke92}). In particular, we have $\lVert f' \rVert_{\infty} := \lVert f' \rVert_{L^{\infty}(\bar{\D})} < \infty$. We also have,
		\begin{align}
			(d^{\D}_{z,w})^{-\Delta(2\beta)} \le |z-w|^{-\Delta(2\beta)} + (1-|z|)^{-\Delta(2\beta)}.
		\end{align}
		since the LHS is the minimum of the two terms of the r.h.s.. Applying these to \eqref{eq:L^1_6} we can compute by change of variables,
		\begin{align} \label{eq:L^1_10}
			I_{11} \le &  c_D \int_{D^2} (d_{z,w}d_{w,z})^{-\Delta(2\beta)} \, dz \, dw
			\le  c_D \int_{\D^2} (d_{f(z),f(w)}d_{f(w),f(z)})^{-\Delta(2\beta)} |f'(z)f'(w)|^2 \, dz \, dw
			\nonumber \\
			\le & c_D \int_{\D^2} \left(\frac{|f'(z)|}{4}d^{\D}_{f(z),f(w)}\frac{|f'(w)|}{4}d^{\D}_{f(w),f(z)}\right)^{-\Delta(2\beta) } |f'(z)f'(w)|^2 \, dz \, dw
			\nonumber \\
			\le & c_D \lVert f' \rVert^{2(2-\Delta(2\beta))}_{\infty} \int_{\D^2} \left(d^{\D}_{f(z),f(w)}d^{\D}_{f(w),f(z)}\right)^{-\Delta(2\beta)}  \, dz \, dw
			\nonumber \\
			\le & c_{D,f} \int_{\D^2} \left( |z-w|^{-\Delta(2\beta)} + (1-|z|)^{-\Delta(2\beta)} \right) 
			 \left( |z-w|^{-\Delta(2\beta)} + (1-|w|)^{-\Delta(2\beta)} \right)  \, dz \, dw			
		\end{align}
		Note that 
		\begin{align}
			\int_{\D^2} |z-w|^{-2\Delta(2\beta) } \, dz \, dw  \le & \int_{\D} \,dz \int_{B(z;2)} |z-w|^{-2\Delta(2\beta) } \, dw
			\nonumber \\
			= &   \int_{\D} \,dz \int_0^{2\pi} \int_0^2 r^{1-2\Delta(2\beta)} \, dr \, d\theta < \infty
		\end{align}
		when $\Delta(2\beta) < 1$, which is our assumption. For similar reasons,
		\begin{align}
			& \int_{\D^2} |z-w|^{-\Delta(2\beta)} (1-|z|)^{-\Delta(2\beta)}\, dz \, dw, \quad \int_{\D^2} |z-w|^{-\Delta(2\beta) } (1-|w|)^{-\Delta(2\beta)}\, dz \, dw 
			\nonumber \\
			\text{ and } & \int_{\D^2} (1-|w|)^{-\Delta(2\beta)} (1-|z|)^{-\Delta(2\beta) }\, dz \, dw 
		\end{align}
		are also finite. By \eqref{eq:L^1_10} this implies that $I_{11} < \infty$. 
		
		For $I_{12}$ and $I_{13}$ one can proceed with similar arguments to show that they are also finite. This finishes the proof of the fact that the r.h.s. of \eqref{eq:L^1_2} is integrable.

    \subsection{Proof of Proposition \ref{prop:n_point}} \label{app:proof_prop:n_point}
		
		 (i) We shall use the notation $S_0 = \{ z_1, \ldots, z_n\}$. Let $\mc{L} = \mc{L}_{\lambda, D}$ be the realization of the BLS in $D$ and let $X, \{   X_{\gamma}\}_{\gamma \in \eta}$ be i.i.d. Ber(1/2) random variables taking values $\pm 1$. For a set $S = \{ z_{i_1}, \ldots, z_{i_k}\} \subseteq S_0$, $S \neq \emptyset$, we define $I_S = \{ i_1, \ldots, i_k\}$. Also, for such a set $S$, let 
		\begin{align}\label{eq:conv_n_pt_0.1}
			A(S, S_0) = A_D(S, S_0) = & \{ \gamma \in S(D) \mid S \subset \bar{\gamma}, \, S_0\setminus S \subset D \setminus \bar{\gamma} \bar\}
			\nonumber \\
			A_{\delta}(S, S_0) = A_{\delta, D}(S, S_0) = & \{ \gamma \in A_D(S, S_0) \mid d(\gamma) \ge \delta\}
		\end{align}
		be the collection of all loops in $D$ that cover exactly the points of $S$ and its subcollection containing only loops of diameter larger than $\delta$.
		
		Let $\mc{K}$ be the collection of all functions $\bm{k}: \{ S \subset S_0\mid S\neq \emptyset\} \to \N_0$. For each such $\bm{k}$ we shall define $\mc{E}_{\bm{k}}$ to be the event that the loops in the realization $\mc{L}$ of the BLS cover the points in $S_0$ exactly according to $\bm{k}$. This is precisely defined as follows, 
		\begin{align}
			\mc{E}_{\bm{k}} = \cap_{S \subseteq S_0, S \neq \emptyset}\{  | \mc{L} \cap A_D(S, S_0)| = \bm{k}(S) \}.
		\end{align}
		Similarly, 
		\begin{align}
			\mc{E}^{\delta}_{\bm{k}} = \cap_{S \subseteq S_0, S \neq \emptyset}\{   | \mc{L} \cap A_{\delta, D}(S, S_0)| = \bm{k}(S) \}.
		\end{align}
		Note that $\mc{E}_{\bm{k}_1} \cap \mc{E}_{\bm{k}_2} = \emptyset$ whenever $\bm{k}_1 \neq \bm{k}_2 \in \mc{K}$. 
		
		Since $z_j \in S_0$, we have by definition \eqref{eq:layering_number_uv},
		\begin{align}
			N_{\delta, D}(z_j) = \sum_{\gamma \in A_{\delta, D}(z_j)\cap \mc{L}} X_{\gamma} = \sum_{S: S \subset S_0, z_j \in S} \sum_{\gamma \in A_{\delta}(S, S_0)\cap \mc{L}} X_{\gamma}.
		\end{align}		
		Note that we can write,
		\begin{align}\label{eq:conv_n_pt_1}
			\E\left(\prod_{j=1}^n V^{\delta}_{\beta_j , D}(z_j)\right) = & \E \left( e^{\sum_{j=1}^n \beta_j N_{\delta, D}(z_j)}\right)	
			= \sum_{\bm{k} \in \mc{K}} \E\left[ \ind_{\mc{E}^{\delta}_{\bm{k}} } e^{\sum_{j=1}^n \beta_j N_{\delta, D}(z_j)} \right]		
			\nonumber \\
			= & \sum_{\bm{k} \in \mc{K}} \E\left[  e^{\sum_{j=1}^n \beta_j N_{\delta, D}(z_j)} \mid \mc{E}^{\delta}_{\bm{k}} \right]	\pr(\mc{E}^{\delta}_{\bm{k}}).		
		\end{align}			
		Now for each $\bm{k} \in \mc{K}$,
		\begin{align}
			\E\left[ e^{\sum_{j=1}^n \beta_j N_{\delta, D}(z_j)} \mid \mc{E}^{\delta}_{\bm{k}} \right]	
			= & \E\left[ e^{\sum_{S\subset S_0, S\neq \emptyset} \sum_{\gamma \in A_{\delta}(S, S_0)\cap \mc{L}} \left(\sum_{j \in I_S} \beta_j\right) X_{\gamma}} \mid \mc{E}^{\delta}_{\bm{k}}\right]
			\nonumber \\
			= & \prod_{S\subset S_0, S\neq \emptyset} \E\left[ e^{ \sum_{\gamma \in A_{\delta}(S, S_0)\cap \mc{L}} \left(\sum_{j \in I_S} \beta_j\right) X_{\gamma} } \mid \mc{E}^{\delta}_{\bm{k}} \right]
			\nonumber \\
			= &  \prod_{S\subset S_0, S\neq \emptyset} \left[\E \left(e^{ \left(\sum_{j \in I_S} \beta_j\right) X} \right) \right]^{\bm{k}(S)}
			\nonumber \\
			= & \prod_{S\subset S_0, S\neq \emptyset} \left[\cosh\left(\sum_{j \in I_S} \beta_j\right)\right]^{\bm{k}(S)}
		\end{align}
		The first equality is obtained by rearranging the summations. The second uses the fact that if $S_1\neq S_2$ are non-empty subsets of $S_0$, then no loop can appear simultaneously in both $A_{\delta}(S_1, S_0)\cap \mc{L}$ and $A_{\delta}(S_2, S_0)\cap \mc{L}$ and thus the respective sums are independent. The third follows from the definition of $\mc{E}^{\delta}_{\bm{k}}$. Also note that, with the notation $\alpha_{\delta, D}(S, S_0) = \mu_D(A_{\delta, D}(S, S_0))$, the random variables $|\mc{L} \cap A_{\delta, D}(S, S_0)|$ are distributed according to $\Poi(\lambda \alpha_{\delta, D}(S,S_0))$. Using the above mentioned independence, we thus get,
		\begin{align}
			\pr(\mc{E}^{\delta}_{\bm{k}}) =   \prod_{S\subset S_0, S\neq \emptyset} \pr( | \mc{L} \cap A_{\delta, D}(S, S_0)| = \bm{k}(S) )
	       =\prod_{S\subset S_0, S\neq \emptyset} e^{-\lambda \alpha_{\delta, D}(S, S_0)} \frac{(\lambda \alpha_{\delta, D}(S, S_0))^{\bm{k}(S)}}{\bm{k}(S)! }.
		\end{align}		
		Therefore from \eqref{eq:conv_n_pt_1} we have,
		\begin{align}
			\E\left(\prod_{j=1}^n V^{\delta}_{\beta_j , D}(z_j)\right) 	= & \sum_{\bm{k} \in \mc{K}} \E\left[  e^{\sum_{j=1}^n \beta_j N_{\delta, D}(z_j)} \mid \mc{E}^{\delta}_{\bm{k}} \right]	\pr(\mc{E}^{\delta}_{\bm{k}})
			\nonumber \\
			= & \sum_{\bm{k} \in \mc{K}} \prod_{S\subset S_0, S\neq \emptyset} \left[\cosh\left(\sum_{j \in I_S} \beta_j\right)\right]^{\bm{k}(S)} e^{-\lambda \alpha_{\delta, D}(S, S_0)} \frac{(\lambda \alpha_{\delta, D}(S, S_0))^{\bm{k}(S)}}{\bm{k}(S)! }
			\nonumber \\
			& \hspace{1cm} \prod_{j=1}^n \left[\cosh\left(\beta_j\right)\right]^{\bm{k}(z_j)} e^{-\lambda \alpha_{\delta, D}(z_j, S_0)} \frac{(\lambda \alpha_{\delta, D}(z_j, S_0))^{\bm{k}(z_j)}}{\bm{k}(z_j)! } .
		\end{align}
        Rearranging the terms in the r.h.s. we get,
        \begin{align}\label{eq:conv_n_pt_5}
			\E\left(\prod_{j=1}^n V^{\delta}_{\beta_j , D}(z_j)\right) = & \prod_{S\subset S_0, |S|>1} \left[ \sum_{\bm{k}(S)=0}^{\infty} e^{-\lambda \alpha_{\delta, D}(S, S_0)} \frac{\left(\lambda \alpha_{\delta, D}(S, S_0) \cosh\left(\sum_{j \in I_S} \beta_j\right)\right)^{\bm{k}(S)}}{\bm{k}(S)! } \right]
			\nonumber \\
			= & \prod_{S\subset S_0, |S|>1}  \exp\left[ -\lambda \alpha_{\delta, D}(S, S_0) \left\{ 1- \cosh\left(\sum_{j \in I_S} \beta_j\right)\right\}\right]
			\nonumber \\
			& \hspace{1cm} \prod_{j=1}^n \exp\left[ -\lambda \alpha_{\delta, D}(z_j, S_0) \left\{ 1- \cosh\left( \beta_j\right)\right\}\right]
            \nonumber \\
            = &  \prod_{S\subset S_0, |S|>1}  e^{10 \Delta\left(\sum_{j \in I_S} \beta_j\right) \alpha_{\delta, D}(S, S_0)}
			\prod_{j=1}^n e^{10\Delta(\beta_j)\alpha_{\delta, D}(z_j, S_0)}
		\end{align}
		
		If $m = \min_{i\neq j}|z_i-z_j|\wedge \min_i d(z_i, \partial D)$ then for $\delta < m$ and each $j$ we have, $\alpha_{\delta, D}(z_j, S_0) = \frac{1}{5} \log \frac{m}{\delta} + \alpha_{m, D} (z_j, S_0)$. Applying this observation to \eqref{eq:conv_n_pt_5} we get,
		\begin{align}\label{eq:conv_n_pt_6}
			\E\left(\prod_{j=1}^n V^{\delta}_{\beta_j , D}(z_j)\right) = & \prod_{S\subset S_0, |S|>1}  e^{10 \Delta\left(\sum_{j \in I_S} \beta_j\right) \alpha_{\delta, D}(S, S_0)}
			\prod_{j=1}^n \left(\frac{m}{\delta}\right)^{2\Delta(\beta_j)} e^{10\Delta(\beta_j)\alpha_{m, D}(z_j, S_0)}.
		\end{align}
		Note that, in the above, $\alpha_{\delta , D} (S, S_0) = \alpha_{D}(S, S_0)$ whenever $|S|>1$ and $\delta <m$. Therefore,
		\begin{align}\label{eq:conv_n_pt_7}
			 & \phi_D(z_1, \ldots, z_n ; \bm{\beta}) : =  \lim_{\delta \downarrow 0} \delta^{2\sum_{j=1}^n \Delta(\beta_j) }\E\left(\prod_{j=1}^n V^{\delta}_{\beta_j , D}(z_j)\right)	
			\nonumber \\
            = & m^{2\sum_{j=1}^n \Delta(\beta_j) } \prod_{S\subset S_0, |S|>1}  e^{10 \Delta\left(\sum_{j \in I_S} \beta_j\right) \alpha_{D}(S, S_0)}         
            \prod_{j=1}^n e^{10\Delta(\beta_j)\alpha_{m, D}(z_j, S_0)}			
		\end{align}
		exists.
        
       (ii) Now we can show the conformal covariance of the object obtained above. Let $S'_0 = \{ z'_1, \ldots, z'_n\}$. Similar to the sets defined in \eqref{eq:conv_n_pt_0.1} we can define the sets $A_{D'}(S', S'_0)$, $A_{\delta, D'} (S', S'_0)$ when $S'\subseteq S'_0$ and denote by $\alpha_{D'}(S', S'_0)$, $\alpha_{\delta, D'}(S', S'_0)$ their $\mu_{D'}$-mass. Since $f$ is conformal, we know that $\mu_{D'} = f \circ \mu_D$ and it thus follows from the definition that, $\alpha_{D'}(S', S'_0) = \alpha_{D}(S, S_0)$ whenever $S\subset S_0, |S|>1$ and $S'= f(S)$. 

       Let $m' = \min_{i\neq j}|z'_i - z'_j| \wedge d(z'_j, \partial D')$. When $\delta< m \wedge m'$ we can use the expression obtained in \eqref{eq:conv_n_pt_5} and its analogous expression for $D'$ to get, 
       \begin{align} \label{eq:conv_n_pt_8}
           \frac{\E\left(\prod_{j=1}^n V^{\delta}_{\beta_j , D'}(z'_j)\right) }{\E\left(\prod_{j=1}^n V^{\delta}_{\beta_j , D}(z_j)\right) } =  & \frac{ \prod_{S'\subset S'_0, |S'|>1}  e^{10 \Delta\left(\sum_{j \in I_S} \beta_j\right) \alpha_{\delta, D'}(S', S'_0)}
			\prod_{j=1}^n e^{10\Delta(\beta_j)\alpha_{\delta, D'}(z'_j, S'_0)} }
            {\prod_{S\subset S_0, |S|>1}  e^{10 \Delta\left(\sum_{j \in I_S} \beta_j\right) \alpha_{\delta, D}(S, S_0)}
			\prod_{j=1}^n e^{10\Delta(\beta_j)\alpha_{\delta, D}(z_j, S_0)}}
            \nonumber \\
            = & \prod_{j=1}^n e^{10\Delta(\beta_j)[\alpha_{\delta, D'}(z'_j, S'_0)-\alpha_{\delta, D}(z_j, S_0)]}
       \end{align}

       We therefore need to analyze the differences $\alpha_{\delta, D}(z_j, S_0) - \alpha_{\delta, D'}(z'_j, S'_0)$ appearing in the above expression. This can be found in \cite[p. 498]{cgk16}, where it was shown that,
       \begin{align}
           \alpha_{\delta, D'}(z'_j, S'_0)-\alpha_{\delta, D}(z_j, S_0) = \frac{1}{5} \log|f'(z_j)| - o(1) \text{ as } \delta \to 0.
       \end{align}
       for every $j$. Plugging the above in \eqref{eq:conv_n_pt_8} we have,
       \begin{align}
           \frac{\phi_{D'}(z'_1, \ldots, z'_n; {\bm \beta})}{\phi_{D}(z_1, \ldots, z_n; {\bm \beta})} = \lim_{\delta \to 0} \frac{\E\left(\prod_{j=1}^n V^{\delta}_{\beta_j , D'}(z'_j)\right) }{\E\left(\prod_{j=1}^n V^{\delta}_{\beta_j , D}(z_j)\right) } =\prod_{j=1}^n |f'(z_j)|^{2\Delta(\beta_j)}, 
       \end{align}
       which proves \eqref{eq:conf_cov_n_pt}.

    \subsection{Proof of Proposition \ref{prop:GLF_facts}} \label{app:proof_prop_GLF_facts}
    (i) Since $G_0(A_{\delta, D}(z))$ is a centered Gaussian random variable with variance $\E[G_0(h^{\delta}_z)^2] = \alpha_{\delta}(z)$, clearly $\E[W^{\delta}_{\xi} (z)] = e^{\frac{\xi^2}{2}\alpha_{\delta}(z)}$. Thus for $0<\delta < d_z$, we have by \eqref{eq:mu_size_uv_ir},
		\begin{align}
			\E[\tilde{W}^{\delta}_{\xi}(z)] 
			=  \delta^{2\Delta_{\xi}} e^{\frac{\xi^2}{2} [\alpha_{\delta, d_z }(z) + \alpha_{d_z }(z)]}
			=  \delta^{2\Delta_{\xi}} e^{\frac{\xi^2}{2} \left[\frac{1}{5} \log\frac{d_z}{\delta} + \alpha_{d_z }(z)\right]}
		      =  d_z^{2\Delta_{\xi} } e^{\frac{\xi^2}{2} \alpha_{d_z}(z)},
		\end{align}
		which gives \eqref{eq:W_1_pt_1}.

        (ii) Suppose $|z-w|\wedge d(z, \partial D) \wedge d(w, \partial D) = d_{z,w}\wedge d_{w,z}>\delta, \delta'>0$. By the definition of  $G_0$ we have, $\E[G_0(A_{\delta}(z)) G_0( A_{\delta'}(w))] = \mu_D(A_{\delta}(z) \cap A_{\delta'}(w) ) = \mu_D(A_{\delta\vee \delta'}(z,w)) = \alpha_{|z-w|} (z,w)$. By using \eqref{eq:mu_size_uv_ir} again we get,
		\begin{align}\label{eq:W_conv_3}
			\E (\tilde{W}^{\delta}_{\xi}(z) \tilde{W}^{\delta'}_{\xi}(w))
			= & (\delta\delta')^{2\Delta_{\xi}} e^{ \frac{\xi^2}{2} \E[(G_0(A_{\delta}(z)) + G_0(A_{\delta'}(w)))^2] }
			\nonumber \\
			= & (\delta\delta')^{2\Delta_{\xi}} e^{\frac{\xi^2}{2} \left( \alpha_{\delta , d_{z,w}} (z) + \alpha_{\delta', d_{w,z}}(w)\right)} e^{\frac{\xi^2}{2} \left( \alpha_{d_{z,w}}(z) + \alpha_{d_{w,z}}(w)+ 2 \alpha_{|z-w|}(z,w) \right) }
			\nonumber \\
			= & (\delta\delta')^{2\Delta_{\xi}} e^{\frac{\xi^2}{2} \left( \frac{1}{5}\log \frac{d_{z,w}d_{w,z}}{\delta \delta'}\right)} 
                e^{ \frac{\xi^2}{2} \left( \alpha_{d_{z,w}}(z) + \alpha_{d_{w,z}}(w)+ 2 \alpha_{|z-w|}(z,w) \right)}
			\nonumber \\
			= & (d_{z,w}d_{w,z})^{2\Delta_{\xi}} e^{\frac{\xi^2}{2}  \left( \alpha_{d_{z,w},D}(z) + \alpha_{d_{w,z}}(w)+ 2 \alpha_{|z-w|}(z,w) \right) }.
		\end{align}		
		This is the required expression from \eqref{eq:W_2_pt}.	
		
		(iii) To prove that the above convergence can be upgraded from point-wise to convergence in $L^1(D\x D)$ note that, when $d_{z,w}\wedge d_{w,z}>\delta, \delta'>0$, by \eqref{eq:W_conv_3} we have, using the fact that $a\wedge b \le \frac{1}{2}(a+b)$,
		\begin{align}\label{eq:W_conv_6}
			 \E (\tilde{W}^{\delta}_{\xi}(z) \tilde{W}^{\delta'}_{\xi}(w)) 
			=& (d_{z,w}d_{w,z})^{2\Delta_{\xi}} e^{\frac{\xi^2}{2} \left( \alpha_{d_{z,w}}(z) + \alpha_{d_{w,z}}(w)+ 2 \alpha_{|z-w|}(z,w) \right)}
			\nonumber \\
			\le & d(D)^{4\Delta_{\xi}} e^{ \frac{\xi^2}{2} \left( \alpha_{d_{z,w}}(z) + \alpha_{d_{w,z}}(w)+ 2 \alpha_{d_{z,w}}(z)\wedge  \alpha_{d_{w,z}}(w) \right)}
			\nonumber \\
			\le & d(D)^{4\Delta_{\xi}} e^{\xi^2 \left( \alpha_{d_{z,w}}(z) + \alpha_{d_{w,z}}(w)\right)}.
		\end{align}
		Calculations such as the ones appearing in \eqref{eq:L^1_3} - \eqref{eq:L^1_7} show that the r.h.s. of \eqref{eq:W_conv_6} is integrable, i.e. $\int_D \int_D e^{20\Delta_{\xi} \left( \alpha_{d_{z,w},D}(z) + \alpha_{d_{w,z}, D}(w)\right)} \, dz \, dw < \infty$
		for $\Delta_{\xi} < 1/4$. This and \eqref{eq:W_2_pt} prove \eqref{eq:L^1_conv_W_0}.

    \section{Proofs from Section \ref{sec:PLF_to_GLF}} \label{app:proofs_PLF_to_GLF}


    \subsection{Proof of Proposition \ref{prop:chaos_decomp_layering}}\label{app:proof_chaos_decomp_layering}
    We begin by observing that when $\varphi \in C_b(D)$, for each $\delta>0$, clearly $\tilde{V}^{\delta}_{\lambda, \beta}(\varphi)$ is in $L^2(\pr)$. Hence by an application of Proposition \ref{prop:wi_chaos_poisson} and Lemma \ref{lem:chaos_decomp_layering_uv} we know that $\tilde{V}^{\delta}_{\lambda,\beta}(\varphi)$ has a chaos decomposition as in \eqref{eq:chaos_exp_PRM_gen} with kernels     \begin{align}\label{eq:chaos_decomp_layering_4}
			f^{\delta, \varphi}_{q, \lambda, \beta} (x_1, \ldots, x_q) = \frac{1}{q!}  \int_D \varphi(z) \E(\tilde{V}^{\delta}_{\beta}(z)) \prod_{i=1}^{q} \left(e^{\beta h^{\delta}_z(x_i)}-1\right) \, dz
			=:  \frac{1}{q!} \int_D F^{\delta, \varphi}_{q; x_1, \ldots, x_q; \lambda, \beta} (z ) \,  dz .
	\end{align}
    where $x_1, \ldots, x_q \in S_{\pm}$. The form of the kernels follows from the r.h.s. of \eqref{eq:chaos_decomp_layering_uv}. Now, calculations similar to the ones in \eqref{eq:V_delta_cauchy_1} and \eqref{eq:V_delta_cauchy_2} can be used to show that $V_{\lambda,\beta}(\varphi)$ exists as an $L^2(\pr)$-limit of $\tilde{V}^{\delta}_{\lambda,\beta}(\varphi)$ as $\delta \to 0$. In particular, $V_{\lambda, \beta}(\varphi)$ is square-integrable. So by Proposition \ref{prop:wi_chaos_poisson}, $V_{\lambda, \beta}(\varphi)$ has a decomposition as in \eqref{eq:chaos_exp_PRM_gen} and each kernel $f^{\varphi}_{\lambda, \beta, q}$ (for $q\ge 1$) is in $L^2(\nu_{\lambda}^q)$.
    
    Since $\varphi$, $\lambda$ and $\beta$ are fixed throughout this proof, we will use the notations $f^{\delta}_{q}= f^{\delta, \varphi}_{q, \lambda, \beta}$ and $F^{\delta, \varphi}_{q; x_1, \ldots, x_q; \lambda, \beta} = F^{\delta}_{q; x_1, \ldots, x_q}$. Then by \eqref{eq:chaos_decomp_layering_uv} and Fubini's theorem we have,
		\begin{align}\label{eq:chaos_decomp_layering_3}
			\tilde{V}^{\delta}_{\beta}(\varphi) 
	        = & \delta^{2\Delta(\beta)} \int_D \varphi(z) \E(V^{\delta}_{\beta}(z)) \left( 1 + \sum_{q=1}^{\infty} \frac{1}{q!} I_q^{N_{\lambda}} \left[(e^{\beta h^{\delta}_z(\cdot)}-1)^{\otimes q}\right]\right) \, dz
			\nonumber \\
			= & \delta^{2\Delta(\beta)} \E[V^{\delta}_{\beta}(\varphi)] + \sum_{q=1}^{\infty} I_q^{N_{\lambda}} (f^{\delta}_{ q})
		\end{align}	
		As $\delta \to 0$, since $h^{\delta}_z (x) = \epsilon \ind_{A_{\delta}(z)}(\gamma) \to \epsilon \ind_{A_0(z)} (\gamma) =  h_z(x)$ for every $x =(\epsilon, \gamma) \in S_{\pm}$, we have for all $z \in D$, as $\delta \to 0$,
		\begin{align}\label{eq:chaos_decomp_layering_5}
			F^{\delta}_{q; x_1, \ldots, x_q} (z ) = & \varphi(z) \E\left(\tilde{V}^{\delta}_{\beta}(z)\right) \prod_{i=1}^{q} \left(e^{\beta h^{\delta}_z(x_i)}-1\right)
			\to & \varphi(z) \langle V_{\beta}(z) \rangle \prod_{i=1}^q\left(e^{\beta h_z(x_i)} -1\right)
		\end{align}
        applying  eq. \eqref{eq:conv_1_pt_0}. We also have, when $\delta < d_z$, 
		\begin{align}\label{eq:chaos_decomp_layering_6}
			|F^{\delta}_{q; x_1, \ldots, x_q}(z)| 
			\le  \lVert \varphi \rVert_{\infty} d_z ^{2\Delta(\beta)} e^{10\Delta(\beta) \alpha_{d_z,D}(z)} e^{q|\beta|}.
		\end{align}
		This can be seen from \eqref{eq:conv_1_pt} and the fact that $|e^{\beta h^{\delta}_z(x_i)}-1| \le |e^{\beta} -1|\vee |e^{-\beta} -1| \le e^{|\beta|}$ for all $i =1, \ldots, q$. The r.h.s. of \eqref{eq:chaos_decomp_layering_6} is integrable, since \eqref{eq:mu_D_size} gives
		\begin{align}\label{eq:chaos_decomp_layering_7}
            \int_D d_z ^{2\Delta(\beta)} e^{10\Delta(\beta) \alpha_{d_z,D}(z)} \, dz
		      \le  \int_D d_z ^{2\Delta(\beta)} e^{2 \Delta(\beta)\log\frac{d(D)}{d_z}} \, dz
		    = d(D)^{2\Delta(\beta)} |D| <\infty.
		\end{align}		
		By the dominated convergence theorem, this shows that the point-wise convergence in \eqref{eq:chaos_decomp_layering_5} can be upgraded to $L^1(D, dz)$-convergence and therefore by \eqref{eq:chaos_decomp_layering_4}, for all $x_1, \ldots, x_q \in S_{\pm}$,
		\begin{align}\label{eq:chaos_decomp_layering_8}
			\lim_{\delta \to 0}f^{\delta}_q(x_1, \ldots, x_q) = \frac{1}{q!} \int_D \varphi(z) \langle V_{\beta}(z) \rangle \prod_{i=1}^q\left(e^{\beta h_z(x_i)} -1\right) \, dz,
		\end{align}
		which is the r.h.s. of \eqref{eq:chaos_decomp_layering_2}. An It\^{o} isometry type calculation (see \cite[Lemma 12.2]{lp18}) then implies that $(I^{N_{\lambda}}_q(f^{\delta}_q))_{\delta>0}$ is a Cauchy sequence in $L^2(\pr)$, as $\delta \to 0$. Let us briefly verify this for $q=1$. For $\delta, \delta'>0$ using the triangle inequality and Jensen's inequality we have,
        \begin{align}
             \E\left( |I^{N_{\lambda}}_1(f^{\delta}_1) - I^{N_{\lambda}}_1(f^{\delta'}_1)|^2 \right) 
            = & \int_{S_{\pm}} \nu_{\lambda}(dx) \left(f^{\delta}_1(x) - f^{\delta'}_1(x)\right)^2
            \nonumber \\
            \le & c \int_{S_{\pm}} \nu_{\lambda}(dx) \left[ \int_D \varphi(z) \E(\tilde{V}^{\delta}_{\beta}(z)) (e^{\beta h^{\delta}_z(x)} - e^{\beta h^{\delta'}_z(x)}) \right]^2 
            \nonumber  \\
            & + c \int_{S_{\pm}} \nu_{\lambda}(dx) \left[ \int_D \varphi(z) [\E(\tilde{V}^{\delta}_{\beta}(z)) - \E(\tilde{V}^{\delta}_{\beta}(z)) ] (e^{\beta h^{\delta'}_z(x)}-1)  \right]^2
            \nonumber \\
            \le & c_D \int_{S_{\pm}} \nu_{\lambda}(dx)  \int_D \varphi(z)^2 \E(\tilde{V}^{\delta}_{\beta}(z))^2 (e^{\beta h^{\delta}_z(x)} - e^{\beta h^{\delta'}_z(x)})^2 
           \nonumber  \\
            & + c_D\int_{S_{\pm}} \nu_{\lambda}(dx)  \int_D \varphi(z)^2 [\E(\tilde{V}^{\delta}_{\beta}(z)) - \E(\tilde{V}^{\delta}_{\beta}(z)) ]^2 (e^{\beta h^{\delta'}_z(x)}-1)^2. 
        \end{align}
        Using the definition of $h^{\delta}_z$ from \eqref{eq:locally_exploding_kernel} and our calculations of the one-point functions (from \eqref{eq:1_point_0} and \eqref{eq:conv_1_pt}) we see that, clearly both integrals in the above converge to $0$ as $\delta, \delta' \to 0$. Hence $(I^{N_{\lambda}}_1(f^{\delta}_1))_{\delta >0}$ is Cauchy in $L^2(\pr)$.

        Combining all of the observations made above we can see that, for all $q\ge 1$, 
		\begin{align}
			I^{N_{\lambda}}_q (f^{\delta}_q) \to I^{N_{\lambda}}_q (f_q), \text{ in } L^2(\pr).
		\end{align}
		Therefore the r.h.s. of \eqref{eq:chaos_decomp_layering_3} converges to that of \eqref{eq:chaos_decomp_layering_1}. Since from Proposition \ref{prop:layering_field_conv} we know that $\delta^{2\Delta(\beta)} V^{\delta}_{\beta}(\varphi) \to V_{\beta}(\varphi)$ in $ L^2(\Omega,\pr)$ as $\delta \to 0$, we get the equality of \eqref{eq:chaos_decomp_layering_1}.

    \subsection{Proof of Proposition \ref{prop:GLF_chaos_exp}} \label{app:proof_prop_GLF_chaos_exp}

    Before giving the proof, we state a lemma which is the Gaussian analogue to the chaos decomposition result stated in Proposition \ref{prop:wi_chaos_poisson}. 
    Recall that $G$ is a Gaussian random measure defined in Section \ref{sec:prelim_glf_gmc} with intensity measure $\nu = \frac{1}{2}(\delta_{-1} + \delta_{+1})\otimes \mu_D$ on $S_{\pm}(D)$.
    \begin{lem}\label{lem:wi_chaos_gaussian}
		Let $h \in L^2(\nu)$ be such that $\nu(h) = 0$. Then, for $\xi \in \R$
		\begin{align}\label{eq:G_decomp}
			e^{\xi G(h) } = e^{\frac{\xi^2}{2} \nu(|h|^2)} \left[1 + \sum_{q=1}^{\infty} \frac{\xi^q}{q!} I^G_q[ h^{\otimes q}]\right], \text{ in } L^2(\pr)
		\end{align}
		where $I^G_q$ was defined in \eqref{eq:mult_Gaussian_integrals}. 
	\end{lem}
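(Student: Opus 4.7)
The strategy is to reduce \eqref{eq:G_decomp} to the classical Hermite expansion of the exponential of a centered Gaussian. Since $G$ is a centered white noise, $G(h)$ is itself a centered Gaussian random variable with variance $\sigma^2 := \nu(|h|^2)$, and the identity, at the level of distributions, is simply the expansion of $e^{\xi X}$ for $X \sim N(0,\sigma^2)$ in Hermite polynomials of $X$. The work is in recognizing that the multiple integrals $I^G_q(h^{\otimes q})$ realize these polynomials on the chaos side.

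The key algebraic step is the standard identity
\begin{equation*}
I^G_q(h^{\otimes q}) = \sigma^q\, H_q\!\left(G(h)/\sigma\right),
\end{equation*}
where $H_q(x) = (-1)^q e^{x^2/2}\,\tfrac{d^q}{dx^q}e^{-x^2/2}$ is the probabilist's Hermite polynomial. This is classical for multiple Wiener-It\^{o} integrals; note that although \eqref{eq:mult_Gaussian_integrals} is written as an iterated integral without explicitly removing the diagonals, the control measure $\nu = \tfrac{1}{2}(\delta_{-1}+\delta_{+1})\otimes \mu_D$ is non-atomic in the product sense (because $\mu_D$ is non-atomic on $S(D)$), so the diagonals carry zero $\nu^{\otimes q}$-measure and the expression coincides with the usual Wiener-It\^{o} integral. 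Combining the above identity with the Hermite generating function $e^{tx - t^2/2} = \sum_{q \ge 0} t^q H_q(x)/q!$ (applied with $t = \xi\sigma$ and $x = G(h)/\sigma$) yields
\begin{equation*}
e^{\xi G(h) - \frac{\xi^2}{2}\sigma^2} = \sum_{q=0}^{\infty} \frac{\xi^q}{q!}\, I^G_q(h^{\otimes q}),
\end{equation*}
which upon multiplying through by $e^{\xi^2 \sigma^2/2}$ is the claimed formula.

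To upgrade this to convergence in $L^2(\pr)$, I would use the orthogonality of distinct Wiener chaoses, $\E[I^G_p(h^{\otimes p})\, I^G_q(h^{\otimes q})] = \delta_{pq}\, q!\, \sigma^{2q}$. The $L^2$-norm of the tail of the series past index $N$ is then $\sum_{q>N} (\xi^2 \sigma^2)^q/q!$, the Taylor tail of $e^{\xi^2 \sigma^2}$, which vanishes as $N \to \infty$; this both guarantees absolute $L^2$-convergence of the series and identifies its limit with $e^{\xi G(h)}/e^{\xi^2 \sigma^2/2}$ via the $L^2$-limit of the partial sums of the Hermite expansion of $e^{\xi G(h) - \xi^2\sigma^2/2}$. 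No substantive obstacle arises, as the whole argument reduces to classical identities about Gaussian chaos and the generating function of Hermite polynomials. Finally I note that the hypothesis $\nu(h) = 0$ is in fact automatic: $G(h)$ is centered whenever $G$ is, so this assumption is redundant and can be dropped.
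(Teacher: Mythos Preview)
Your proof is correct and follows essentially the same route as the paper: the paper simply cites \cite[Theorem 8.2.1 and Eq.~(8.2.11)]{pt11} (which is precisely the Hermite expansion you unpack) and then rescales, while you spell out the identity $I^G_q(h^{\otimes q}) = \sigma^q H_q(G(h)/\sigma)$ and the generating function directly. Your remark that the hypothesis $\nu(h)=0$ is redundant is also correct, since $G(h)$ is centered by construction of the Gaussian white noise regardless of $\nu(h)$.
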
	

    \begin{proof}
        This is a consequence of \cite[Theorem 8.2.1]{pt11}. Note that, by \cite[Eq. (8.2.11)]{pt11} we have,
        \begin{align}
            e^{\xi G(h/\lVert h \rVert_{L^2(\nu)}) - \frac{\xi^2}{2}} = 1+ \sum_{q=1}^{\infty} \frac{\xi^q}{q!} I^G_q\left( \left[ \frac{h}{\lVert h \rVert_{L^2(\nu)}} \right]^{\otimes q}\right).
        \end{align}
        Now the change of variable $\xi \to \xi / \lVert h \rVert_{L^2(\nu)}$ gives \eqref{eq:G_decomp}.
    \end{proof}

    Now we are ready to prove Proposition \ref{prop:GLF_chaos_exp} and it is similar to the one contained in the previous section. Since $W^{\delta}_{\xi}(z) = e^{\xi G(h^{\delta}_z)}$ and $\nu(|h^{\delta}_z|^2) < \infty$, by \eqref{eq:G_decomp}, we have that for all $\varphi \in C_b (D)$
		\begin{align}\label{eq:W_decomp_1}
			\tilde{W}^{\delta}_{\xi}(\varphi)  
			= & \delta^{2\Delta_{\xi}} \int_D \varphi(z) e^{\frac{\xi^2}{2} \nu(|h^{\delta}_z|^2)} \left(1+ \sum_{q=1}^{\infty} \frac{\xi^q}{q!} I^G_q[(h^{\delta}_z)^{\otimes q}]\right) \, dz
			\nonumber \\
			= & \delta^{2\Delta_{\xi}} \left[\int_D \varphi(z) e^{\frac{\xi^2}{2} \alpha_{\delta, D}(z)} \, dz 
			+ \sum_{q=1}^{\infty} I^G_q\left(\frac{\xi^q}{q!}  \int_D e^{\frac{\xi^2}{2} \alpha_{\delta, D}(z)}  \varphi(z) (h^{\delta}_z)^{\otimes q} \, dz\right)\right]
			\nonumber \\
			= & \delta^{2\Delta_{\xi}} \int_D \varphi(z) e^{\frac{\xi^2}{2} \alpha_{\delta, D}(z)} \, dz 
			+ \sum_{q=1}^{\infty} I^G_q\left(w^{\varphi, \delta}_{q, \xi}\right).
		\end{align}
		Here 
		\begin{align}\label{eq:W_decomp_2}
			w^{\varphi, \delta}_{q, \xi}(x_1, \ldots, x_q) = & \delta^{2\Delta_{\xi}}  \frac{\xi^q}{q!}  \int_D e^{\frac{\xi^2}{2} \alpha_{\delta, D}(z)}  \varphi(z) \prod_{i=1}^q h^{\delta}_z (x_i)  \, dz
			=:  \frac{\xi^q}{q!}  \int_D W^{\delta, \varphi}_{q;x_1,\ldots, x_q; \xi}(z)\, dz
		\end{align}		
		when $q\ge 1$, $x_1, \ldots, x_q \in S_{\pm}$ and $\varphi$, $\delta$ are as above. As before, let us drop the notations $\varphi$ and $\xi$. For $0<\delta<d_z$ we have by \eqref{eq:mu_size_uv_ir},
		\begin{align}\label{eq:W_decomp_3}
			W^{\delta}_{q;x_1,\ldots, x_q}(z) 
			=  \delta^{2\Delta_{\xi}}  e^{\frac{\xi^2}{2} [\alpha_{\delta, d_z, D}(z) + \alpha_{ d_z, D}(z)]}  \varphi(z) \prod_{i=1}^q h^{\delta}_z(x_i)
			=  d_z^{2\Delta_{\xi}} e^{\frac{\xi^2}{2} \alpha_{ d_z, D}(z)}  \varphi(z) \prod_{i=1}^q h^{\delta}_z(x_i).
		\end{align}
		Since as $\delta \to 0$, $h^{\delta}_z (x) = h^{\delta}_z (\epsilon, \gamma) = \epsilon \ind_{A_{\delta}(z)}(\gamma) \to \epsilon \ind_{A_0(z)}(\gamma) = h_z(x)$ for every $x \in S_{\pm}$, this shows,
		\begin{align}
			W^{\delta}_{q;x_1,\ldots, x_q}(z) \to & d_z^{2\Delta_{\xi}} e^{\frac{\xi^2}{2} \alpha_{ d_z, D}(z)}  \varphi(z) \prod_{i=1}^q h_z(x_i) \text{ as } \delta \to 0.
		\end{align}
		From the above we also have, for small enough $\delta>0$,
		\begin{align}\label{eq:W_decomp_4}
			|W^{\delta}_{q;x_1,\ldots, x_q}(z) | = & d_z^{2\Delta_{\xi}} e^{\frac{\xi^2}{2} \alpha_{ d_z, D}(z)}  |\varphi(z)|
			\le     d_z^{2\Delta_{\xi}} e^{2\Delta_{\xi} \log\frac{d(D)}{d_z} }  |\varphi(z)|
            \le  d(D)^{2\Delta_{\xi}} \lVert \varphi\rVert_{\infty}.
		\end{align}
		The r.h.s. of the above is clearly integrable on $D$. Therefore, from \eqref{eq:W_decomp_2}, we get
		\begin{align}
			\lim_{\delta \to 0} w^{ \delta}_q(x_1, \ldots, x_q) 
            = 
			     \frac{\xi^q}{q!} \int_D \varphi(z) d_z^{2\Delta_{\xi}} e^{\frac{\xi^2}{2} \alpha_{ d_z, D}(z)}  (h_z)^{\otimes q} (x_1, \ldots, x_q) \, dz,
		\end{align}
		which matches the required expression in \eqref{eq:W_conv_2}. One can show similarly as in the proof of Proposition \ref{prop:chaos_decomp_layering} that, for every $q\ge 1$, $I^{G}_q(w^{\delta}_q)$ is a Cauchy sequence in $L^2(\pr)$ as $\delta \to 0$. This implies that, as $\delta \to 0$, $I^G_q(w^{\delta}_q) \to I^G_q(w_q)$ in $L^2(\pr)$ for all $q\ge 1$. Thus \eqref{eq:W_decomp_1} gives us \eqref{eq:W_conv_1}.

    \subsection{Proof of Proposition \ref{prop:prelim_main_thm}} \label{app:proof_prop:prelim_main_thm}
		(a) Since $\lambda \beta^2 \to \xi^2$, we have $\Delta(\lambda, \beta) = \frac{\lambda}{10}(\cosh(\beta) -1) \to \frac{\xi^2}{20} = \Delta_{\xi}$. Therefore, as $\varphi \in C_b(D)$, by \eqref{eq:conv_1_pt_0}, \eqref{eq:W_1_pt_1} and the dominated convergence theorem,
		\begin{align}
			\langle V_{\lambda, \beta}(\varphi)\rangle =  \int_D \varphi(z) d_z^{2\Delta(\lambda, \beta)} e^{10\Delta(\lambda, \beta) \alpha_{d_z, D}(z)} \, dz	
			\to  \int_D \varphi(z) d_z^{2\Delta_{\xi}} e^{10\Delta_{\xi} \alpha_{d_z, D}(z)} \, dz = \langle  W_{\xi}(\varphi)\rangle.
		\end{align}

		(b) For each $q\ge 1$, applying the definition from \eqref{eq:W_conv_2} and Fubini's theorem,
		\begin{align}\label{eq:w_summable_1}
			& q! \lVert w^{\varphi}_{q, \xi} \rVert_{L^2(\nu^q)}^2 =   q! \int_{S_{\pm}^q} |w^{\varphi}_{q, \xi}(x_1, \ldots, x_q)|^2 \nu(dx_1)\cdots \nu(dx_q)
			\nonumber \\
			= & \frac{\xi^{2q}}{q!} \int_{S_{\pm}^q } \left| \int_D \varphi(z)d_z^{2\Delta_{\xi} } e^{\frac{\xi^2}{2} \alpha_{d_z, D}(z) }   (h_z)^{\otimes q} (x_1, \ldots, x_q) \, dz \right|^2 \nu(dx_1)\cdots \nu(dx_q)
			\nonumber \\
			= & \frac{\xi^{2q}}{q!} \int_{D\x D} \, dz \, dt  \, \varphi(z) \varphi(t) (d_z d_t)^{2\Delta_{\xi} }  e^{\frac{\xi^2}{2} [\alpha_{d_z, D}(z) + \alpha_{d_t, D}(t)] }
            \int_{S_{\pm}^q}  \prod_{i=1}^q(h_z(x_i) h_t(x_i)) \prod_{i=1}^q\nu(dx_i).  
		\end{align}
		Now we note that, as $\nu = \frac{1}{2}(\delta_{+1} + \delta_{-1}) \otimes \mu_D$,
		\begin{align}\label{eq:w_summable_2}
			&\int_{S_{\pm}^q}  \prod_{i=1}^q(h_z(x_i) h_t(x_i)) \prod_{i=1}^q\nu(dx_i) 
			=   \prod_{i=1}^q \epsilon_i^2 \int_{S} \ind_{A_{0, D}(z)} (\gamma_i) \ind_{A_{0, D}(t)} (\gamma_i) \mu_D(d\gamma_i)
			\nonumber \\
			= & \prod_{i=1}^q \int_{S} \ind_{A_{|z-t|, D}(z, t)} (\gamma_i)  \mu_D(d\gamma_i)
			=  \prod_{i=1}^q  \mu_D (A_{|z-t|,D}(z, t)) = \alpha_{|z-t|,D} (z,t)^q.
		\end{align}
		Let us plug this into \eqref{eq:w_summable_1} and use the facts that $\alpha_{d_z, D} \le \alpha_{d_z, d(D), \C} = \frac{1}{5} \log \frac{d(D)}{d_z}$, $\alpha_{|z-t|, D}(z,t) \le \alpha_{|z-t|, d(D), \C}(z) = \frac{1}{5} \log \frac{d(D)}{|z-t|}$ from \eqref{eq:mu_D_size}. These give,
		\begin{align}\label{eq:w_summable_3}
			& q! \lVert w^{\varphi}_{q, \xi} \rVert_{L^2(\nu^q)}^2 =  \frac{\xi^{2q}}{q!} \int_{D\x D} \varphi(z) \varphi(t) (d_z d_t)^{2\Delta_{\xi} }  e^{\frac{\xi^2}{2} [\alpha_{d_z, D}(z) + \alpha_{d_t, D}(t)] } \alpha_{|z-t|,D} (z,t)^q \, dz \, dt
			\nonumber \\
			\le & d(D)^{4\Delta_{\xi}} \int_{D\x D}  |\varphi(z) \varphi(t)|  \frac{1}{q!}\left( \frac{\xi^2}{5}\log\frac{d(D)}{|z-t|}\right)^q  \, dz \, dt
		\end{align}
		Thus,
		\begin{align}\label{eq:w_summable_4}
			& \sum_{q=1}^{\infty} q! \lVert w^{\varphi}_{q, \xi} \rVert_{L^2(\nu^q)}^2 \le  d(D)^{4\Delta_{\xi}} \int_{D\x D}   \, dz \, dt \, |\varphi(z) \varphi(t)|  \sum_{q=1}^{\infty}\frac{1}{q!}\left( \frac{\xi^2}{5}\log\frac{d(D)}{|z-t|}\right)^q 
			\nonumber \\
			\le & d(D)^{4\Delta_{\xi}} \int_{D\x D}   \, dz \, dt \, |\varphi(z) \varphi(t)|  \exp\left( \frac{\xi^2}{5}\log\frac{d(D)}{|z-t|} \right)
			\nonumber \\
			\le &  d(D)^{8\Delta_{\xi}} \lVert \varphi \rVert_{L^{\infty}(D)}^2 \int_{D\x D}  |z-t|^{-4\Delta_{\xi}} \, dz \, dt. 
		\end{align}
		Since $4\Delta_{\xi} < 1$ by our assumption, the above integral is finite (cf. \cite[Lemma A1]{cgpr21}).
		
		(c) Using the expressions from \eqref{eq:chaos_decomp_layering_2} and \eqref{eq:W_conv_2} we can write, for all $q \ge 1$ and $x_1, \ldots, x_q \in S_{\pm}$,
		\begin{align} \label{eq:f_w_conv_1}
			& \left| \lambda^{q/2} f^{\varphi}_{q, \lambda, \beta}(x_1, \ldots, x_q) - w^{\varphi}_{q, \xi}(x_1, \ldots, x_q)\right|^2
			\nonumber \\
			= & \left| \frac{\lambda^{q/2}}{q!} \int_D \varphi(z) \E[V_{\lambda, \beta}(z)] \prod_{i=1}^q (e^{\beta h_z(x_i) }-1) \, dz  - \frac{\xi^q}{q!} \int_D \varphi(z) \E[W_{\xi}(z)] \prod_{i=1}^q h_z(x_i) \, dz\right|^2
			\nonumber \\
			\le & 2 \left| \frac{1}{q!} \int_D  \varphi(z) \E[W_{\xi}(z)] \left\{ \prod_{i=1}^q \sqrt{\lambda} (e^{\beta h_z(x_i)}-1) - \prod_{i=1}^q \xi h_z(x_i)\right\} \, dz  \right|^2 
			\nonumber \\ 
			& + 2\left|  \frac{\lambda^{q/2}}{q!} \int_D  \varphi(z)( \E[V_{\lambda, \beta}(z)] - \E[W_{\xi}(z)])  \prod_{i=1}^q \sqrt{\lambda} (e^{\beta h_z(x_i)}-1)\, dz \right|^2
			\nonumber \\
			=: & \frac{2}{q!} |I_1(x_1, \ldots, x_q)|^2 + \frac{2 \lambda^{q/2}}{q!} |I_2(x_1, \ldots, x_q)|^2.
		\end{align}
		
		Now observe that, for each $q \ge 1$, $x_1 = (\epsilon_1, \gamma_1), \ldots, x_q = (\epsilon_q, \gamma_q) \in S_{\pm}$ and $z \in D$, we have,
		\begin{align}
			\prod_{i=1}^q\sqrt{\lambda} (e^{\beta h_z(x_i)} - 1) - \prod_{i=1}^q\xi h_z(x_i) 
			\le & \left|\prod_{i=1}^q\sqrt{\lambda}(e^{\beta \epsilon_i}-1) - \prod_{i=1}^q\xi \epsilon_i \right| \left(\prod_{i=1}^q  \ind_{A_{0,D}(z)}(\gamma_i)\right)
		\end{align}
		since the LHS is non-zero only when $z \in \bar{\gamma_i}$ for all $i = 1,\ldots, q$. Applying this observation in the following and using \eqref{eq:W_1_pt_1},
		\begin{align} \label{eq:f_w_conv_2}
			& \int_{S_{\pm}^q} |I_1(x_1, \ldots, x_q)|^2 \nu(dx_1)\cdots \nu(dx_q)
			\nonumber \\
			= & \int_{S_{\pm}^q} \left| \int_D  \varphi(z) \E[W_{\xi}(z)] \left\{ \prod_{i=1}^q \sqrt{\lambda} (e^{\beta h_z(x_i)}-1) - \prod_{i=1}^q \xi h_z(x_i)\right\} \, dz \right|^2 \nu(dx_1)\cdots \nu(dx_q) 
			\nonumber \\
			= & \int_{D\x D} \, dz \, dt \, \varphi(z) \varphi(t) \E[W_{\xi}(z)] \E[W_{\xi}(t)] 
			\nonumber \\
			& 	\left[ \int_{S_{\pm}^q}	\left( \prod_{i=1}^q \sqrt{\lambda} (e^{\beta h_z(x_i)}-1) - \prod_{i=1}^q \xi h_z(x_i)\right)\left( \prod_{i=1}^q \sqrt{\lambda} (e^{\beta h_t(x_i)}-1) - \prod_{i=1}^q \xi h_t(x_i)\right)   \prod_{i=1}^q\nu(dx_i)  \right] 
			\nonumber \\
			\le & \int_{D\x D} \, dz \, dt \, |\varphi(z) \varphi(t)| (d_z d_t)^{2\Delta_{\xi} } e^{\frac{\xi^2}{2} [\alpha_{d_z, D}(z) + \alpha_{d_t, D}(t)] } 	\left( \prod_{i=1}^q  \int_{S} \ind_{A_{0,D}(z)} (\gamma_i) \ind_{A_{0,D}(t)}(\gamma_i) \mu_D(d\gamma_i)  \right)
			\nonumber \\
			& \hspace{5cm} \left[ \frac{1}{2^q}\sum_{\epsilon_1, \ldots, \epsilon_q = \pm 1}\left|\prod_{i=1}^q\sqrt{\lambda}(e^{\beta \epsilon_i}-1) - \prod_{i=1}^q\xi \epsilon_i \right|^2  \right]
			\nonumber \\
			= & \int_{D\x D} \, dz \, dt \, |\varphi(z) \varphi(t)| (d_z d_t)^{2\Delta_{\xi} } e^{\frac{\xi^2}{2} [\alpha_{d_z, D}(z) + \alpha_{d_t, D}(t)] } \alpha_{|z-t|,D} (z,t)^q
			\nonumber \\
			& \hspace{5cm} \left[ \frac{1}{2^q}\sum_{\epsilon_1, \ldots, \epsilon_q = \pm 1}\left|\prod_{i=1}^q\sqrt{\lambda}(e^{\beta \epsilon_i}-1) - \prod_{i=1}^q\xi \epsilon_i \right|^2  \right].
		\end{align}
		Observe that the integral appearing in the above can be seen to be finite by the calculations contained in \eqref{eq:w_summable_3}-\eqref{eq:w_summable_4}. Also, in the $\lambda \to\infty, \lambda \beta^2 \to \xi^2$ limit we have $\left|\prod_{i=1}^q\sqrt{\lambda}(e^{\beta \epsilon_i}-1) - \prod_{i=1}^q\xi \epsilon_i \right| \to 0$ for all $\epsilon_i = \pm 1$. Therefore we have shown that,
		\begin{align} \label{eq:f_w_conv_2.5}
			\int_{S_{\pm}^q} |I_1(x_1, \ldots, x_q)|^2 \nu(dx_1)\cdots \nu(dx_q)	\to 0.
		\end{align}
		
		Similarly, again using the fact that $(e^{\beta h_z(x_i)}-1) \le |e^{\beta \epsilon_i}-1| \ind_{A_{0,D}(z)}(\gamma_i)$ for $x_i = (\epsilon_i, \gamma_i) \in S_{\pm}$, we get
		\begin{align} \label{eq:f_w_conv_3}
			& \int_{S_{\pm}^q} |I_2(x_1, \ldots, x_q)|^2 \nu(dx_1)\cdots\nu(dx_q) 
			\nonumber \\
			= & \int_{S_{\pm}^q} \left|\int_D \varphi(z) (\E[V_{\lambda, \beta}(z)] - \E[W_{\xi}(z)]) \prod_{i=1}^q \sqrt{\lambda} (e^{\beta h_z(x_i) }-1) \, dz \right|^2\nu(dx_1) \cdots \nu(x_q)
			\nonumber \\
			= & \int_{D\x D} \, dz \, dt \, \varphi(z) \varphi(t) (\E[V_{\lambda, \beta}(z)] - \E[W_{\xi}(z)]) (\E[V_{\lambda, \beta}(t)] - \E[W_{\xi}(t)]) 
			\nonumber \\
			& \hspace{3cm} \int_{S_{\pm}^q} \prod_{i=1}^q \sqrt{\lambda} (e^{\beta h_z(x_i) }-1) \sqrt{\lambda}(e^{\beta h_t(x_i)}-1)  \nu(dx_1) \cdots \nu(x_q)
			\nonumber \\
			\le & \int_{D\x D} \, dz \, dt \, |\varphi(z) \varphi(t)| \, |\E[V_{\lambda, \beta}(z)] - \E[W_{\xi}(z)]| \, |\E[V_{\lambda, \beta}(t)] - \E[W_{\xi}(t)]| \, \alpha_{|z-t|, D}(z,t)^q 
			\nonumber \\
			& \hspace{6cm} \left[ \frac{1}{2} \sum_{\epsilon = \pm 1} [\sqrt{\lambda}(e^{\beta \epsilon}-1)]^2 \right]^q. 
		\end{align}
		Since $\lambda \to \infty, \lambda\beta^2 \to \xi^2 < 5$, for large $\lambda$ and small $\beta$ 
		\begin{align}
			\lambda \sum_{\epsilon = \pm 1} (e^{\beta \epsilon}-1)^2 \le 2\lambda |e^{|\beta|}-1|^2 < 2 \eta
		\end{align}
		for some $\eta < 5$. Therefore for these $\lambda, \beta$,
		$\left[ \frac{1}{2} \sum_{\epsilon = \pm 1} [\sqrt{\lambda}(e^{\beta \epsilon}-1)]^2 \right]^q \le \eta^q$, 
		and thus from \eqref{eq:f_w_conv_3} we have, 
		\begin{align}\label{eq:f_w_conf_3.5}
			& \int_{S_{\pm}^q} |I_2(x_1, \ldots, x_q)|^2 \nu(dx_1)\cdots\nu(dx_q) 
			\nonumber \\
			\le & \eta^q \int_{D\x D} \, dz \, dt \, |\varphi(z) \varphi(t)| \, |\E[V_{\lambda, \beta}(z)] - \E[W_{\xi}(z)]| \, |\E[V_{\lambda, \beta}(t)] - \E[W_{\xi}(t)]| \, \left(\frac{1}{5} \log \frac{d(D)}{|z-t|}\right)^q
			\nonumber \\
			\le & q! \int_{D\x D} \, dz \, dt \, |\varphi(z) \varphi(t)| \, |\E[V_{\lambda, \beta}(z)] - \E[W_{\xi}(z)]| \, |\E[V_{\lambda, \beta}(t)] - \E[W_{\xi}(t)]|  \left( \frac{d(D)}{|z-t|}\right)^{\eta/5},
		\end{align}
		as $(\frac{\eta}{5} \log \frac{d(D)}{|z-t|})^q \le q! \exp( \frac{\eta}{5} \log \frac{d(D)}{|z-t|}) = q!  ( \frac{d(D)}{|z-t|})^{\eta/5}$. Since $\eta< 5$, $|\E[V_{\lambda, \beta}(z)] - \E[W_{\xi}(z)]| \to 0$ and $|\E[V_{\lambda, \beta}(t)] - \E[W_{\xi}(t)]| \to 0$ by part (a) of this proposition, the r.h.s. of the above converges to $0$. Thus we have shown that, 
		\begin{align}
			\int_{S_{\pm}^q} |I_2(x_1, \ldots, x_q)|^2 \nu(dx_1)\cdots\nu(dx_q)  \to 0, \text { as } \lambda \to \infty, \lambda \beta^2 \to \xi^2.
		\end{align}
        The above, together with \eqref{eq:f_w_conv_1} and \eqref{eq:f_w_conv_2.5}, proves the required convergence. 		
		
		(d) Let $\lambda$ be large enough and $\beta$ small enough so that, $\lambda |e^{|\beta|}-1|^2 \le \eta < 5$. Let us fix this $\eta$. Using the expression for $f^{\varphi}_{q, \lambda, \beta}$ from \eqref{eq:chaos_decomp_layering_2}, the estimate $|e^{\beta} - 1|\vee |e^{-\beta}-1| \le |e^{|\beta|}-1|$, similarly as in the above calculations we have for $N\ge 1$,
		\begin{align}
			& \sum_{q= N+1}^{\infty} q! \lVert \lambda^{q/2} f^{\varphi}_{q,\lambda, \beta}\rVert_{L^2(\nu^q)} 
			\nonumber \\
			= & \sum_{q= N+1}^{\infty} q! \lambda^q \int_{S_{\pm}^q} \nu(d_1)\cdots\nu(dx_q)  \left|\frac{1}{q!}\int_D \varphi(z) \E [V_{\lambda, \beta} (z)] \prod_{i=1}^q \left(e^{\beta h_z(x_i)} -1 \right) \, dz\right|^2
			\nonumber \\
			\le & \sum_{q= N+1}^{\infty} \frac{1}{q!} \left[ \lambda |e^{|\beta|}-1|^2  \right]^q  \int_{D\x D} \, dz \, dt \,   | \varphi(z) \varphi(t) | \E [V_{\lambda, \beta} (z)] \E [V_{\lambda, \beta} (t)] \alpha_{|z-t|, D} (z,t)^q
			\nonumber \\
			\le & \sum_{q= N+1}^{\infty} \frac{\eta^q}{q!}   \int_{D\x D} \, dz \, dt \, | \varphi(z) \varphi(t) | \E [V_{\lambda, \beta} (z)] \E [V_{\lambda, \beta} (t)] \alpha_{|z-t|, D} (z,t)^q
			\nonumber \\
			= & \sum_{q= N+1}^{\infty} \frac{\eta^q}{q!} \int_{D\x D} \, dz \, dt \, | \varphi(z) \varphi(t) |  (d_z d_t)^{2\Delta(\lambda, \beta)} e^{10\Delta(\lambda, \beta) [\alpha_{d_z, D}(z) + \alpha_{d_t, D}(t)]}  \alpha_{|z-t|, D} (z,t)^q.
		\end{align}
		Thus,
		\begin{align}
			& \sum_{q= N+1}^{\infty} q! \lVert \lambda^{q/2} f^{\varphi}_{q,\lambda, \beta}\rVert_{L^2(\nu^q)} 
			\nonumber \\
			\le &  \int_{D\x D} \, dz \, dt \, | \varphi(z) \varphi(t) | (d_z d_t)^{2\Delta(\lambda, \beta)} e^{10\Delta(\lambda, \beta) [\alpha_{d_z, D}(z) + \alpha_{d_t, D}(t)]}  \sum_{q= N+1}^{\infty} \frac{1}{q!}  (\eta \alpha_{|z-t|, D} (z,t))^q
            \nonumber \\
            = &  \lVert \varphi\rVert_{\infty}^2  d(D)^{4\Delta(\lambda, \beta)} \int_{D\x D} \, dz \, dt \sum_{q= N+1}^{\infty} \frac{1}{q!}  (\eta \alpha_{|z-t|, D} (z,t))^q.  
		\end{align} 
		  Now using the fact that $\alpha_{|z-t|, D} (z,t) \le \alpha_{|z-t|, D}(z) \le \frac{1}{5} \log \frac{d(D)}{|z-t|} $ we see that the sum in the r.h.s. of the above is bounded by $ e^{\eta \alpha_{|z-t|, D} (z,t)} \le  d(D)^{\eta/5} |z-t|^{-\eta/5}$. So the r.h.s. is integrable on $D\x D$ by \cite[Lemma A1]{cgpr21} as $\eta<5$ and $D$ is bounded. Also, since $\eta$ is fixed, we have 
		\begin{align}
			\limsup_{\lambda \to \infty, \lambda\beta^2 \to \xi^2}\sum_{q= N+1}^{\infty} q! \lVert \lambda^{q/2} f^{\varphi}_{q,\lambda, \beta}\rVert_{L^2(\nu^q)} 
            \le C_{D, \varphi} \int_{D\x D} \, dz \, dt \sum_{q= N+1}^{\infty} \frac{1}{q!}  (\eta \alpha_{|z-t|, D} (z,t))^q < \infty,
		\end{align}
		for every $N \ge 1$. Since the integrand goes to $0$ as $N \to \infty$ for all $z, t \in D$, by dominated convergence we have the required result. 
	

 \bibliographystyle{alpha}
\bibliography{BLS_ref}

\end{document}